\theoremstyle{plain}
\newtheorem{thm}{Theorem}[section]
\newtheorem{defn}[thm]{Definition}
\newtheorem{rem}[thm]{Remark}
\newtheorem{prop}[thm]{Proposition}
\newtheorem{lem}[thm]{Lemma}
\newtheorem*{thm*}{Theorem}
\newtheorem*{rem*}{Remark}
\title{Hopf coactions on odd spheres}
\author{Suvrajit Bhattacharjee}
\address{Stat-Math Unit\\ Indian Statistical Institute\\ 203, B.T. Road\\ Kolkata-700108}
\email{suvra.bh\_r@isical.ac.in}
\author{Debashish Goswami}
\address{Stat-Math Unit\\ Indian Statistical Institute\\ 203, B.T. Road\\ Kolkata-700108}
\email{goswamid@isical.ac.in}
\begin{document}
	
\begin{abstract}
We prove that the q-deformed unitary group, i.e., $U_q(N)$, is the universal compact quantum group in the category of (compact) quantum groups which coact on the q-deformed odd sphere $S_q^{2N-1}$ leaving the space spanned by the natural set of generators invariant and preserving the unique $SU_q(N)$ invariant functional on $S_q^{2N-1}$. Using this, we identify $U_q(N)$ as the quantum group of orientation preserving isometries (in the sense of Bhowmick and Goswami \cite{MR2555012}) for a natural spectral triple associated with $S_q^{2N-1}$ constructed by Chakraborty and Pal \cite{MR2458039}. 
\end{abstract}

\maketitle

\section{Introduction}

Symmetry is one of the most fundamental and ubiquitous concepts in mathematics and other areas of science. Classically, symmetry of some mathematical structure is understood as the group of automorphisms or isomorphisms (in a suitable sense) of the structure. Generalizing the concept of group, Drinfel\cprime d and Jimbo (\cites{MR934283, MR797001, MR841713}) introduced the notion of quantum groups. Later on, Woronowicz (\cites{MR901157, MR1616348}) formulated an analogue of this notion in the analytical framework of $C^*$-algebras.\\

It is natural to conceive of quantum groups as `symmetry objects' for certain mathematical structures, e.g., rings or $C^*$-algebras. Indeed, Manin (\cite{MR1016381}) pioneered the idea of viewing q-deformation of classical Lie groups (e.g. $SL_q(N)$) as some kind of `quantum automorphism group'.\\

A similar approach was taken by Wang (\cite{MR1637425}) who defined quantum permutation group of finite sets and quantum automorphism group of finite dimensional matrix algebras. This was followed by flurry of work by several other mathematicians including Banica, Bichon and others (\cites{MR2146039, MR2174219, MR1937403}). The second author of the present article and his collaborators (including Bhowmick, Skalski and others) approached the problem from a geometric perspective and formulated an analogue of the Riemannian isometry groups in the framework of (compact) quantum groups acting on $C^*$-algebras. We refer the reader to \cite{MR3559897} and the references therein for a comprehensive account of the theory of quantum isometry groups. See also \cite{MR3130334,MR3609348}.\\

A useful procedure of producing genuine examples of `non-commutative spaces' is to deform the coordinate algebra or some other suitable function algebra underlying a classical space. In this context, it is natural to ask the following: what is the quantum isometry group of a non-commutative space obtained by deforming a classical space? It is expected that under mild assumptions, it should be isomorphic with a deformation of the isometry group of the classical space, at least when the classical space is connected. Indeed, for a quite general class of cocycle deformation (called the Rieffel deformation), such a result has been proved by Bhowmick, Goswami and Joardar (\cite{MR3261868}). See also \cites{MR3582020, MR3611308}.\\

However, no such general result has yet been achieved for the Drinfel\cprime d-Jimbo type q-deformation of semisimple  Lie groups and the corresponding homogeneous spaces. The goal of the present paper is to make some progress in this direction. We have been able to prove the above result for q-deformed odd spheres, i.e., $S_q^{2N-1}$ (\cite{MR1086447}). Classically (for $q=1$), these are nothing but the complex $N-1$ dimensional spheres $\{(z_1,\cdots,z_N) \mid \sum_i |z_i|^2=1 \}$ inside $\mathbb{C}^N$. The universal group that can act `linearly', i.e., leaves the span of the complex coordinates $z_1,\cdots, z_N$ invariant and also preserves the canonical inner-product on $span\{z_1,\cdots,z_n\}$ coming from the standard inner-product of $\mathbb{C}^N$, is the unitary group $U(N)$.\\

We have proved in Section 6 (Theorem \ref{6.6}) a q-analogue of this result. More precisely, we have proved the following theorem.

\begin{thm*}
Let Q be a Hopf $\ast$-algebra coacting on $\mathcal{O}(S_q^{2N-1})$ by $\rho$ making it a $\ast$-comodule algebra, where we have viewed $\mathcal{O}(S_q^{2N-1})$  as a $\ast$-coideal subalgebra of $\mathcal{O}(SU_q(N))$. Moreover, suppose that
\begin{enumerate}[i)]
\item $\rho$ leaves the subspace $V=span\{z_1,\cdots, z_N\}$ invariant, i.e., $\rho(z_i)=\sum_j z_j \otimes q^j_i$ for some $q^i_j \in Q$. We write $\textbf{q}=(q^i_j)$ for the matrix of $Q$-valued coefficients;
\item $\rho$ preserves the inner product on $V$ induced by the Haar functional.
\end{enumerate} Then there is a unique $\ast$-morphism $\Psi : \mathcal{O}(U_q(N)) \rightarrow Q$ such that $(id \otimes \Psi)\rho_u=\rho$. 
\end{thm*}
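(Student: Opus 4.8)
The plan is to construct the $\ast$-morphism $\Psi$ on generators and verify that it respects the defining relations of $\mathcal{O}(U_q(N))$, the well-definedness being the crux. First I would record the two structural consequences of $\rho$ being a coaction. Writing $\rho(z_i) = \sum_j z_j \otimes q^j_i$ and applying the coaction identity $(\rho \otimes \mathrm{id})\rho = (\mathrm{id} \otimes \Delta)\rho$ together with the counit axiom to each $z_i$, and using the linear independence of the $z_j$, I obtain $\Delta(q^j_i) = \sum_k q^j_k \otimes q^k_i$ and $\varepsilon(q^j_i) = \delta^j_i$; that is, the matrix $\mathbf{q} = (q^j_i)$ is a corepresentation of $Q$. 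Since $\rho$ is a $\ast$-homomorphism, applying $\ast$ gives $\rho(z_i^*) = \sum_j z_j^* \otimes (q^j_i)^*$, so the conjugate matrix $\overline{\mathbf{q}}$ is likewise a corepresentation.

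The heart of the argument is to feed the defining relations of $\mathcal{O}(S_q^{2N-1})$ through the algebra homomorphism $\rho$ and thereby force the FRT-type relations on $\mathbf{q}$. Applying $\rho$ to each quadratic relation among the $z_i, z_i^*$ (the $q$-commutation relations), rewriting both sides in terms of a fixed ordered monomial basis of $\mathcal{O}(S_q^{2N-1})$ via those same relations, and equating the $Q$-valued coefficients of each basis monomial in the first tensor leg, yields a family of quadratic relations satisfied by the $q^i_j$ and $(q^i_j)^*$. I expect these to be exactly the commutation relations of $\mathcal{O}(U_q(N))$ read off from the $R$-matrix. Hypothesis (ii) enters here decisively: expressing the Haar-induced inner product on $V$ by its (diagonal, $q$-weighted) Gram matrix $G$, the statement that $\rho$ preserves it translates into the unitarity of the corepresentation $\mathbf{q}$ with respect to $G$, i.e., the relations $\sum_k (q^k_i)^*\, G_{kk}\, q^k_j = G_{ii}\,\delta_{ij}$ and their mates; together with the image under $\rho$ of the sphere relation $\sum_i z_i z_i^* = 1$, this supplies precisely the $\ast$-relations that distinguish $U_q(N)$ from the bare FRT bialgebra.

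Next I would address the quantum determinant, which is what lifts the target from a matrix bialgebra to the Hopf $\ast$-algebra $\mathcal{O}(U_q(N))$. Once $\mathbf{q}$ is known to be a unitary corepresentation of the Hopf $\ast$-algebra $Q$, its quantum determinant $\det_q(\mathbf{q})$ is a grouplike, unitary, central element with a two-sided inverse supplied by the antipode; invertibility of $\mathbf{q}$ itself follows from the matrix $(S(q^j_i))$ providing its inverse. I can therefore define $\Psi$ by $\Psi(u^i_j) = q^i_j$ and $\Psi(D^{\pm 1}) = \det_q(\mathbf{q})^{\pm 1}$, where $D$ denotes the grouplike generator of $\mathcal{O}(U_q(N))$. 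The relations verified above show that $\Psi$ extends to a well-defined algebra homomorphism, the computation of the previous paragraph shows it is a $\ast$-homomorphism, and comparing $\rho_u(z_i) = \sum_j z_j \otimes u^j_i$ with $\rho$ gives the intertwining identity $(\mathrm{id} \otimes \Psi)\rho_u = \rho$ on generators, hence everywhere.

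Finally, uniqueness is immediate: the intertwining identity evaluated on each $z_i$ forces $\Psi(u^j_i) = q^j_i$, and since the $u^i_j$ together with $D^{-1}$ generate $\mathcal{O}(U_q(N))$ as a $\ast$-algebra, $\Psi$ is determined. The main obstacle, and where I would concentrate the real work, is the second step: verifying that the relations extracted from a \emph{single column} of sphere coordinates genuinely reproduce the \emph{full} set of FRT relations for the $N \times N$ matrix $\mathbf{q}$, rather than some weaker quotient, and that condition (ii) supplies exactly the missing unitarity and determinant data and no spurious extra relations. Establishing this tight correspondence --- ideally by exhibiting the sphere relations as the restriction of the $\mathcal{O}(SU_q(N))$ relations to one column and tracking which FRT relations they recover --- is the crux on which well-definedness of $\Psi$ rests.
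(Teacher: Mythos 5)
Your proposal correctly identifies where the difficulty lies, but it does not resolve it, and the step you defer is precisely the one that fails if attempted the way you describe. Pushing the relations $z_iz_j=qz_jz_i$ ($i<j$) through $\rho$ and comparing coefficients in the ordered monomial basis yields only the column relations $q^k_iq^k_j=qq^k_jq^k_i$ together with, for each quadruple $k<l$, $i<j$, the \emph{single} relation $q^k_iq^l_j-qq^k_jq^l_i+q^{-1}q^l_iq^k_j-q^l_jq^k_i=0$, i.e.\ one linear combination of the two independent ``mixed'' FRT relations \eqref{eq7} and \eqref{eq8}. The relations among the $z_i^*$ are just the adjoints of these, and the mixed relations \eqref{eq17}--\eqref{eq18} involve both $\mathbf{q}$ and $\overline{\mathbf{q}}$, so it is not at all clear (and you give no argument) that the full FRT system for $\mathbf{q}$ alone is recovered; your stated expectation that these computations produce ``exactly the commutation relations of $\mathcal{O}(U_q(N))$'' is therefore unjustified as written. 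You also use hypothesis (ii) only to extract unitarity data (the $G$-unitarity of $\mathbf{q}$ and $\mathbf{q}\mathbf{q}^*=I$), whereas in fact it has to do more work than that.

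The paper closes this gap by a structural argument that avoids coefficient-chasing entirely. Since $\mu:V\otimes V\to\mathcal{O}(S_q^{2N-1})$ is a $Q$-comodule map, $\ker\mu$ is automatically a subcomodule, and a basis computation (Lemma \ref{6.7}) identifies $\ker\mu=\mathrm{im}(\hat R-qI)$; this accounts exactly for the ``half'' of the relations your computation produces. The decisive use of hypothesis (ii) is then via Proposition \ref{4.12}: because $\rho$ preserves the Haar inner product and $\hat R$ is hermitian for it (Lemma \ref{6.2}, using reality of the universal r-form), the orthogonal complement $\mathrm{im}(\hat R+q^{-1}I)$ is also a subcomodule. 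As $\hat R$ has only these two eigenspaces (Remark \ref{3.3}), $\hat R$ intertwines the coaction, and the universal property of the FRT bialgebra (Proposition \ref{3.4}) delivers \emph{all} of \eqref{eq6}--\eqref{eq8} at once. Your treatment of the quantum determinant, the $\ast$-structure and uniqueness matches Lemma \ref{5.8} and is fine, but without the eigenspace mechanism (or a completed direct verification using the $z$--$z^*$ relations and unitarity) the well-definedness of $\Psi$ is not established.
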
 

Using this, we have also identified (Theorem \ref{7.12}) $U_q(N)$ with the (orientation preserving) quantum isometry group of a natural spectral triple on $S_q^{2N-1}$ constructed in \cite{MR2458039}.

\begin{rem*}
We can compare the above result with \cite{MR3609348}*{Theorem 1.1}. In \cite{MR3609348}, the algebra $\mathcal{A}$ on which Hopf coactions are considered is commutative, which is replaced by a q-deformed quantized function algebra in the present article. Moreover, flexibility of choice of a non-degenerate bilinear form in \cite{MR3609348}*{Theorem 1.1, Condition (i)} is gone in our case; we have the somewhat rigid requirement of preserving a canonical non-degenerate sesquilinear form coming from the Haar functional. In fact, a special advantage of working with a commutative algebra in \cite{MR3609348} is that any bilinear form on $\mathcal{A}$ admits a natural extension (as a bilinear form) on $\mathcal{A} \otimes \mathcal{A}$ which is invariant under the flip map. This is no longer true for quantized function algebra, if we consider the obvious q-analogue of the flip, associated with the natural braiding.  
\end{rem*}

\section{Coquasitriangular Hopf Algebras}

In this and the following sections we introduce some well known material on cosemisimple Hopf algebras and coquasitriangular Hopf algebras. The main object of investigation, the Vaksman-Soibelman (also called quantum or odd) sphere, is also introduced.\\

Let $H$ be a Hopf algebra with comultiplication $\Delta$, counit $\epsilon$, antipode $S$, unit $1$ and multiplication $m$. We use Sweedler notation throughout, i.e., for the coproduct we write $\Delta(a)=a_1 \otimes a_2$ and for a right coaction $\rho$, we write $\rho(x)=x_0 \otimes x_1$. 

\begin{defn} \label{2.1} \cite{MR1492989}*{page 331}
A coquasitriangular Hopf algebra is a Hopf algebra $H$ equipped with a linear form $\textbf{r} : H \otimes H \rightarrow \mathbb{C}$ such that the following conditions hold:
	
\begin{enumerate}[i)]
\item $\textbf{r}$ is invertible with respect to the convolution, that is, there exists another linear form $\overline{\textbf{r}} : H \otimes H \rightarrow \mathbb{C}$ such that $\textbf{r}\overline{\textbf{r}}=\overline{\textbf{r}}\textbf{r}=\epsilon \otimes \epsilon$ on $H \otimes H$;
\item $m_{H^{op}}=\textbf{r} \ast m_H \ast \overline{\textbf{r}}$ on $H \otimes H$;
\item $\textbf{r}(m_H \otimes id)=\textbf{r}_{13}\textbf{r}_{23}$  and $\textbf{r}(id \otimes m_H)=\textbf{r}_{13}\textbf{r}_{12}$ on $H \otimes H \otimes H,$
\end{enumerate}
	
where $\textbf{r}_{12}(a \otimes b \otimes c)=\textbf{r}(a \otimes b)\epsilon(c)$, $\textbf{r}_{23}(a \otimes b \otimes c)=\epsilon(a)\textbf{r}(b \otimes c)$ and $\textbf{r}_{13}(a \otimes b \otimes c)=\epsilon(b)\textbf{r}(a \otimes c)$, $a,b,c$ in $H$.
\end{defn}

\begin{rem} \label{2.2}
A linear form $\textbf{r}$ on $H \otimes H$ with the properties $(i)-(iii)$ is called a universal r-form on $H$.
\end{rem}

Since linear forms on $H \otimes H$ correspond to bilinear forms on $H \times H$, we can consider any linear form $\textbf{r} : H \otimes H \rightarrow \mathbb{C}$ as a bilinear form on $H \times H$ and write $\textbf{r}(a,b):=\textbf{r}(a \otimes b)$, $a,b \in H$. Then the above conditions $(i)-(iii)$ read as 
\begin{equation} \label{eq1}
\textbf{r}(a_1,b_1)\overline{\textbf{r}}(a_2,b_2)=\overline{\textbf{r}}(a_1,b_1)\textbf{r}(a_2,b_2)=\epsilon(a)\epsilon(b),
\end{equation}
\begin{equation} \label{eq2}
ba=\textbf{r}(a_1,b_1)a_2b_2\overline{\textbf{r}}(a_3,b_3),
\end{equation}
\begin{equation} \label{eq3}
\textbf{r}(ab,c)=\textbf{r}(a,c_1)\textbf{r}(b,c_2),
\end{equation}
\begin{equation} \label{eq4}
\textbf{r}(a,bc)=\textbf{r}(a_1,c)\textbf{r}(a_2,b),
\end{equation} 
with $a,b,c \in H$.

\begin{rem} \label{2.3} ~\cite{MR1492989}*{page 334}
It can be shown that $\textbf{r}(S(a),S(b))=\textbf{r}(a,b)$.
\end{rem}

Let $H$ be a coquasitriangular Hopf algebra with universal r-form $\textbf{r}$. For right $H$-comodules $V$ and $W$ we define a linear mapping $\textbf{r}_{V,W} : V \otimes W \rightarrow W \otimes V$ by 
\begin{equation} \label{eq5}
\textbf{r}_{V,W}(v \otimes w)=\textbf{r}(v_1,w_1) w_0 \otimes v_0,
\end{equation} $v \in V$, $w \in W$.

\begin{rem} \label{2.4} ~\cite{MR1492989}*{page 333}
It can be shown that $\textbf{r}_{V,W}$ is an isomorphism of the right $H$-comodules $V \otimes W$ and $W \otimes V$.
\end{rem}

The compatibility of a universal r-form and a $\ast$ structure is described in the following definition.

\begin{defn} \label{2.5} ~\cite{MR1492989}*{page 336}
A universal r-form $\textbf{r}$ of a Hopf $\ast$-algebra $H$ is called real if $\textbf{r}(a \otimes b)=\overline{\textbf{r}(b^* \otimes a^*)}$.
\end{defn}

\section{The quantum semigroup $M_q(N)$}

Let $q$ be a positive real number. We now introduce some of the well known deformations of classical objects.

\begin{defn} \label{3.1} ~\citelist{\cite{MR1492989}*{page 310}\cite{MR1015339}}
The FRT bialgebra, also called the coordinate algebra of the quantum matrix space, denoted $\mathcal{O}(M_q(N))$, is the free unital $\mathbb{C}$-algebra with a set of $N^2$ generators $\{u^i_j \mid i,j=1,\cdots, N\}$ and defining relations 
\begin{equation}\label{eq6}
u^i_ku^j_k=qu^j_ku^i_k, \qquad u^k_iu^k_j=qu^k_ju^k_i, \qquad i < j,
\end{equation}
\begin{equation}\label{eq7}
u^i_lu^j_k=u^j_ku^i_l, \qquad i < j, \qquad k < l,
\end{equation}
\begin{equation}\label{eq8}
u^i_ku^j_l - u^j_lu^i_k=(q-q^{-1})u^j_ku^i_l, \qquad i < j, \qquad k < l.
\end{equation}
\end{defn}

\begin{prop}\label{3.2}
There is a unique bialgebra structure on the algebra $\mathcal{O}(M_q(N))$ such that 
\begin{equation}\label{eq9}
\Delta(u^i_j)=\sum_k u^i_k \otimes u^k_j, \qquad and \qquad \epsilon(u^i_j)=\delta_{ij}, \qquad i,j=1,\cdots,N.
\end{equation}
\end{prop}

The above construction can be realized more conceptually as follows. Let $\hat{R} : \mathbb{C}^N \otimes \mathbb{C}^N \rightarrow \mathbb{C}^N \otimes \mathbb{C}^N$ be the linear operator whose matrix with respect to the standard basis of $\mathbb{C}^N$ is given by
\begin{equation}\label{eq10}
\hat{R}^{ij}_{mn}=q^{\delta_{ij}}\delta_{in}\delta_{jm} + (q-q^{-1})\delta_{im}\delta_{jn}\theta(j-i),
\end{equation}
where $\theta$ is the Heaviside symbol, that is, $\theta(k)=1$ if $k > 0$ and $\theta(k)=0$ if $k \leq 0$. Let the inverse of $\hat{R}$ be $\hat{R}^-$. Also, let $\check{R}$ be the ``dual" operator defined by $\check{R}^{ij}_{mn}=\hat{R}^{nm}_{ji}$ and $\check{R}^{-ij}_{mn}=\hat{R}^{-ij}_{mn}$.

\begin{rem} \label{3.3} \cite{MR1492989}*{page 309}
It is known that $\hat{R}$ satisfies 
\begin{equation}\label{eq11}
(\hat{R} - qI)(\hat{R} + q^{-1}I)=0,
\end{equation} 
where $I$ is the identity operator.
\end{rem}

The following shows that $M_q(N)$ is universal in a sense.

\begin{prop} \label{3.4} \cite{MR1492989}*{page 305}
\begin{enumerate}[i)]
\item There is a linear map $\phi : \mathbb{C}^N \rightarrow \mathbb{C}^N \otimes \mathcal{O}(M_q(N))$ such that $\mathbb{C}^N$ is a right comodule of $\mathcal{O}(M_q(N))$ with coaction $\phi$ and $\hat{R}$ is a comodule morphism, i.e., $(\hat{R} \otimes id)\phi^{(2)}=\phi^{(2)}\hat{R}$, where $\phi^{(2)}$ is the induced coaction on $\mathbb{C}^N \otimes \mathbb{C}^N$ given by $\phi^{(2)}(v \otimes w)=v_0 \otimes w_0 \otimes v_1w_1$ ($\phi(v)=v_0 \otimes v_1$);
\item If $A$ is any other bialgebra and $\psi : \mathbb{C}^N \rightarrow \mathbb{C}^N \otimes A$ is a right coaction of $A$ on $\mathbb{C}^N$ such that $\hat{R}$ is a comodule morphism (in the sense described above) then there exists a unique bialgebra morphism $\Theta : \mathcal{O}(M_q(N)) \rightarrow A$ such that $(id \otimes \Theta)\phi=\psi$. 
\end{enumerate} 
\end{prop}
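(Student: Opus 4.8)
The plan is to treat both parts as instances of the universal property of a free algebra modulo relations, with the single substantive computation being the translation of the condition ``$\hat{R}$ is a comodule morphism'' into the defining relations \eqref{eq6}--\eqref{eq8}.

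For part (i), I would define the coaction on the standard basis by $\phi(e_i) = \sum_j e_j \otimes u^j_i$. That $\phi$ is a right coaction follows directly from the bialgebra structure \eqref{eq9}: coassociativity $(\phi \otimes id)\phi = (id \otimes \Delta)\phi$ reduces to $\Delta(u^j_i) = \sum_k u^j_k \otimes u^k_i$, and the counit axiom reduces to $\epsilon(u^j_i) = \delta_{ji}$. Next I would unwind the morphism condition $(\hat{R} \otimes id)\phi^{(2)} = \phi^{(2)}\hat{R}$. Writing $\hat{R}(e_m \otimes e_n) = \sum_{ij}\hat{R}^{ij}_{mn}\, e_i \otimes e_j$ and applying both sides to $e_m \otimes e_n$, comparison of the coefficient of $e_i \otimes e_j$ yields the ``RTT'' family
\[
\sum_{a,b}\hat{R}^{ij}_{ab}\, u^a_m u^b_n = \sum_{c,d}\hat{R}^{cd}_{mn}\, u^i_c u^j_d .
\]
Substituting the explicit matrix \eqref{eq10} collapses each side to two terms and leaves a case analysis in the mutual orderings of $i,j$ and of $m,n$; the resulting identities are exactly \eqref{eq6}, \eqref{eq7} and \eqref{eq8}, which hold in $\mathcal{O}(M_q(N))$ by definition.

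For part (ii), I would write the given coaction as $\psi(e_i) = \sum_j e_j \otimes a^j_i$ with $a^j_i \in A$. Running the same computation as above for $\psi$, the hypothesis that $\hat{R}$ is a comodule morphism for $\psi$ forces the elements $a^j_i$ to satisfy the very relations \eqref{eq6}--\eqref{eq8}. Since $\mathcal{O}(M_q(N))$ is the free unital algebra on the $u^i_j$ modulo precisely these relations, there is a unique \emph{algebra} homomorphism $\Theta : \mathcal{O}(M_q(N)) \to A$ with $\Theta(u^i_j) = a^i_j$. It remains to promote $\Theta$ to a bialgebra morphism and to verify the intertwining property. The coaction axioms for $\psi$ do the former for us: coassociativity $(\psi \otimes id)\psi = (id \otimes \Delta_A)\psi$ gives $\Delta_A(a^j_i) = \sum_k a^j_k \otimes a^k_i$ and the counit axiom gives $\epsilon_A(a^j_i) = \delta_{ji}$, so $\Theta$ agrees with \eqref{eq9} on generators and hence (being an algebra map, as are $\Delta$ and $\Delta_A$) is a bialgebra morphism. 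The intertwining $(id \otimes \Theta)\phi = \psi$ is immediate on the $e_i$, and uniqueness is forced because the $u^i_j$ generate $\mathcal{O}(M_q(N))$ as an algebra.

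The one place requiring genuine care is the equivalence established in part (i): that the single operator condition $(\hat{R} \otimes id)\phi^{(2)} = \phi^{(2)}\hat{R}$ encodes neither more nor less than the three families of quadratic relations. I expect this bookkeeping --- keeping the index conventions for $\hat{R}^{ij}_{mn}$ straight and checking that each ordering case reproduces one of \eqref{eq6}--\eqref{eq8} with no spurious extra relation --- to be the main, if routine, obstacle; everything else is an application of the freeness of $\mathcal{O}(M_q(N))$ together with the coaction axioms for $\psi$.
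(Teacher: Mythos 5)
The paper gives no proof of Proposition \ref{3.4}; it is quoted from Klimyk--Schm\"udgen \cite{MR1492989}, and your argument is precisely the standard one found there: unwind $(\hat{R}\otimes id)\phi^{(2)}=\phi^{(2)}\hat{R}$ into the RTT relations $\sum_{a,b}\hat{R}^{ij}_{ab}u^a_mu^b_n=\sum_{c,d}\hat{R}^{cd}_{mn}u^i_cu^j_d$, check via the explicit form \eqref{eq10} that these are equivalent to \eqref{eq6}--\eqref{eq8}, and then invoke freeness of $\mathcal{O}(M_q(N))$ on these relations for the universal property. Your outline is correct, correctly identifies the only nontrivial bookkeeping (the case analysis showing the RTT family yields exactly the three relation types and nothing more), and needs no change.
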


Since $\mathcal{O}(M_q(N))$ is only a bialgebra, we want to construct a Hopf algebra out of it. For that we need the following definition.

\begin{defn}\label{3.5} \cite{MR1492989}*{page 312}
The quantum determinant, denoted $\mathcal{D}_q$, is the element of $\mathcal{O}(M_q(N))$ defined by
\begin{equation}\label{eq12}
\sum_{\pi \in S_N} (-q)^{\ell(\pi)}u^1_{\pi(1)}\cdots u^N_{\pi(N)},
\end{equation}
where $S_N$ is the symmetric group on $N$ letters and $\ell(\pi)$ is the number of inversions in $\pi.$
\end{defn}

\begin{rem}\label{3.6}\cite{MR1492989}*{page 312, 313}
It is an important fact that $\mathcal{D}_q$ is central, nonzero and group-like in $\mathcal{O}(M_q(N))$. We recall that group-like means $\Delta(\mathcal{D}_q)=\mathcal{D}_q \otimes \mathcal{D}_q$. Applying $(\epsilon \otimes id)$ on the identity $\Delta(\mathcal{D}_q)=\mathcal{D}_q \otimes \mathcal{D}_q$ yields $\mathcal{D}_q=\epsilon(\mathcal{D}_q)\mathcal{D}_q$, hence $\epsilon(\mathcal{D}_q)=1$ as $\mathcal{D}_q\neq 0$.
\end{rem}

\section{The quantum group $SU_q(N)$}

The deformation of the special linear group is realized as follows.

\begin{defn}\label{4.1} \cite{MR1492989}*{page 314}
The coordinate algebra of the quantum special linear group is defined to be the quotient
\[\mathcal{O}(SL_q(N))=\mathcal{O}(M_q(N))/\langle \mathcal{D}_q - 1 \rangle\]
of the algebra $\mathcal{O}(M_q(N))$ by the two-sided ideal generated by the element $\mathcal{D}_q - 1.$
\end{defn}

The following shows that $\mathcal{O}(SL_q(N))$ is indeed a Hopf algebra.

\begin{prop}\label{4.2}
There is a unique Hopf algebra structure on the algebra $\mathcal{O}(SL_q(N))$ with comultiplication $\Delta$ and counit $\epsilon$ such that 
\begin{equation}\label{eq13}
\Delta(u^i_j)=\sum_k u^i_k \otimes u^k_j \qquad and \qquad \epsilon(u^i_j)=\delta_{ij}.
\end{equation}
The antipode $S$ of the Hopf algebra is given by
\begin{equation}\label{eq14}
S(u^i_j)=(-q)^{i-j}\sum_{\pi \in S_{N-1}}(-q)^{\ell(\pi)}u^{k_1}_{\pi(l_1)}\cdots u^{k_{N-1}}_{\pi(l_{N-1})},
\end{equation} 
where $\{k_1,\cdots, k_{N-1}\}:=\{1,\cdots,N\}\setminus\{j\}$ and $\{l_1,\cdots,l_{N-1}\}:=\{1,\cdots,N\}\setminus\{i\}$ as ordered sets.
\end{prop}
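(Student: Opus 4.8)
The plan is to build on Proposition \ref{3.2}, which already equips $\mathcal{O}(M_q(N))$ with a bialgebra structure, and to proceed in two stages: first descend that structure to the quotient, then construct the antipode. For the descent, I would check that the two-sided ideal $\mathcal{I}=\langle \mathcal{D}_q-1\rangle$ is in fact a bi-ideal, i.e.\ also a coideal. By Remark \ref{3.6}, $\mathcal{D}_q$ is group-like and $\epsilon(\mathcal{D}_q)=1$, so $\epsilon(\mathcal{D}_q-1)=0$ and
\[
\Delta(\mathcal{D}_q-1)=\mathcal{D}_q\otimes\mathcal{D}_q-1\otimes 1=(\mathcal{D}_q-1)\otimes \mathcal{D}_q+1\otimes(\mathcal{D}_q-1),
\]
which lies in $\mathcal{I}\otimes\mathcal{O}(M_q(N))+\mathcal{O}(M_q(N))\otimes\mathcal{I}$. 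Since $\Delta$ and $\epsilon$ are algebra maps and $\mathcal{I}$ is a two-sided ideal, the same containment propagates to every element $a(\mathcal{D}_q-1)b$, so $\mathcal{I}$ is a coideal. Hence $\Delta$ and $\epsilon$ descend to well-defined maps on $\mathcal{O}(SL_q(N))$ obeying \eqref{eq13}, and this bialgebra structure is unique because $\Delta$ and $\epsilon$ are algebra morphisms and the classes of the $u^i_j$ generate the quotient.

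The heart of the matter is the antipode. I would introduce, in $\mathcal{O}(M_q(N))$, the quantum cofactor elements $\tilde u^i_j$ defined by the right-hand side of \eqref{eq14} (read as an element of $\mathcal{O}(M_q(N))$, before imposing $\mathcal{D}_q=1$), and establish the quantum Laplace expansions
\[
\sum_k u^i_k\,\tilde u^k_j=\delta_{ij}\,\mathcal{D}_q=\sum_k \tilde u^i_k\,u^k_j .
\]
These are the key identities. Reducing modulo $\mathcal{D}_q-1$ and setting $S(u^i_j):=\tilde u^i_j$, they become
\[
\sum_k u^i_k\,S(u^k_j)=\delta_{ij}1=\sum_k S(u^i_k)\,u^k_j,
\]
which is precisely $m(\mathrm{id}\otimes S)\Delta=\eta\epsilon=m(S\otimes\mathrm{id})\Delta$ evaluated on the generators.

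To pass from generators to the whole algebra, I would first verify that $u^i_j\mapsto \tilde u^i_j$ respects the defining relations \eqref{eq6}--\eqref{eq8} as an algebra \emph{anti}-homomorphism, so that $S$ is a well-defined anti-endomorphism of $\mathcal{O}(SL_q(N))$. A routine computation then shows that the set of $h$ satisfying $S(h_1)h_2=\epsilon(h)1$ is closed under multiplication and contains the unit: indeed $S((ab)_1)(ab)_2=S(b_1)S(a_1)a_2b_2=\epsilon(a)S(b_1)b_2=\epsilon(ab)1$, using that $S$ is an anti-homomorphism and $\Delta$ a homomorphism, and symmetrically for the other identity. Since both subalgebras contain the generators, they equal all of $\mathcal{O}(SL_q(N))$, so the antipode axioms hold globally. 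Uniqueness of $S$ is automatic, as the antipode of any Hopf algebra is the unique convolution inverse of the identity.

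The genuine work lies in the quantum Laplace expansions and in the compatibility of $u^i_j\mapsto\tilde u^i_j$ with \eqref{eq6}--\eqref{eq8}. Both are intricate combinatorial identities in the $u^i_j$, requiring careful bookkeeping of the signs $(-q)^{\ell(\pi)}$ and repeated use of the commutation relations \eqref{eq6}--\eqref{eq8} (equivalently, the Hecke relation \eqref{eq11} for $\hat R$). I expect this braided determinant calculus, rather than the formal descent or the abstract extension argument, to be the main obstacle.
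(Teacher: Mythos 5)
The paper itself offers no proof of Proposition \ref{4.2}; it is recalled from Klimyk--Schm\"udgen \cite{MR1492989} (see the page references around Definition \ref{4.1} and Proposition \ref{4.5}), so there is no in-paper argument to compare against. Your outline follows the standard textbook route and most of it is correct: the computation $\Delta(\mathcal{D}_q-1)=(\mathcal{D}_q-1)\otimes\mathcal{D}_q+1\otimes(\mathcal{D}_q-1)$ together with $\epsilon(\mathcal{D}_q-1)=0$ does show that $\langle\mathcal{D}_q-1\rangle$ is a bi-ideal, so the bialgebra structure descends and is determined by \eqref{eq13}; the quantum Laplace expansions $\sum_k u^i_k\tilde u^k_j=\delta_{ij}\mathcal{D}_q=\sum_k\tilde u^i_k u^k_j$ are indeed the key identities; and the ``elements on which the antipode axiom holds form a subalgebra'' argument correctly propagates the axiom from generators to the whole algebra. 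You are also right that the real labour sits in the two combinatorial lemmas you defer (the Laplace expansions and the anti-multiplicativity of the cofactor map); these are genuinely nontrivial and occupy several pages in \cite{MR1492989}, so an outline that merely names them has not discharged them, but identifying them as the crux is fair.

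There is, however, one genuine gap: well-definedness of $S$ on the quotient. Verifying that $u^i_j\mapsto\tilde u^i_j$ respects \eqref{eq6}--\eqref{eq8} only yields an algebra anti-homomorphism out of $\mathcal{O}(M_q(N))$, i.e.\ a map $T:\mathcal{O}(M_q(N))\to\mathcal{O}(SL_q(N))^{\mathrm{op}}$; to obtain an anti-endomorphism of $\mathcal{O}(SL_q(N))$ you must in addition show that $T$ annihilates the ideal $\langle\mathcal{D}_q-1\rangle$, i.e.\ that $T(\mathcal{D}_q)=1$ in the quotient, and this does not follow from the FRT relations alone. The cheapest repair is to reorder your argument: run the subalgebra argument for the composite $\mathcal{O}(M_q(N))\to\mathcal{O}(SL_q(N))$ first, concluding that $T(a_1)\overline{a_2}=\epsilon(a)1=\overline{a_1}\,T(a_2)$ for \emph{all} $a\in\mathcal{O}(M_q(N))$ (bars denoting classes in the quotient); applying this to the group-like element $\mathcal{D}_q$ gives $T(\mathcal{D}_q)=T(\mathcal{D}_q)\cdot\overline{\mathcal{D}_q}=\epsilon(\mathcal{D}_q)1=1$, whence $T(a(\mathcal{D}_q-1)b)=T(b)\bigl(T(\mathcal{D}_q)-1\bigr)T(a)=0$ and $T$ factors through the quotient to define $S$. (Alternatively, one proves the further Laplace-type identity that the quantum determinant of the matrix of cofactors equals $\mathcal{D}_q^{N-1}$.) With that step inserted, your argument is complete modulo the deferred combinatorial identities.
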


The composite $\mathbb{C}^N \xrightarrow{\phi} \mathbb{C}^N \otimes \mathcal{O}(M_q(N)) \rightarrow \mathbb{C}^N \otimes \mathcal{O}(SL_q(N))$ gives the natural coaction of $\mathcal{O}(SL_q(N))$ on $\mathbb{C}^N$.\\

As quantum groups are understood to be quasitriangular Hopf algebras, quantum function algebras are assumed to be coquasitriangular Hopf algebras. We want to think of $\mathcal{O}(SL_q(N))$  as the quantum function algebra of $SL(N)$.

\begin{thm}\label{4.3} \cite{MR1492989}*{page 339}
$\mathcal{O}(SL_q(N))$ is a coquasitriangular Hopf algebra with universal r-form $\textbf{r}_t$ uniquely determined by
\begin{equation}\label{15}
\textbf{r}_t(u^i_j \otimes u^k_l)=t\hat{R}^{ki}_{jl},
\end{equation}
where $t$ is the unique positive real number such that $t^N=q^{-1}$.
\end{thm}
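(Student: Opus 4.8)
The plan is to establish Theorem \ref{4.3} by invoking the universality of $\mathcal{O}(M_q(N))$ (Proposition \ref{3.4}) to \emph{define} the candidate bilinear form, and then verifying that it descends to $\mathcal{O}(SL_q(N))$ and satisfies axioms (i)--(iii) of Definition \ref{2.1}. First I would fix the scalar $t>0$ with $t^N=q^{-1}$ and regard the prescription $\textbf{r}_t(u^i_j\otimes u^k_l)=t\hat{R}^{ki}_{jl}$ as data on generators. The key structural observation is that conditions (iii), i.e. equations \eqref{eq3} and \eqref{eq4}, say precisely that $\textbf{r}_t(\,\cdot\,,c)$ and $\textbf{r}_t(a,\,\cdot\,)$ are bialgebra (anti)homomorphisms in the appropriate sense; so the natural strategy is to produce $\textbf{r}_t$ itself via the universal property rather than to check multiplicativity by hand.

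The heart of the construction is to exhibit $\textbf{r}_t(\,\cdot\,,c)$, for each fixed $c$, as arising from a suitable $\mathcal{O}(M_q(N))$-comodule structure. Concretely, I would consider the coaction $\phi$ on $\mathbb{C}^N$ from Proposition \ref{3.4}(i) and the tensor-square coaction $\phi^{(2)}$, under which $\hat{R}$ is a comodule morphism by the very statement of that proposition. The operator $\hat{R}$, being a comodule morphism for $\phi^{(2)}$, intertwines the two natural braidings and therefore yields---by the universal property in Proposition \ref{3.4}(ii) applied to an auxiliary bialgebra built from $\hat R$---a well-defined bialgebra pairing whose values on generators are exactly $t\hat{R}^{ki}_{jl}$. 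The multiplicativity relations \eqref{eq3}--\eqref{eq4} then hold automatically because they are encoded in the coaction axioms and the definition of $\phi^{(2)}$; this is where the conceptual formulation of $M_q(N)$ pays off and spares us a direct index computation.

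Next I would verify convolution invertibility, condition (i)/\eqref{eq1}: using the quadratic relation \eqref{eq11}, namely $(\hat R-qI)(\hat R+q^{-1}I)=0$, one reads off that $\hat R^{-}$ is a polynomial in $\hat R$, so the inverse form $\overline{\textbf{r}_t}$ is defined on generators by $t^{-1}\check R^{-ij}_{mn}$ (equivalently by the matrix $\hat R^{-}$), and checking $\textbf{r}_t\ast\overline{\textbf{r}_t}=\epsilon\otimes\epsilon$ on generators reduces to $\hat R\hat R^{-}=I$. Condition (ii)/\eqref{eq2} amounts to showing that the defining relations \eqref{eq6}--\eqref{eq8} of $\mathcal{O}(M_q(N))$ are exactly the relations $\hat R$ imposes, so that $\textbf{r}_t$ reproduces the commutation rules; this is the classical FRT statement that the $\hat R$-matrix relations and the algebra relations coincide. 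Finally, to pass from $M_q(N)$ to $SL_q(N)$ I would check that $\textbf{r}_t$ is compatible with the quotient by $\mathcal{D}_q-1$, i.e. that $\textbf{r}_t(\mathcal{D}_q,\,\cdot\,)$ and $\textbf{r}_t(\,\cdot\,,\mathcal{D}_q)$ act as the counit; here the normalization $t^N=q^{-1}$ is forced, since evaluating the r-form on the quantum determinant produces a factor $t^N q$ that must equal $1$ for the form to descend.

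The main obstacle I anticipate is precisely this last compatibility with the quotient, namely showing that $\textbf{r}_t$ factors through $\mathcal{D}_q-1$ and pinning down the normalization constant $t$. Because $\mathcal{D}_q$ is a sum over $S_N$ of ordered products \eqref{eq12}, evaluating $\textbf{r}_t$ on it requires iterating the multiplicativity rules \eqref{eq3}--\eqref{eq4} across an $N$-fold product and collecting the resulting powers of $t$ together with the $\hat R$-matrix entries; tracking signs $(-q)^{\ell(\pi)}$ through the braiding to recover the clean value $t^N q=q^{-1}\cdot q=1$ is the delicate bookkeeping. Everything else---axioms (i), (iii) and relation (ii)---follows either from the universal property of $M_q(N)$ or from the quadratic identity \eqref{eq11}, so I expect those to be comparatively mechanical once the comodule-theoretic framing is in place. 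Uniqueness of $\textbf{r}_t$ is immediate, since a universal r-form is determined by its values on the algebra generators.
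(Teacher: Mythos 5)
The paper does not actually prove Theorem \ref{4.3}: it is quoted from Klimyk--Schm\"udgen \cite{MR1492989}*{page 339} without argument, so there is no internal proof to compare against. Judged on its own terms, your outline follows the standard route --- define $\textbf{r}_t$ on generators, extend by \eqref{eq3}--\eqref{eq4}, verify the axioms of Definition \ref{2.1}, and descend to the quotient by $\mathcal{D}_q-1$ --- and your analysis of the last step, that evaluating on the quantum determinant yields a factor $t^Nq$ which must equal $1$, is indeed the computation that pins down $t^N=q^{-1}$.

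However, there is a genuine gap at the central step: the existence of $\textbf{r}_t$ on $\mathcal{O}(M_q(N))$ in the first place. Extending the generator values via \eqref{eq3} and \eqref{eq4} gives a well-defined bilinear form only if the extension is compatible with the defining relations \eqref{eq6}--\eqref{eq8} in \emph{each} argument, and that compatibility is precisely the quantum Yang--Baxter (braid) relation $\hat{R}_{12}\hat{R}_{23}\hat{R}_{12}=\hat{R}_{23}\hat{R}_{12}\hat{R}_{23}$ for $\hat{R}$ --- a fact you never invoke and which does not appear anywhere in your argument. The attempt to get this for free from Proposition \ref{3.4} does not work: that proposition produces bialgebra morphisms out of $\mathcal{O}(M_q(N))$ intertwining coactions on $\mathbb{C}^N$, not bilinear forms on the bialgebra, and the ``auxiliary bialgebra built from $\hat R$'' through which you propose to factor is left entirely unspecified; if one tries to make it precise (say, by packaging $\textbf{r}_t(\,\cdot\,\otimes u^k_l)$ into a matrix of functionals and asking that the resulting map into the dual algebra respect the FRT relations), one is led straight back to verifying the Yang--Baxter equation. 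Relatedly, the claim that \eqref{eq3}--\eqref{eq4} ``hold automatically because they are encoded in the coaction axioms'' conflates the \emph{definition} of the extension with the \emph{verification} that it is well defined. A smaller point: the convolution inverse $\overline{\textbf{r}_t}$ is not obtained by substituting $\hat R^{-}$ on generators and extending by the same rules \eqref{eq3}--\eqref{eq4}; it obeys the reversed multiplicativity laws, so that check also requires more care than reducing to $\hat R\hat R^{-}=I$.
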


\begin{rem} \label{4.4}
It can be shown that the morphism $\textbf{r}_{\mathbb{C}^N,\mathbb{C}^N}$ induced by the universal r-form $\textbf{r}_t$ of $\mathcal{O}(SL_q(N))$, equals $t\hat{R}$. Let us denote it by $\sigma.$
\end{rem}

The following resembles complex conjugation.

\begin{prop}\label{4.5} \cite{MR1492989}*{page 316}
There is a unique $\ast$-structure on the Hopf algebra $\mathcal{O}(SL_q(N))$ given by $(u^i_j)^*=S(u^j_i)$, making it into a Hopf $\ast$-algebra.
\end{prop}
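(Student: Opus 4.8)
The plan is to define $*$ on the generators by $(u^i_j)^* = S(u^j_i)$ and to extend it to all of $\mathcal{O}(SL_q(N))$ as the unique conjugate-linear algebra anti-homomorphism determined by this assignment. Since any Hopf $*$-algebra structure is in particular a conjugate-linear algebra anti-homomorphism, and since the $u^i_j$ generate $\mathcal{O}(SL_q(N))$ as an algebra, such an extension is forced and unique; this settles uniqueness at once. It then remains to verify existence, which I would split into three independent checks: (a) the assignment descends to a well-defined map on $\mathcal{O}(SL_q(N))$, i.e. it respects the defining relations \eqref{eq6}--\eqref{eq8} and the relation $\mathcal{D}_q=1$; (b) the resulting map is involutive, $**=\mathrm{id}$; and (c) it is compatible with the coalgebra structure, i.e. $\Delta\circ * = (*\otimes *)\circ\Delta$ and $\epsilon\circ * = \overline{\epsilon(\cdot)}$.

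I would dispatch (c) first, as it is purely formal. On generators, $\Delta((u^i_j)^*)=\Delta(S(u^j_i))=\sum_k S(u^k_i)\otimes S(u^j_k)$ because $S$ is a coalgebra anti-morphism, and this equals $\sum_k (u^i_k)^*\otimes(u^k_j)^* = (*\otimes *)\Delta(u^i_j)$; likewise $\epsilon((u^i_j)^*)=\epsilon(S(u^j_i))=\delta_{ij}=\overline{\epsilon(u^i_j)}$. Since $\Delta\circ *$ and $(*\otimes *)\circ\Delta$ are both conjugate-linear algebra anti-homomorphisms into $\mathcal{O}(SL_q(N))\otimes\mathcal{O}(SL_q(N))$ (equipped with the tensor-product $*$-algebra structure) agreeing on the generators, they coincide everywhere, and similarly for the counit.

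For (a), I would reduce the check to showing that the matrix $v$ with entries $v^i_j:=S(u^j_i)$ again satisfies the relations \eqref{eq6}--\eqref{eq8}, after which anti-multiplicativity and conjugate-linearity guarantee that $*$ sends each defining relation to a consequence of these. The key inputs are that $\hat{R}$ has real entries (so conjugation of scalars is harmless) together with the symmetry $\check{R}^{ij}_{mn}=\hat{R}^{nm}_{ji}$ recorded just before Remark~\ref{3.3}: applying the anti-homomorphism $*$ to the RTT-form of the relations from Proposition~\ref{3.4} and using that $S$ is anti-multiplicative converts the $\hat{R}$-relations for $u$ into the corresponding relations for $v$. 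The determinant relation is handled using Remark~\ref{3.6}: since $\mathcal{D}_q$ is group-like, $(\mathcal{D}_q)^*=S(\mathcal{D}_q)=\mathcal{D}_q^{-1}$, which is $1$ in $\mathcal{O}(SL_q(N))$, so $(\mathcal{D}_q-1)^*$ lies in the defining ideal.

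The main obstacle is (b). Writing $J$ for the extended map, $J^2$ is a complex-linear algebra homomorphism, so it suffices to check $J^2(u^i_j)=u^i_j$ on generators. Expanding $J^2(u^i_j)=(S(u^j_i))^*$ by means of the explicit antipode formula \eqref{eq14}, anti-multiplicativity of $*$, and $(u^a_b)^*=S(u^b_a)$, one finds $J^2(u^i_j)=S(W^j_i)$, where $W^j_i$ is the cofactor-type expression obtained from \eqref{eq14} by interchanging the roles of the upper and lower (row and column) indices. The crux is then to identify $W^j_i$ with the value $S^{-1}(u^i_j)$ of the inverse (skew) antipode; granting this, $J^2(u^i_j)=S(S^{-1}(u^i_j))=u^i_j$ as required. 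This identification is the genuinely technical point: it amounts to the column-expansion formula for the quantum determinant together with $\mathcal{D}_q=1$, and is equivalent to controlling $S^2$ on the fundamental matrix. I expect this $S^2$/inverse-antipode computation, rather than any of the formal steps, to be where the real work lies.
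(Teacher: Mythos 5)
The paper does not actually prove this proposition: it is quoted from Klimyk--Schm\"udgen \cite{MR1492989}*{page 316} as a known fact, so there is no in-paper argument to compare against. Your outline is essentially the standard textbook verification, and the decomposition into uniqueness, well-definedness, involutivity and coalgebra-compatibility is the right one; part (c) as you present it is complete and correct. Two points, however, are asserted rather than established, and you should be aware that they carry the real content. First, your treatment of the determinant relation in (a) is slightly off as stated: group-likeness of $\mathcal{D}_q$ gives $S(\mathcal{D}_q)=\mathcal{D}_q^{-1}$, but it does not by itself give $(\mathcal{D}_q)^*=S(\mathcal{D}_q)$, since the Hopf $\ast$-algebra axioms are not yet available at that stage. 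What the anti-multiplicative extension actually produces is $(\mathcal{D}_q)^*=S(\mathcal{D}_q^{\mathrm{col}})$, where $\mathcal{D}_q^{\mathrm{col}}$ is the column-form $\sum_\pi(-q)^{\ell(\pi)}u^{\pi(1)}_1\cdots u^{\pi(N)}_N$ of the quantum determinant; you then need the row--column symmetry $\mathcal{D}_q^{\mathrm{col}}=\mathcal{D}_q$ to conclude. Second, the identification $W^j_i=S^{-1}(u^i_j)$ in (b), which you correctly flag as the crux, is exactly the column-expansion/cofactor formula for the skew antipode (equivalently, the identity $S^2(u^i_j)=q^{2(i-j)}u^i_j$, which for real $q$ immediately yields involutivity); it is true, and it is proved in \cite{MR1492989}, but as written your proposal reduces the proposition to it without supplying it. So the route is sound and the reductions are the correct ones, but the proof is not self-contained at precisely the two places where the quantum-determinant combinatorics enters.
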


Let us now introduce the quantum version of the real form $SU(N)$ of $SL(N)$.

\begin{defn}\label{4.6}
The coordinate algebra of the quantum special unitary group $SU_q(N)$ is the Hopf $\ast$-algebra $\mathcal{O}(SL_q(N))$ of the above proposition.
\end{defn}

\begin{thm}\label{4.7} \cite{MR1492989}*{page 340}
The universal r-form $\textbf{r}_t$ of $\mathcal{O}(SU_q(N))$ is real, in the sense of Definition \ref{2.5}.
\end{thm}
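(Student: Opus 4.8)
The plan is to exploit the uniqueness implicit in Theorem \ref{4.3}: a linear form on $H \otimes H$ that satisfies the bi-multiplicativity relations \eqref{eq3} and \eqref{eq4} is completely determined by its values on the generators $u^i_j$, because \eqref{eq3} and \eqref{eq4} together with the coproduct \eqref{eq9} reduce $\mathbf{r}(\text{monomial} \otimes \text{monomial})$ to products of scalars of the form $\mathbf{r}(u^a_b \otimes u^c_d)$. Accordingly, I would introduce the auxiliary form $R(a \otimes b) := \overline{\mathbf{r}_t(b^* \otimes a^*)}$ and prove that $R = \mathbf{r}_t$; this equality is precisely the reality condition of Definition \ref{2.5}.

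First I would check that $R$ obeys the same relations \eqref{eq3} and \eqref{eq4} as $\mathbf{r}_t$. This is a formal manipulation resting on three facts valid in the Hopf $\ast$-algebra of Proposition \ref{4.5}: the involution is an algebra anti-homomorphism, $(ab)^* = b^* a^*$; the coproduct is a $\ast$-homomorphism, so $(a^*)_1 \otimes (a^*)_2 = (a_1)^* \otimes (a_2)^*$; and complex conjugation is multiplicative. For example, $R(ab \otimes c) = \overline{\mathbf{r}_t(c^* \otimes b^* a^*)}$, and applying \eqref{eq4} for $\mathbf{r}_t$ together with the $\ast$-compatibility of $\Delta$ gives $R(a \otimes c_1)\,R(b \otimes c_2)$, which is \eqref{eq3} for $R$; the companion relation \eqref{eq4} for $R$ follows symmetrically from \eqref{eq3} for $\mathbf{r}_t$.

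Next I would verify that $R$ and $\mathbf{r}_t$ agree on the generators, which is the computational core. Using $(u^i_j)^* = S(u^j_i)$ and Remark \ref{2.3},
\[ R(u^i_j \otimes u^k_l) = \overline{\mathbf{r}_t(S(u^l_k) \otimes S(u^j_i))} = \overline{\mathbf{r}_t(u^l_k \otimes u^j_i)} = t\,\overline{\hat{R}^{jl}_{ki}}, \]
the last step being \eqref{15} with $t$ real. Since every entry in \eqref{eq10} is real (as $q > 0$), this equals $t\,\hat{R}^{jl}_{ki}$, so it remains to establish the symmetry $\hat{R}^{jl}_{ki} = \hat{R}^{ki}_{jl}$, i.e. $\hat{R}^{ab}_{cd} = \hat{R}^{cd}_{ab}$ for all indices. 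I would read this off \eqref{eq10} directly: the first summand $q^{\delta_{ab}}\delta_{ad}\delta_{bc}$ is nonzero exactly when the ordered upper pair is the reverse of the ordered lower pair, a condition symmetric under interchanging the two pairs, with matching exponent; the second summand $(q - q^{-1})\delta_{ac}\delta_{bd}\theta(b-a)$ forces $a = c$ and $b = d$, under which $\theta(b-a)$ and $\theta(d-c)$ coincide. Granting this, $R(u^i_j \otimes u^k_l) = t\,\hat{R}^{ki}_{jl} = \mathbf{r}_t(u^i_j \otimes u^k_l)$.

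Finally, since $R$ and $\mathbf{r}_t$ both satisfy \eqref{eq3} and \eqref{eq4} and agree on generators (and both send $1 \otimes b$ to $\epsilon(b)$), the determinacy noted at the outset forces $R = \mathbf{r}_t$ on all of $H \otimes H$, which is the claimed reality. I expect the main obstacle to be the bookkeeping in the generator computation, where one must correctly track how the antipode, the involution, and the index conventions of \eqref{15} and \eqref{eq10} interact so as to isolate the clean symmetry $\hat{R}^{ab}_{cd} = \hat{R}^{cd}_{ab}$; by contrast, the passage from generators to the whole algebra in the first and last steps is routine once the $\ast$-compatibility of $\Delta$ is in hand.
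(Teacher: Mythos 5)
The paper offers no proof of this statement; it is quoted from Klimyk--Schm\"udgen \cite{MR1492989}*{page 340}, and your argument is essentially the standard one found there: reduce reality to the generators via the multiplicativity relations \eqref{eq3}--\eqref{eq4} and the determinacy of a universal r-form by its values on the $u^i_j$, then use $(u^i_j)^*=S(u^j_i)$, Remark \ref{2.3}, and the fact that the matrix \eqref{eq10} is real (since $q>0$) and symmetric in the sense $\hat{R}^{ab}_{cd}=\hat{R}^{cd}_{ab}$. Your computation checks out, so this is a correct, self-contained proof of the cited result.
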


The algebraic counterpart of classical Peter-Weyl theorem for compact groups is contained in the following:

\begin{defn}\label{4.8} \cite{MR1492989}*{page 403}
A Hopf algebra $H$ is said to be cosemisimple if there exists a unique linear form $\textbf{h} : H \rightarrow \mathbb{C}$, which we call the Haar functional, such that $\textbf{h}(1)=1$, and for all $a$ in $H$
\[(id \otimes \textbf{h}) \Delta(a)=\textbf{h}(a)1,
\qquad
(\textbf{h} \otimes id) \Delta(a)=\textbf{h}(a)1.\]
\end{defn}

At this point, it is natural to recall the analytic counterpart of compactness.

\begin{defn}\label{4.9} \cite{MR1616348}
A compact quantum group (CQG) is given by a pair $(S,\Delta)$, where $S$ is a unital $C^*$ algebra and $\Delta$ is a unital $C^*$ homomorphism $\Delta : S \rightarrow S \otimes S$ ($\otimes$ is $C^*$-algebraic minimal tensor product) satisfying
\begin{enumerate}[i)]
\item $(\Delta \otimes id) \circ \Delta=(id \otimes \Delta) \circ \Delta;$
\item Each of the linear spans of $\Delta(S)(S \otimes 1)$ and $\Delta(S)(1 \otimes S)$ is norm-dense in $S \otimes S$.
\end{enumerate}
\end{defn}

If the comultiplication is understood, we simply denote the compact quantum group by $S$. It is well known (\cite{MR1616348}) that there is a canonical dense $\ast$-algebra $\mathcal{S}$ of $S$, consisting of the matrix coefficients of the finite dimensional unitary representation (to be defined in Section 7) of $S$, which becomes a Hopf $\ast$-algebra with comultiplication $\Delta|_{\mathcal{S}}$.\\

We say that the compact quantum group $(S,\Delta)$ acts on a unital $C^*$-algebra $B$, if there is a unital $C^*$-homomorphism $\alpha : B \rightarrow B \otimes S$ satisfying
\begin{enumerate}[i)]
\item $(\alpha \otimes id) \circ \alpha=(id \otimes \Delta) \circ \alpha;$
\item the linear span of $\alpha(B)(1 \otimes S)$ is norm-dense in $B \otimes S$.
\end{enumerate}

Given such an action, there exists (\cite{MR1637425}) a dense unital $\ast$-subalgebra $\mathcal{B}$ of $B$ such that $\mathcal{B}$ is a $\ast$-comodule algebra over $\mathcal{S}$.

\begin{defn}\label{4.10} \cite{MR1492989}*{page 416}
A compact quantum group (CQG) algebra is a cosemisimple Hopf $\ast$-algebra $H$ with Haar functional $\textbf{h}$ such that $\textbf{h}(a^*a)>0$, for all $a \neq 0.$
\end{defn}

It is known (\cite{MR1310296}) that $H$ is a $CQG$ algebra if and only if it is isomorphic to the dense Hopf $\ast$-algebra $\mathcal{S}$ of a compact quantum group $S$. The following captures the compactness of the real form $SU(N)$.

\begin{thm}\label{4.11} \cite{MR1492989}*{page 418}
$\mathcal{O}(SU_q(N))$ is a CQG algebra.
\end{thm}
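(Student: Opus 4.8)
The plan is to realize $\mathcal{O}(SU_q(N))$ as the linear span of the matrix coefficients of its finite-dimensional unitary corepresentations, and then to appeal to the characterization of CQG algebras in those terms. Here a matrix $v=(v^i_j)\in M_n(H)$ with $\Delta(v^i_j)=\sum_k v^i_k\otimes v^k_j$ and $\epsilon(v^i_j)=\delta_{ij}$ is a corepresentation, and it is called unitary when $v^*v=vv^*=I_n$ in $M_n(H)$, where $(v^*)^i_j=(v^j_i)^*$.

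The central observation is that the fundamental matrix $u=(u^i_j)$ is already unitary. Using the $\ast$-structure of Proposition \ref{4.5}, namely $(u^i_j)^*=S(u^j_i)$, together with the antipode axiom applied to $\Delta(u^i_j)=\sum_k u^i_k\otimes u^k_j$, one computes $\sum_k (u^k_i)^* u^k_j=\sum_k S(u^i_k)u^k_j=\epsilon(u^i_j)\,1=\delta_{ij}\,1$ and, symmetrically, $\sum_k u^i_k (u^j_k)^*=\sum_k u^i_k S(u^k_j)=\delta_{ij}\,1$, so that $u^*u=uu^*=I_N$. Next, since $\mathcal{O}(SU_q(N))$ is generated as a unital algebra by the $u^i_j$ and each adjoint $(u^i_j)^*=S(u^j_i)$ is itself a polynomial in the generators by the antipode formula of Proposition \ref{4.2}, the whole algebra is linearly spanned by monomials $u^{i_1}_{j_1}\cdots u^{i_m}_{j_m}$. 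Each such monomial is a matrix coefficient of the tensor power $u^{\otimes m}$ (with $1$ the coefficient of the trivial corepresentation), and a tensor product of unitary corepresentations is again unitary. Hence every element of $\mathcal{O}(SU_q(N))$ is a linear combination of matrix coefficients of unitary corepresentations.

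It remains to invoke the characterization that a Hopf $\ast$-algebra spanned by the matrix coefficients of its finite-dimensional unitary corepresentations is cosemisimple and has a faithful positive Haar functional, hence is a CQG algebra in the sense of Definition \ref{4.10}. I expect the main obstacle to reside entirely in this last step, i.e.\ in simultaneously producing the Haar functional $\textbf{h}$, proving its two-sided invariance (cosemisimplicity), and establishing $\textbf{h}(a^*a)>0$ for $a\neq 0$. The cleanest route is to quote the general equivalence from \cite{MR1492989}; a self-contained argument would instead prove complete reducibility by noting that the orthogonal complement of a subcorepresentation of a unitary corepresentation is again a subcorepresentation, construct $\textbf{h}$ as the canonical projection onto the trivial isotypic component, and derive strict positivity from the Schur-type orthogonality relations for inequivalent irreducible unitary corepresentations. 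The delicate point in this direct approach is the genuine unitarizability (rather than the mere existence of an invariant sesquilinear form) of every irreducible corepresentation, which is precisely where the hypothesis that $q$ is a positive real number becomes essential.
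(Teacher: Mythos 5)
The paper offers no proof of this statement, citing it directly from Klimyk--Schm\"udgen \cite{MR1492989}*{page 418}; your argument is correct and is essentially the standard proof given there, namely verifying that the fundamental matrix $u=(u^i_j)$ is unitary via $(u^i_j)^*=S(u^j_i)$ and the antipode axiom, observing that the adjoints are polynomials in the $u^i_j$ so the algebra is spanned by coefficients of the unitary corepresentations $u^{\otimes m}$, and invoking the equivalence of that spanning property with Definition \ref{4.10}. Your closing caveat about unitarizability of irreducibles is not an additional gap: once the algebra is spanned by coefficients of unitary corepresentations, complete reducibility of unitary corepresentations yields cosemisimplicity and forces every irreducible to be unitarizable, after which positivity of $\textbf{h}$ follows from the orthogonality relations.
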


In fact, there is a natural $C^*$-norm on $\mathcal{O}(SU_q(N))$. Upon completion with respect to this norm, one gets the unital $C^*$-algebra $SU_q(N)$ which is a compact quantum group in the sense of Definition \ref{4.9}. Moreover, $SU_q(N)$ is the universal $C^*$-algebra generated by $\mathcal{O}(SU_q(N))$.\\

We end this section by recalling a standard fact. Let $V$ be a finite dimensional vector space with inner-product $\langle, \rangle$. Let $H$ be a Hopf $\ast$-algebra with a coaction $\rho : V \to V \otimes H$ on $V$. We say that $\rho$ preserves the inner-product if $\langle v_0,w_0\rangle v_1^*w_1=\langle v,w\rangle 1$ for all $v,w \in V$, where we use Sweedler notation, i.e., $\rho(v)=v_0 \otimes v_1$ and likewise for $\rho(w)$.

\begin{prop}\label{4.12}\cite{MR1492989}*{page 402}
If $\rho$ preserves the inner-product on $V$ and $W$ is a subcomodule of $V$ then the orthogonal complement $W^{\perp}$ (with respect to $\langle, \rangle$) is also a subcomodule of $V$.
\end{prop}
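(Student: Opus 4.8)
The plan is to show directly that $\rho(W^{\perp}) \subseteq W^{\perp} \otimes H$. Fix an orthonormal basis $\{f_1,\dots,f_k\}$ of the subcomodule $W$. Because $W$ is a subcomodule, the restriction of $\rho$ to $W$ has the form $\rho(f_a)=\sum_{b=1}^{k} f_b \otimes s_{ba}$ with $s_{ba}\in H$, and coassociativity together with the counit axiom force $(s_{ba})$ to be a matrix corepresentation: $\Delta(s_{ca})=\sum_b s_{cb}\otimes s_{ba}$ and $\epsilon(s_{ba})=\delta_{ba}$. In particular, applying $m(\mathrm{id}\otimes S)\Delta$ to the entries yields the antipode identity $\sum_a s_{ba}S(s_{ac})=\delta_{bc}1$, which is the device that will let me ``invert'' the matrix later.

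Now take $u\in W^{\perp}$ and write $\rho(u)=u_0\otimes u_1$. For each $a$ set $X_a:=\langle f_a,u_0\rangle u_1 \in H$; since $\{f_a\}$ is orthonormal, $(P_W\otimes \mathrm{id})\rho(u)=\sum_a f_a\otimes X_a$, where $P_W$ denotes the orthogonal projection onto $W$. Thus it suffices to prove that every $X_a$ vanishes. To produce relations among the $X_a$ I would feed the pair $(f_a,u)$ into the inner-product-preservation identity $\langle v_0,w_0\rangle v_1^* w_1=\langle v,w\rangle 1$: since $f_a\perp u$, the right-hand side is $0$, and expanding the left-hand side via $\rho(f_a)=\sum_b f_b\otimes s_{ba}$ gives $\sum_b s_{ba}^* X_b=0$ for every $a$.

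The key step is to extract $X_c=0$ from these relations, and the main obstacle is that preservation produces the adjoints $s_{ba}^*$ rather than the $s_{ba}$ themselves, so the corepresentation matrix cannot be inverted on the nose. I would get around this by applying $\ast$ to the relation $\sum_b s_{ba}^* X_b=0$, obtaining $\sum_b X_b^* s_{ba}=0$ for all $a$; now the plain entries $s_{ba}$ appear and the antipode identity becomes applicable. Multiplying on the right by $S(s_{ac})$ and summing over $a$ collapses the sum via $\sum_a s_{ba}S(s_{ac})=\delta_{bc}1$, leaving $X_c^*=0$, hence $X_c=0$. Therefore $(P_W\otimes \mathrm{id})\rho(u)=0$, which is exactly the statement $\rho(u)\in W^{\perp}\otimes H$, and so $W^{\perp}$ is a subcomodule as claimed. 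Finite-dimensionality of $V$ is used only to guarantee the orthogonal decomposition $V=W\oplus W^{\perp}$ and to keep all the index sums finite.
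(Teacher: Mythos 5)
Your proof is correct. The paper does not prove this statement at all -- it simply cites Klimyk--Schm\"udgen, p.~402 -- so there is no in-paper argument to compare against; your argument (orthonormal basis of $W$, the corepresentation identities $\Delta(s_{ca})=\sum_b s_{cb}\otimes s_{ba}$ and $\sum_a s_{ba}S(s_{ac})=\delta_{bc}1$, the relation $\sum_b s_{ba}^*X_b=0$ from inner-product preservation, and the $\ast$-plus-antipode trick to invert the matrix and conclude $X_c=0$) is exactly the standard textbook proof of this fact, with every step checking out.
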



\section{The odd sphere}

We now introduce the main example to be studied. We remark that these are $q$-deformations of the complex $(N-1)$-dimensional (hence odd dimensional real) spheres. One could as well define $q$-deformations of even dimensional real spheres which are quantum homogeneous spaces of $\mathcal{O}(SO_q(n))$ for appropriate $n$. We do not deal with these because our proof of some of the main results crucially use the fact that $\hat{R}$ has two eigenvalues whereas in the case of $\mathcal{O}(SO_q(n))$, it has three.

\begin{defn}\label{5.1}\citelist{\cite{MR1682006}*{page 2} \cite{MR1086447}}
The coordinate algebra $\mathcal{O}(S_q^{2N-1})$ of the quantum sphere is the free unital $\mathbb{C}$-algebra with a set of \hspace{.09cm}2N generators $\{z_i,z^*_i \mid i=1,\cdots,N\}$ and defining relations
\begin{equation}\label{eq16}
z_iz_j=qz_jz_i, \qquad z^*_iz^*_j=q^{-1}z^*_jz^*_i, \qquad i < j
\end{equation}
\begin{equation}\label{eq17}
z_iz^*_j=qz^*_jz_i, \qquad i \neq j
\end{equation}
\begin{equation}\label{eq18}
z_iz^*_i-z^*_iz_i+q^{-1}(q-q^{-1})\sum_{k > i}z_kz^*_k=0,
\end{equation}
\begin{equation}\label{eq19}
\sum_{i=1}^{N}z_iz^*_i=1,
\end{equation}
together with the $\ast$-structure $(z_i)^*=z^*_i$ and $(z^*_i)^*=z_i.$
\end{defn}

There is a natural $C^*$-norm on $\mathcal{O}(S_q^{2N-1})$. Upon completion with respect to this norm, one gets the unital $C^*$-algebra $S_q^{2N-1}$ which is the universal $C^*$-algebra generated by $\mathcal{O}(S_q^{2N-1})$.

\begin{prop}\label{5.2}\cite{MR2458039}*{page 34}
Putting $z_i=u^1_i$ and $z^*_i=(u^1_i)^*=S(u^i_1)$ gives an embedding of $\mathcal{O}(S_q^{2N-1})$ into $\mathcal{O}(SU_q(N))$ making $\mathcal{O}(S_q^{2N-1})$ into a quantum homogeneous space for $\mathcal{O}(SU_q(N))$ with the coaction
\begin{equation}\label{eq20}
\Delta_R(z_i)=\sum_j z_j \otimes u^j_i, \qquad \Delta_R(z^*_i)=\sum_j z^*_j \otimes S(u^i_j).
\end{equation}
\end{prop}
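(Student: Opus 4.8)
The plan is to verify the claimed embedding and coaction directly, by checking that the assignments $z_i \mapsto u^1_i$ and $z^*_i \mapsto S(u^i_1)$ are compatible with the defining relations of $\mathcal{O}(S_q^{2N-1})$ listed in Definition \ref{5.1}, and that the resulting map is injective. First I would establish that $\Delta_R$ as defined in \eqref{eq20} is genuinely the restriction of the comultiplication $\Delta$ of $\mathcal{O}(SU_q(N))$ to the image subalgebra. Indeed, since $z_i = u^1_i$, we have $\Delta(u^1_i)=\sum_j u^1_j \otimes u^j_i = \sum_j z_j \otimes u^j_i$, which is exactly the first formula; for $z^*_i = S(u^i_1)$ one uses that $S$ is an anti-coalgebra morphism, so that $\Delta(S(u^i_1)) = (S \otimes S)\Delta^{op}(u^i_1) = \sum_j S(u^j_1)\otimes S(u^i_j) = \sum_j z^*_j \otimes S(u^i_j)$, matching the second formula. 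This simultaneously shows that the subalgebra generated by the $z_i, z^*_i$ is a right coideal, i.e.\ $\Delta$ restricts to a coaction landing in (subalgebra)$\,\otimes\,\mathcal{O}(SU_q(N))$.

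The heart of the argument is checking the relations. The plan is to feed the definitions $z_i = u^1_i$, $z^*_i = S(u^i_1)$ into each of \eqref{eq16}--\eqref{eq19} and reduce everything to identities among the $u^i_j$ that follow from the FRT relations \eqref{eq6}--\eqref{eq8}, the coquasitriangular structure of Theorem \ref{4.3}, and the explicit antipode formula \eqref{eq14}. The relations \eqref{eq16} for the $z_i$ alone come immediately from \eqref{eq6} (with $i=j=1$ in the upper index of $u^1_k u^1_l$), and by applying the $\ast$-operation (which is an antilinear anti-automorphism with $(u^1_i)^*=z^*_i$) one obtains the corresponding relations \eqref{eq16} for the $z^*_i$. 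The cross relations \eqref{eq17} and the crucial relation \eqref{eq18} require manipulating products $u^1_i S(u^j_1)$; the cleanest route is to use the commutation relations between the matrix entries and their antipodes, which are encoded in the universal $r$-form via \eqref{eq2}, together with the quadratic relation \eqref{eq11} for $\hat R$ which guarantees that $\hat R$ has only the two eigenvalues $q$ and $-q^{-1}$. Finally, \eqref{eq19}, namely $\sum_i z_i z^*_i = \sum_i u^1_i S(u^i_1)=1$, is precisely the statement that $S$ is a right convolution inverse of the identity applied to the first row, i.e.\ $\sum_i u^1_i S(u^i_1) = \epsilon(u^1_\cdot)1$ summed appropriately, which is the antipode axiom $m(id \otimes S)\Delta = \epsilon(\cdot)1$ evaluated on $u^1_1$.

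The main obstacle I anticipate is relation \eqref{eq18}, which mixes $z_i z^*_i$, $z^*_i z_i$, and the sum $\sum_{k>i} z_k z^*_k$ with the specific coefficient $q^{-1}(q-q^{-1})$; getting this exact coefficient out requires careful bookkeeping of the Heaviside term $\theta(j-i)$ in the $\hat R$-matrix \eqref{eq10} and the normalization $t^N = q^{-1}$ from Theorem \ref{4.3}. Rather than grind through the braiding computation, I would most likely cite the known commutation relations for the first row and column of $\mathcal{O}(SU_q(N))$ against their antipodes (these are standard and appear in the literature on Vaksman--Soibelman spheres, e.g.\ \cite{MR1086447}), and verify that they reproduce \eqref{eq16}--\eqref{eq19} upon the substitution. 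Once all relations are confirmed, the universal property of $\mathcal{O}(S_q^{2N-1})$ as the free algebra modulo these relations gives a well-defined $\ast$-algebra homomorphism into $\mathcal{O}(SU_q(N))$; injectivity (hence that it is an embedding) then follows from the known faithfulness of the corresponding $C^*$-representations, or alternatively from comparing the two algebras as $SU_q(N)$-comodules and matching multiplicities, which simultaneously yields the quantum homogeneous space structure.
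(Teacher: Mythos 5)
The paper does not prove this proposition at all: it is imported verbatim from Chakraborty--Pal \cite{MR2458039}*{page 34} with only a citation, so there is no in-paper argument to compare against. Your outline is the standard direct verification and its skeleton is sound: the coaction formulas do follow from $\Delta(u^1_i)=\sum_j u^1_j\otimes u^j_i$ and from $S$ being an anti-coalgebra map; the relations \eqref{eq16} for the $z_i$ are the $k=1$ case of \eqref{eq6} and pass to the $z^*_i$ under $\ast$ since $q$ is real; \eqref{eq19} is exactly the antipode axiom $\sum_k u^1_kS(u^k_1)=\epsilon(u^1_1)1$; and the compatibility of $z_i^*=S(u^i_1)$ with the $\ast$-structure is precisely Proposition \ref{4.5}. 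In effect your ``cite the known commutation relations for the first row against their antipodes'' move puts you in the same position as the paper, which also outsources the computation.

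Two points deserve more care than your sketch gives them. First, the cross-relations \eqref{eq17} and especially \eqref{eq18}, with the coefficient $q^{-1}(q-q^{-1})$ and the tail sum over $k>i$, are the entire content of the claim that the assignment is well defined; invoking \eqref{eq2} and the eigenvalue structure of $\hat R$ is the right idea, but as written it is a pointer rather than a derivation, and the normalization $t^N=q^{-1}$ actually drops out of these relations (only the ratio of r-form values matters), so the bookkeeping is slightly different from what you anticipate. Second, injectivity is the step most at risk of circularity: ``faithfulness of the corresponding $C^*$-representations'' presupposes that the abstract algebra of Definition \ref{5.1} and the subalgebra of $\mathcal{O}(SU_q(N))$ generated by $u^1_i, S(u^i_1)$ have already been identified. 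The honest route is the one you mention second: use the linear basis of Proposition \ref{6.5} (Bergman's diamond lemma) for the abstract algebra, and match it against the Peter--Weyl decomposition of the image inside $\mathcal{O}(SU_q(N))$ as in Proposition \ref{7.2}, checking that the multiplicities agree. That comparison is what simultaneously establishes the embedding and the quantum homogeneous space structure, and it is the part of the argument that should not be waved through.
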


The compact quantum group $SU_q(N)$ acts on $S_q^{2N-1}$ in the $C^*$-algebraic sense, lifting the above coaction on $\mathcal{O}(S_q^{2N-1})$. Classically, the sphere $S^{2N-1}$ can be described as a homogeneous space for $SU(N)$. The above proposition states the quantum version of it. Moreover, one can view the sphere as a homogeneous space for $U(N)$ also. As expected, the last statement continues to hold in the quantum world too. We need the following definition.

\begin{defn}\label{5.3}\cite{MR1492989}*{page 313}
The coordinate algebra of the quantum general linear group is defined to be the quotient
\[\mathcal{O}(GL_q(N))=\mathcal{O}(M_q(N))[t]/\langle t\mathcal{D}_q - 1 \rangle\]
of the polynomial algebra $\mathcal{O}(M_q(N))[t]$ in $t$ over $\mathcal{O}(M_q(N))$  by the two-sided ideal generated by the element $t\mathcal{D}_q - 1.$
\end{defn}

\begin{prop}\label{5.4}
There is a unique Hopf algebra structure on the algebra $\mathcal{O}(GL_q(N))$ with comultiplication $\Delta$ and counit $\epsilon$ such that 
\begin{equation}\label{eq21}
\Delta(u^i_j)=\sum_k u^i_k \otimes u^k_j \qquad and \qquad \epsilon(u^i_j)=\delta_{ij}.
\end{equation}
The antipode $S$ of the Hopf algebra is given by	\begin{equation}\label{eq22}	S(u^i_j)=(-q)^{i-j}{\mathcal{D}_q}^{-1}\sum_{\pi \in S_{N-1}}(-q)^{\ell(\pi)}u^{k_1}_{\pi(l_1)}\cdots u^{k_{N-1}}_{\pi(l_{N-1})} \quad \text{and} \quad S(\mathcal{D}_q)={\mathcal{D}_q}^{-1},
\end{equation} 	where $\{k_1,\cdots, k_{N-1}\}:=\{1,\cdots,N\}\setminus\{j\}$ and $\{l_1,\cdots,l_{N-1}\}:=\{1,\cdots,N\}\setminus\{i\}$ as ordered sets.
\end{prop}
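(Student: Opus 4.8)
The plan is to build the structure in two stages: first localise the bialgebra $\mathcal{O}(M_q(N))$ at $\mathcal{D}_q$ to get the bialgebra, then construct the antipode from a quantum cofactor matrix. For the first stage I would extend the bialgebra structure of Proposition \ref{3.2} to the polynomial ring $\mathcal{O}(M_q(N))[t]$ by declaring $t$ group-like, i.e.\ $\Delta(t)=t\otimes t$ and $\epsilon(t)=1$. Since $t$ is a free central variable, these prescriptions extend uniquely to an algebra homomorphism $\Delta$ and a character $\epsilon$ on $\mathcal{O}(M_q(N))[t]$, and coassociativity and the counit axiom are inherited because they already hold on the $u^i_j$ and hold trivially on the group-like $t$. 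I would then check that $\langle t\mathcal{D}_q-1\rangle$ is a bi-ideal: it is a two-sided ideal by definition, and since $\mathcal{D}_q$ is group-like with $\epsilon(\mathcal{D}_q)=1$ (Remark \ref{3.6}) one computes
\[
\Delta(t\mathcal{D}_q-1)=(t\mathcal{D}_q-1)\otimes t\mathcal{D}_q+1\otimes(t\mathcal{D}_q-1),\qquad \epsilon(t\mathcal{D}_q-1)=0,
\]
so the generator of the ideal satisfies the coideal condition. Hence the quotient $\mathcal{O}(GL_q(N))$ inherits a unique bialgebra structure realizing \eqref{eq21}, and in it $t=\mathcal{D}_q^{-1}$ is a two-sided inverse of the central group-like element $\mathcal{D}_q$, itself group-like.

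For the antipode I would introduce the quantum cofactor (adjugate) element
\[
\tilde A^i_j=(-q)^{i-j}\sum_{\pi\in S_{N-1}}(-q)^{\ell(\pi)}u^{k_1}_{\pi(l_1)}\cdots u^{k_{N-1}}_{\pi(l_{N-1})}\in\mathcal{O}(M_q(N)),
\]
with the ordered index sets as in Proposition \ref{4.2}, and define $S$ on generators by $S(u^i_j)=\mathcal{D}_q^{-1}\tilde A^i_j$ and $S(\mathcal{D}_q)=\mathcal{D}_q^{-1}$ (equivalently $S(t)=\mathcal{D}_q$), extending $S$ to all of $\mathcal{O}(GL_q(N))$ as an algebra anti-homomorphism. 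Well-definedness of this extension amounts to checking that the anti-multiplicative prescription annihilates the FRT relations \eqref{eq6}--\eqref{eq8} and sends $t\mathcal{D}_q-1$ to $0$, which I would derive from the quantum minor combinatorics underlying $\mathcal{D}_q$ together with those relations.

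The crux is the quantum Laplace expansion in $\mathcal{O}(M_q(N))$,
\[
\sum_k u^i_k\tilde A^k_j=\delta_{ij}\mathcal{D}_q=\sum_k \tilde A^i_k u^k_j,
\]
which I would establish from the sign-and-$q$-weight combinatorics of quantum minors and the defining relations. Granting this, the antipode axioms on generators are immediate: using centrality of $\mathcal{D}_q$,
\[
\sum_k u^i_k S(u^k_j)=\mathcal{D}_q^{-1}\sum_k u^i_k\tilde A^k_j=\delta_{ij}1=\mathcal{D}_q^{-1}\sum_k \tilde A^i_k u^k_j=\sum_k S(u^i_k)u^k_j,
\]
while $S(\mathcal{D}_q)\mathcal{D}_q=\mathcal{D}_q\,S(\mathcal{D}_q)=1=\epsilon(\mathcal{D}_q)1$. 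Since the locus on which the antipode identity holds is a subalgebra (because $S$ is anti-multiplicative, $\Delta$ multiplicative and $\epsilon$ a character), verification on the algebra generators $u^i_j$ and $\mathcal{D}_q$ suffices. Uniqueness is automatic: $\Delta$ and $\epsilon$ are forced on the $u^i_j$ by \eqref{eq21} and on $t$ by $t\mathcal{D}_q=1$, and the antipode of any bialgebra, being the convolution inverse of the identity, is unique whenever it exists.

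I expect the genuine obstacle to be the Laplace expansion identity together with the attendant well-definedness of $S$ as an anti-homomorphism: this is the one point where the noncommutative relations \eqref{eq6}--\eqref{eq8} and the combinatorics of the quantum determinant must be reconciled, whereas everything else is formal manipulation of group-like elements and of the central localization.
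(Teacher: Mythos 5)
The paper gives no proof of Proposition \ref{5.4}; it is quoted as standard, with the construction imported from Klimyk--Schm\"udgen (the reference cited at Definition \ref{5.3}). Your two-stage argument --- extend $\Delta,\epsilon$ to $\mathcal{O}(M_q(N))[t]$ with $t$ group-like, check that $t\mathcal{D}_q-1$ generates a bi-ideal using $\epsilon(\mathcal{D}_q)=1$ and group-likeness of $\mathcal{D}_q$, then define $S$ anti-multiplicatively via the quantum adjugate and verify the antipode axiom on generators only (legitimate, since the set where $S(a_1)a_2=\epsilon(a)1=a_1S(a_2)$ holds is a subalgebra when $S$ is anti-multiplicative) --- is exactly that standard route, and every formal step you write out is correct, including the uniqueness argument via the convolution inverse. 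The only substance you defer rather than prove is the quantum Laplace expansion $\sum_k u^i_k\tilde A^k_j=\delta_{ij}\mathcal{D}_q=\sum_k\tilde A^i_ku^k_j$ and the compatibility of the anti-multiplicative prescription for $S$ with the FRT relations \eqref{eq6}--\eqref{eq8}; you correctly identify these as the genuine computational content, and they are precisely what the cited reference supplies, so there is no gap in the architecture of the argument.
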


We have the following analogue of the real form $U(N)$.

\begin{prop}\label{5.5} \cite{MR1492989}*{page 316}
There is a unique $\ast$-structure on the Hopf algebra $\mathcal{O}(GL_q(N))$ given by $(u^i_j)^*=S(u^j_i)$, making it into a Hopf $\ast$-algebra.
\end{prop}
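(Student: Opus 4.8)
The plan is to construct the $\ast$-operation explicitly as a composition of three maps whose well-definedness is transparent, and then to check involutivity, the Hopf $\ast$-algebra axioms, and uniqueness. This parallels the treatment of $\mathcal{O}(SL_q(N))$ in Proposition \ref{4.5}, the only genuinely new feature being the invertible generator $t=\mathcal{D}_q^{-1}$, which must be shown to behave like a phase, i.e. $t^\ast=t^{-1}$.

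First I would introduce two auxiliary maps. Let $C$ be the conjugate-linear algebra automorphism of $\mathcal{O}(GL_q(N))$ fixing every generator $u^i_j$ and $t$; this is well defined because $q>0$ makes all structure constants in \eqref{eq6}--\eqref{eq8}, in $\mathcal{D}_q$, and in $t\mathcal{D}_q-1$ real. Let $T$ be the complex-linear transpose determined by $T(u^i_j)=u^j_i$. The point here is that $T$ descends to an algebra automorphism: a direct check shows that the homomorphic image under $T$ of each relation \eqref{eq6}--\eqref{eq8} is again a consequence of \eqref{eq6}--\eqref{eq8} (for instance the transpose of \eqref{eq8} reduces to \eqref{eq8} after commuting the two ``anti-diagonal'' factors by \eqref{eq7}). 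Since the row- and column-expansions of the quantum determinant coincide, $T(\mathcal{D}_q)=\mathcal{D}_q$, so $T$ extends to the localization with $T(t)=t$. I would then \emph{define} $\ast:=C\circ S\circ T$, with $S$ the antipode of Proposition \ref{5.4}. As a composite of a conjugate-linear automorphism, an anti-automorphism, and an automorphism, $\ast$ is automatically a conjugate-linear algebra anti-automorphism, and on generators it gives $(u^i_j)^\ast=C(S(u^j_i))=S(u^j_i)$ and, using $S(\mathcal{D}_q)=\mathcal{D}_q^{-1}$, also $\mathcal{D}_q^\ast=\mathcal{D}_q^{-1}$ and $t^\ast=\mathcal{D}_q$. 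No separate verification that $\ast$ respects the defining relations is then required.

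The crux is involutivity. Because $S$ and $T$ have real structure constants, $C$ commutes with each of them and $C^2=\mathrm{id}$, so $\ast^2=(ST)^2$, and it suffices to prove $(ST)^2=\mathrm{id}$. I would deduce this conceptually rather than by brute force: checking on generators that $\Delta\circ T=(T\otimes T)\circ\Delta^{\mathrm{op}}$ shows that $T$ is a bialgebra isomorphism from $\mathcal{O}(GL_q(N))$ onto $\mathcal{O}(GL_q(N))^{\mathrm{cop}}$; since bialgebra isomorphisms intertwine antipodes and the antipode of $H^{\mathrm{cop}}$ is $S^{-1}$, one gets $TS=S^{-1}T$, whence $STST=S(S^{-1}T)T=\mathrm{id}$. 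This is also where I would guard against the tempting but false shortcut ``$\ast^2=S^2$'', which fails because $(u^{t})^{-1}\neq(u^{-1})^{t}$ for quantum matrices.

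Finally I would record the remaining Hopf $\ast$-algebra compatibilities and uniqueness. That $\Delta$ is a $\ast$-homomorphism follows on generators from $\Delta(S(u^j_i))=(S\otimes S)\Delta^{\mathrm{op}}(u^j_i)=\sum_k S(u^k_i)\otimes S(u^j_k)=(\ast\otimes\ast)\Delta(u^i_j)$, and as both sides are conjugate-linear anti-homomorphisms they agree everywhere; the identity $\epsilon(a^\ast)=\overline{\epsilon(a)}$ follows from $\epsilon\circ S=\epsilon$ and $\epsilon\circ T=\epsilon$, and the antipode compatibility is then automatic for a Hopf $\ast$-algebra. Uniqueness is immediate, since a $\ast$-structure is determined by its values on a generating set and those values are prescribed. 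I expect the main obstacle to be organizational: verifying that $T$ is well defined (the relation \eqref{eq8} computation) and keeping careful track of the determinant, while the potentially delicate involutivity is rendered routine by the $H^{\mathrm{cop}}$ argument.
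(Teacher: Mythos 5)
The paper does not prove this statement at all: it is quoted verbatim from Klimyk--Schm\"udgen (the citation to page 316 is the ``proof''), so there is nothing in the text to compare your argument against line by line. Taken on its own merits, your proof is correct and well organized. The factorization $\ast=C\circ S\circ T$ is a clean way to shift the entire well-definedness burden onto the transpose map $T$, and your verification that $T$ preserves \eqref{eq6}--\eqref{eq8} (with the image of \eqref{eq8} recovered from \eqref{eq8} and \eqref{eq7}) and fixes $\mathcal{D}_q$ is the right computation; the $H^{\mathrm{cop}}$ argument for $(ST)^2=\mathrm{id}$ is also sound, and your warning that $\ast^2\neq S^2$ is well taken (for $N=2$ one checks directly that $S^2(u^1_2)=q^{-2}u^1_2$, so the shortcut genuinely fails). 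Two small points deserve a sentence each in a final write-up. First, the identity ``the antipode of $H^{\mathrm{cop}}$ is $S^{-1}$'' presupposes that $S$ is bijective; here that is automatic, since $T$ transports the Hopf structure of $H$ onto $H^{\mathrm{cop}}$, which forces $S$ to be invertible (alternatively, invertibility follows from coquasitriangularity or cosemisimplicity of $\mathcal{O}(GL_q(N))$), but the logical order should be made explicit. Second, for uniqueness the value of $\ast$ on the extra generator $t=\mathcal{D}_q^{-1}$ is not literally prescribed by the statement; you should note that $\mathcal{D}_q^{\ast}$ is already determined by the values on the $u^i_j$, and hence so is $t^{\ast}=(\mathcal{D}_q^{\ast})^{-1}$, which closes the gap.
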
  

\begin{defn}\label{5.6}
The coordinate algebra of the quantum unitary group $U_q(N)$ is the Hopf $\ast$-algebra $\mathcal{O}(GL_q(N))$ of the above proposition. In this $\ast$-algebra, the quantum determinant $\mathcal{D}_q$ becomes a unitary element.
\end{defn}

In analogy with $U(N)$, the fact that $U_q(N)$ is a compact quantum group is reflected in the following theorem.

\begin{thm}\label{5.7} \cite{MR1492989}*{page 418}
$\mathcal{O}(U_q(N))$ is a CQG algebra.
\end{thm}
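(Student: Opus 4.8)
The plan is to realize $\mathcal{O}(U_q(N))$ as a Hopf $\ast$-subalgebra of the tensor product $\mathcal{O}(SU_q(N)) \otimes \mathcal{O}(U(1))$, where $\mathcal{O}(U(1)) = \mathbb{C}[z,z^{-1}]$ is the commutative Hopf $\ast$-algebra of the circle ($\Delta z = z \otimes z$, $\epsilon(z)=1$, $S(z)=z^{-1}=z^*$), and then to invoke two standard permanence properties of CQG algebras. First, the tensor product of two CQG algebras is again a CQG algebra, with Haar functional the tensor product $\textbf{h}_{SU} \otimes \textbf{h}_{U(1)}$ and faithful positivity inherited multiplicatively; here $\mathcal{O}(SU_q(N))$ is a CQG algebra by Theorem \ref{4.11}, while $\mathbb{C}[z,z^{-1}]$ is one because it is the function algebra of the compact group $U(1)$ (its Haar functional $\textbf{h}(z^n)=\delta_{n,0}$ is manifestly faithful and positive). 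Second, a Hopf $\ast$-subalgebra $H$ of a CQG algebra $K$ is itself a CQG algebra: since $\Delta(H) \subseteq H \otimes H$, the restriction to $H$ of the Haar functional of $K$ is bi-invariant and normalized on $H$, hence is its Haar functional and witnesses cosemisimplicity in the sense of Definition \ref{4.8}, while faithful positivity $\textbf{h}(a^*a)>0$ for $0 \neq a \in H$ is immediate from that for $K$. Granting these, it remains only to produce the embedding.

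Next I would define the candidate map. Let $\pi : \mathcal{O}(M_q(N)) \to \mathcal{O}(SU_q(N))$ be the canonical quotient by $\langle \mathcal{D}_q - 1 \rangle$, and set
\[
\Phi(u^i_j) = \pi(u^i_j) \otimes z .
\]
Since the defining relations (\ref{eq6})--(\ref{eq8}) are homogeneous of degree two and $z$ is central, they are preserved; moreover $\Phi(\mathcal{D}_q) = \pi(\mathcal{D}_q) \otimes z^N = 1 \otimes z^N$ is invertible, so $\Phi$ extends to the localization $\mathcal{O}(GL_q(N)) = \mathcal{O}(M_q(N))[\mathcal{D}_q^{-1}]$ with $\Phi(\mathcal{D}_q^{-1}) = 1 \otimes z^{-N}$. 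Comparison of the coproduct (\ref{eq9}) with that of the tensor product shows $\Phi$ is a bialgebra morphism, and using the explicit antipode (\ref{eq22}) together with $z^*=z^{-1}$ one checks that $\Phi((u^i_j)^*) = \Phi(S(u^j_i)) = \Phi(u^i_j)^*$, so $\Phi$ is a morphism of Hopf $\ast$-algebras.

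The crux is injectivity of $\Phi$, and this is the step I expect to be the main obstacle. I would exploit the $\mathbb{N}$-grading on $\mathcal{O}(M_q(N))$ in which each $u^i_j$ has degree one (legitimate because the relations are homogeneous); then $\Phi$ carries the degree-$m$ component into $\mathcal{O}(SU_q(N)) \otimes z^m$, so $\Phi$ is injective on $\mathcal{O}(M_q(N))$ as soon as $\langle \mathcal{D}_q - 1\rangle = \ker\pi$ contains no nonzero homogeneous element. To see the latter, suppose $a=(\mathcal{D}_q-1)b$ is homogeneous of degree $m$, expand $b=\sum_e b_e$ into homogeneous pieces, and compare degrees in $a = \mathcal{D}_q b - b$: this forces $b_e = \mathcal{D}_q b_{e-N}$ for every $e \neq m$. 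Reading this from the lowest nonzero degree pins the bottom piece of $b$ to degree $m$ with $a=-b_m$, whence $b_{m+jN} = \mathcal{D}_q^{\,j} b_m = -\mathcal{D}_q^{\,j} a$ for all $j \geq 0$; since $\mathcal{D}_q$ is a non-zero-divisor (as $\mathcal{O}(M_q(N))$ is a domain), a nonzero $a$ would give $b$ infinitely many nonzero components, which is absurd. Hence $a=0$, $\Phi$ is injective on $\mathcal{O}(M_q(N))$, and passing to the localization (exact for the central non-zero-divisor $\mathcal{D}_q$) yields injectivity of $\Phi$ on $\mathcal{O}(U_q(N))$. This exhibits $\mathcal{O}(U_q(N))$ as a Hopf $\ast$-subalgebra of $\mathcal{O}(SU_q(N)) \otimes \mathbb{C}[z,z^{-1}]$, and the two permanence properties above then finish the proof. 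Everything outside the injectivity argument is a routine verification of the morphism axioms; the single external input I would need to cite is that $\mathcal{O}(M_q(N))$ is a domain, so that $\mathcal{D}_q$ is regular.
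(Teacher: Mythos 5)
Your proposal is correct, but it is worth noting that the paper offers no proof of this statement at all: Theorem \ref{5.7} is quoted verbatim from Klimyk--Schm\"udgen, where the CQG property of the compact real forms is established by checking unitarizability of the finite-dimensional corepresentations and faithfulness of the Haar functional directly. Your route is genuinely different and arguably more economical in the context of this paper: you reduce the $U_q(N)$ case to the already-recorded $SU_q(N)$ case (Theorem \ref{4.11}) by embedding $\mathcal{O}(U_q(N))$ into $\mathcal{O}(SU_q(N))\otimes\mathbb{C}[z,z^{-1}]$ via $u^i_j\mapsto \pi(u^i_j)\otimes z$, and then invoke the two standard permanence properties (tensor products and Hopf $\ast$-subalgebras of CQG algebras are CQG algebras). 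All the steps check out: $\Phi$ respects the homogeneous FRT relations, sends $\mathcal{D}_q$ to the invertible $1\otimes z^N$ and hence factors through the localization, is automatically compatible with the antipode and therefore with the $\ast$-structure $(u^i_j)^*=S(u^j_i)$, and your degree-bookkeeping argument correctly shows that $(\mathcal{D}_q-1)\mathcal{O}(M_q(N))$ contains no nonzero homogeneous element (using that $\mathcal{D}_q$ is central of degree $N$ and a non-zero-divisor), which is exactly what injectivity of $\Phi$ requires. The price you pay is the external input that $\mathcal{O}(M_q(N))$ is a domain and the Schur-product argument hidden in the faithfulness of the tensor-product Haar functional; what you gain is a self-contained derivation of Theorem \ref{5.7} from Theorem \ref{4.11}, mirroring the classical isomorphism $U(N)\cong(SU(N)\times U(1))/\mathbb{Z}_N$ at the level of function algebras.
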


Thus we have the analogues of Proposition \ref{4.5} and Theorem \ref{4.10}. Moreover, Proposition \ref{5.2} remains true in this case and makes $\mathcal{O}(S_q^{2N-1})$ a quantum homogeneous space for $\mathcal{O}(U_q(N))$. There is a natural $C^*$-norm on $\mathcal{O}(U_q(N))$. Upon completion with respect to this norm, one gets the unital $C^*$-algebra $U_q(N)$ which is a compact quantum group in the sense of Definition \ref{4.9}. $U_q(N)$ is also the universal $C^*$-algebra generated by $\mathcal{O}(U_q(N))$ and it acts on $S_q^{2N-1}$ in the $C^*$-algebraic sense lifting the algebraic coaction on $\mathcal{O}(S_q^{2N-1})$. We end this section with a lemma.

\begin{lem}\label{5.8}
Suppose $Q$ is a Hopf $\ast$-algebra and $q^i_j \in Q$, $i,j=1,\cdots,N$ such that the following hold:
\begin{enumerate}[i)]
\item $\Delta(q^i_j)=\sum_k q^i_k \otimes q^k_j$ and $\epsilon(q^i_j)=\delta_{ij}$;
\item $q^i_j$ satisfy the FRT relations \eqref{eq6}, \eqref{eq7} and \eqref{eq8};
\item $\textbf{q}$ satisfies $\textbf{q}\textbf{q}^*=I_n$, where $\textbf{q}=(q^i_j)$ and $\textbf{q}^*=\overline{\textbf{q}}^t$, $\overline{\textbf{q}}=((q^i_j)^*)$.
\end{enumerate}
Then there is a unique $\ast$-morphism $\Psi : \mathcal{O}(U_q(N)) \to Q$ between these Hopf $\ast$-algebras such that $\Psi(u^i_j)=q^i_j$. 
\end{lem}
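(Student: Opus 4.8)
The plan is to build $\Psi$ in two stages, first into the bialgebra $\mathcal{O}(M_q(N))$ and then extending across the localization at $\mathcal{D}_q$ that defines $\mathcal{O}(GL_q(N))=\mathcal{O}(U_q(N))$. First I would observe that $\mathcal{O}(M_q(N))$ is the free unital $\mathbb{C}$-algebra on the generators $u^i_j$ modulo exactly the FRT relations \eqref{eq6}, \eqref{eq7}, \eqref{eq8}. Hence condition (ii) is precisely what is needed: by the universal property of a quotient of a free algebra, the assignment $u^i_j \mapsto q^i_j$ extends uniquely to a unital algebra homomorphism $\Psi_0 : \mathcal{O}(M_q(N)) \to Q$. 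Condition (i) then guarantees that $\Psi_0$ is a bialgebra morphism, since both $\Delta$ and $\epsilon$ are determined on generators by \eqref{eq9}, and these are matched by the corresponding identities for $q^i_j$; compatibility on products follows because $\Psi_0$ is multiplicative and $\Delta,\epsilon$ are algebra maps. (Alternatively, one may invoke Proposition \ref{3.4}(ii) directly: condition (ii) says exactly that $\hat{R}$ is a comodule morphism for the coaction $v \mapsto v_0 \otimes q^{\bullet}_{\bullet}$ on $\mathbb{C}^N$, yielding the bialgebra morphism from universality of $M_q(N)$.)

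Next I would extend $\Psi_0$ to the localization $\mathcal{O}(GL_q(N))=\mathcal{O}(M_q(N))[t]/\langle t\mathcal{D}_q-1\rangle$. The key point is that $\Psi_0(\mathcal{D}_q)$ must be an invertible element of $Q$, so that we may set $\Psi(t)=\Psi_0(\mathcal{D}_q)^{-1}$ and obtain a well-defined algebra map on the quotient. To see invertibility, note that $\mathcal{D}_q$ is group-like (Remark \ref{3.6}), so $\Psi_0(\mathcal{D}_q)$ is a group-like element of $Q$; in any Hopf algebra a group-like element $g$ is invertible with $g^{-1}=S(g)$. Thus $\Psi(t)=S(\Psi_0(\mathcal{D}_q))$ is forced, the defining relation $t\mathcal{D}_q-1$ maps to $0$, and the extension $\Psi:\mathcal{O}(U_q(N))\to Q$ exists and is the unique bialgebra morphism sending $u^i_j\mapsto q^i_j$ (uniqueness is immediate since the $u^i_j$ together with $t=\mathcal{D}_q^{-1}$ generate $\mathcal{O}(U_q(N))$).

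It remains to verify that $\Psi$ is a $\ast$-morphism, and this is where condition (iii) enters and where I expect the main work to lie. The $\ast$-structure on $\mathcal{O}(U_q(N))$ is $(u^i_j)^*=S(u^j_i)$ (Proposition \ref{5.5}), so it suffices to check $\Psi(S(u^j_i))=\Psi(u^i_j)^*=(q^i_j)^*$ on the algebra generators; since $\ast$ and $\Psi$ are both (anti)multiplicative and the relation must also hold on $\mathcal{D}_q$ (which is unitary by Definition \ref{5.6}), this single family of identities suffices. Because $\Psi$ is already a bialgebra morphism, it automatically intertwines the antipodes, $\Psi(S(u^j_i))=S_Q(\Psi(u^j_i))=S_Q(q^j_i)$; hence the required condition is $S_Q(q^j_i)=(q^i_j)^*$. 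This is exactly the content of the unitarity condition (iii): reading $\mathbf{q}\mathbf{q}^*=I$ entrywise gives $\sum_k q^i_k (q^j_k)^* = \delta_{ij}1$, which combined with $\Delta(q^i_j)=\sum_k q^i_k\otimes q^k_j$ and $\epsilon(q^i_j)=\delta_{ij}$ says precisely that the matrix $(q^i_j)$ is a unitary corepresentation, forcing $S_Q(q^j_i)=(q^i_j)^*$ by uniqueness of the antipode on a corepresentation. The main obstacle is thus the careful bookkeeping identifying the unitarity relation with the antipode-$\ast$ compatibility; once this is done the $\ast$-morphism property propagates to all of $\mathcal{O}(U_q(N))$ by multiplicativity, completing the proof.
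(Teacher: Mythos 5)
Your proposal is correct and follows essentially the same route as the paper: a bialgebra morphism out of $\mathcal{O}(M_q(N))$ via the FRT relations, extension through the localization using that $\Phi(\mathcal{D}_q)$ is group-like with inverse $S(\Phi(\mathcal{D}_q))$, and the $\ast$-property deduced by combining the antipode identity $S(\textbf{q})\textbf{q}=\textbf{q}S(\textbf{q})=I$ with hypothesis (iii) to get $S(q^i_j)=(q^j_i)^*$. The only detail the paper spells out that you gloss over is that the $q^i_j$ commute with $S(\Phi(\mathcal{D}_q))$ (by centrality of $\mathcal{D}_q$), which is needed for the map on the central polynomial extension $\mathcal{O}(M_q(N))[t]$ to be well defined.
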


\begin{proof}
Define $\Phi : \mathcal{O}(M_q(N)) \to Q$ on the generators $u^i_j$ by $\Phi(u^i_j)=q^i_j$. Then by hypotheses i), ii), Definition \ref{3.1} and Proposition \ref{3.2}, $\Phi$ extends to a bialgebra morphism. Using Remark \ref{3.6}, we conclude:
\begin{equation}\label{23}
\Delta(\Phi(\mathcal{D}_q))=(\Phi \otimes \Phi)\Delta(\mathcal{D}_q)=\Phi(\mathcal{D}_q) \otimes \Phi(\mathcal{D}_q)
\end{equation}
and
\begin{equation}\label{24}
\epsilon(\Phi(\mathcal{D}_q))=\epsilon(\mathcal{D}_q)=1.
\end{equation} Thus \eqref{23}, \eqref{24} imply that $\Phi(\mathcal{D}_q)$ is a nonzero group-like element in $Q$. Moreover, applying $(S \otimes id)$ and $(id \otimes S)$ on either side of \eqref{23}, we observe $S(\Phi(\mathcal{D}_q))\Phi(\mathcal{D}_q)=\Phi(\mathcal{D}_q)S(\Phi(\mathcal{D}_q))=\epsilon(\Phi(\mathcal{D}_q))1=1$. Hence $\Phi(\mathcal{D}_q)$ is invertible with $\Phi(\mathcal{D}_q)^{-1}=S(\Phi(\mathcal{D}_q))$ We also have $q^i_j\Phi(\mathcal{D}_q)=\Phi(\mathcal{D}_q)q^i_j$, by the centrality of $\mathcal{D}_q$, (see Remark \ref{3.6}), hence $q^i_j$ commute with $\Phi(\mathcal{D}_q)^{-1}=S(\Phi(\mathcal{D}_q))$.\\

Now define $\tilde{\Phi} : \mathcal{O}(M_q(N))[t] \to Q$ by \[\tilde{\Phi}(u^i_j)=q^i_j, \quad \text{and} \quad \tilde{\Phi}(t)=S(\Phi(\mathcal{D}_q)).\] This is well-defined because $q^i_j$ commute with $S(\Phi(\mathcal{D}_q))$. Clearly, $\tilde{\Phi}$ sends the ideal generated by $t\mathcal{D}_q-1$ to $0$. Hence $\tilde{\Phi}$ descends to $\Psi : \mathcal{O}(GL_q(N)) \to Q$ which is, by Proposition \ref{5.4}, a bialgebra morphism.\\

We are left to show that $\Psi$ is a $\ast$-map, with respect to the Hopf $\ast$-algebra structure on $\mathcal{O}(GL_q(N))$ defined in Proposition \ref{5.5} i.e., the Hopf $\ast$-structure of $\mathcal{O}(U_q(N))$. To this end, applying $m(S \otimes id)$ and $m(id \otimes S)$ on either side of the relation $\Delta(q^i_j)=\sum_k q^i_k \otimes q^k_j$ (hypothesis i)), it follows that the antipode $S$ satisfies \[S(\textbf{q})\textbf{q}=\textbf{q}S(\textbf{q})=I_N,\] i.e., $S(\textbf{q})=\textbf{q}^{-1}$, where $S(\textbf{q})=(S(q^i_j))$. This, together with hypothesis iii), implies $S(\textbf{q})=\textbf{q}^*$ i.e., $S(q^i_j)=(q^j_i)^*$. Since $\Psi$ is a bialgebra morphism, it preserves the antipode and so Proposition \ref{5.5} implies that $\Psi$ is a $\ast$-morphism. Finally, the proof of uniqueness is straightforward, hence omitted.
\end{proof}

\section{Main results}

We start with a basic observation.

\begin{lem}\label{6.1}
Let $H$ be any cosemisimple Hopf $\ast$-algebra with the Haar functional $\textbf{h}$. Then for $a,b \in H$, $\textbf{h}(a^*_1b)a^*_2=\textbf{h}(a^*b_1)S(b_2).$ 
\end{lem}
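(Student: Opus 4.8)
The plan is to recognize that the $\ast$-structure is inessential and to reduce the statement to a general Sweedler identity valid in any Hopf algebra carrying the invariant functional $\textbf{h}$. Since $\Delta$ is a $\ast$-homomorphism on a Hopf $\ast$-algebra, $\Delta(a^*)=a_1^*\otimes a_2^*$, so $a_1^*,a_2^*$ are precisely the two legs of $\Delta(a^*)$. Setting $x:=a^*$ and $y:=b$, the assertion becomes
\begin{equation*}
\textbf{h}(x_1 y)\,x_2=\textbf{h}(x y_1)\,S(y_2),\qquad x,y\in H,
\end{equation*}
and it is this identity I would establish. In particular only the two-sided invariance of $\textbf{h}$ recorded in Definition \ref{4.8} is used; neither the $\ast$-structure nor the full strength of cosemisimplicity enters.

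First I would isolate the left-invariance of $\textbf{h}$ on a product: applying $(\textbf{h}\otimes\mathrm{id})\Delta$ to $uv$ and using that $\Delta$ is an algebra map gives $\textbf{h}(u_1 v_1)\,u_2 v_2=\textbf{h}(uv)\,1$ for all $u,v\in H$. Writing $\Delta(y)=y'\otimes y''$ and $\Delta(y')=y'_1\otimes y'_2$, I would apply this with $u=x$, $v=y'$ and then right-multiply both sides by the spectator $S(y'')$, obtaining
\begin{equation*}
\textbf{h}(x_1 y'_1)\,x_2\,y'_2\,S(y'')=\textbf{h}(x y')\,S(y'').
\end{equation*}
The right-hand side is already the desired expression, with $y'=y_1$ and $y''=y_2$.

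To finish I would collapse the left-hand side. By coassociativity the threefold tensor $y'_1\otimes y'_2\otimes y''$ is identified with $y_1\otimes y_2\otimes y_3$, and the antipode axiom in the form $y_1\otimes y_2 S(y_3)=y\otimes 1$ (that is, $\mathrm{id}\ast S=\eta\epsilon$ applied to the last two legs) lets the factor $y'_2 S(y'')=y_2 S(y_3)$ be absorbed, so that $\textbf{h}(x_1 y_1)\,x_2\,y_2 S(y_3)$ reduces to $\textbf{h}(x_1 y)\,x_2$. This yields the boxed identity and hence the lemma after substituting $x=a^*$, $y=b$.

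The only delicate point, and the step I expect to require the most care, is the Sweedler bookkeeping: one must apply the antipode axiom to the correct adjacent pair of legs of the threefold coproduct and verify that the surviving counit recombines with the copy of $y$ sitting inside the argument of $\textbf{h}$, rather than with the external factors. As a consistency check one can run the symmetric argument starting from the right-invariance $(\mathrm{id}\otimes\textbf{h})\Delta=\textbf{h}(\cdot)\,1$, which produces the same identity.
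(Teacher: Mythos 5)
Your proof is correct and is essentially the same argument as the paper's: a direct Sweedler computation using the invariance of $\textbf{h}$ applied to a product, the multiplicativity of $\Delta$, coassociativity, and the antipode axiom. The only difference is cosmetic and symmetric --- the paper inserts $\textbf{h}(z)1=\textbf{h}(z_1)S(z_2)$ into the left-hand side and collapses the $a^*$-legs via $S(a^*_2)a^*_3=\epsilon(a^*_2)1$, whereas you expand a middle expression $\textbf{h}(x_1y_1)x_2y_2S(y_3)$ and collapse the $b$-legs; both routes are valid and of the same length.
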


\begin{proof}
By definition, $\textbf{h}(x)1=\textbf{h}(x_1)x_2$ for $x \in H$. Applying the antipode $S$, we get $\textbf{h}(x)1=\textbf{h}(x_1)S(x_2)$. Now,
\[
\begin{aligned}
\textbf{h}(a^*_1b)a^*_2&= \textbf{h}(a^*_1b_1)S(a^*_2b_2)a^*_3\\
&=\textbf{h}(a^*_1b_1)S(b_2)S(a^*_2)a^*_3\\
&=\textbf{h}(a^*_1b_1)S(b_2)\epsilon(a^*_2)\\
&=\textbf{h}(a^*_1\epsilon(a^*_2)b_1)S(b_2)\\
&=\textbf{h}(a^*b_1)S(b_2).
\end{aligned}
\]
\end{proof}

The following lemma exploits the relation between the two apparently different concepts, namely coquasitriangularity and faithfulness of the Haar functional.

\begin{lem}\label{6.2}
Let $H$ be a coquasitriangular CQG algebra with Haar functional $\textbf{h}$ and real universal r-form $\textbf{r}$. Let the induced inner-product be denoted by $ \langle,\rangle$, i.e., $\langle a,b \rangle=\textbf{h}(a^*b)$. Let $V$ be any subcomodule of $H$ and $r_{V,V}$ be the induced morphism on $V \otimes V$. Then $r_{V,V}$ is hermitian with respect to the restricted inner-product.
\end{lem}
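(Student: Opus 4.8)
The plan is to check the defining identity of a hermitian map directly on simple tensors. On $V\otimes V$ I use the product inner product $\langle x\otimes y,\,x'\otimes y'\rangle=\langle x,x'\rangle\langle y,y'\rangle$, where $\langle a,b\rangle=\textbf{h}(a^*b)$ is conjugate-linear in the first variable. Since $V$ is a subcomodule of $H$ its coaction is the restriction of $\Delta$, so I write $\Delta(v)=v_1\otimes v_2$ and, translating \eqref{eq5}, $r_{V,V}(v\otimes w)=\textbf{r}(v_2,w_2)\,w_1\otimes v_1$. For $a,b,c,d\in V$ the right-hand side is then immediate: $\langle a\otimes b,\,r_{V,V}(c\otimes d)\rangle=\textbf{r}(c_2,d_2)\,\langle a,d_1\rangle\langle b,c_1\rangle=\textbf{r}(c_2,d_2)\,\textbf{h}(a^*d_1)\,\textbf{h}(b^*c_1)$, and the whole task is to show the left-hand side equals this.

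For the left-hand side, $r_{V,V}(a\otimes b)$ sits in the first (conjugate-linear) slot, so $\langle r_{V,V}(a\otimes b),\,c\otimes d\rangle=\overline{\textbf{r}(a_2,b_2)}\,\textbf{h}(b_1^*c)\,\textbf{h}(a_1^*d)$. The first move is to use realness of $\textbf{r}$ (Definition \ref{2.5}) to replace $\overline{\textbf{r}(a_2,b_2)}$ by $\textbf{r}(b_2^*,a_2^*)$; this is precisely what puts the coaction legs into starred form so that Lemma \ref{6.1} can be brought to bear. Then I group the scalar $\textbf{h}(b_1^*c)$ with the leg $b_2^*$ and apply Lemma \ref{6.1} (with its ``$a$'' equal to $b$ and its ``$b$'' equal to $c$) to obtain $\textbf{h}(b_1^*c)\,b_2^*=\textbf{h}(b^*c_1)\,S(c_2)$; in the same way $\textbf{h}(a_1^*d)\,a_2^*=\textbf{h}(a^*d_1)\,S(d_2)$. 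Feeding these back through the bilinearity of $\textbf{r}$, the left-hand side becomes $\textbf{h}(b^*c_1)\,\textbf{h}(a^*d_1)\,\textbf{r}(S(c_2),S(d_2))$.

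Finally I apply Remark \ref{2.3}, $\textbf{r}(S(c_2),S(d_2))=\textbf{r}(c_2,d_2)$, which turns the left-hand side into $\textbf{r}(c_2,d_2)\,\textbf{h}(b^*c_1)\,\textbf{h}(a^*d_1)$, identical to the right-hand side computed above; by sesquilinearity this gives hermiticity of $r_{V,V}$ on all of $V\otimes V$.

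I expect the main obstacle to be the orchestration of the middle step. The form $\textbf{r}$ in the braiding acts on the $\Delta$-legs $a_2,b_2$ of the left pair, whereas the inner product only ever sees the legs $a_1,b_1$; to match the right-hand side one must transport the coaction off the pair $(a,b)$ and onto the pair $(c,d)$, and Lemma \ref{6.1} is exactly the device that accomplishes this. But it applies only after realness has converted the conjugated $\overline{\textbf{r}}$ into $\textbf{r}$ with the starred arguments $b_2^*,a_2^*$, and only once the Sweedler indices have been aligned with the precise shape $\textbf{h}(a_1^*b)\,a_2^*=\textbf{h}(a^*b_1)\,S(b_2)$. Getting the two applications of Lemma \ref{6.1} and the single use of antipode-invariance of $\textbf{r}$ to line up, with the conjugation bookkeeping correct, is the only genuine work; the remainder is formal.
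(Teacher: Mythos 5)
Your proof is correct and follows essentially the same route as the paper's: expand $\langle r_{V,V}(a\otimes b),c\otimes d\rangle$ using conjugate-linearity in the first slot, convert $\overline{\textbf{r}}$ via reality, absorb the scalars $\textbf{h}(b_1^*c)$, $\textbf{h}(a_1^*d)$ into the arguments of $\textbf{r}$ by bilinearity, apply Lemma \ref{6.1} twice to transport the coaction legs onto $c$ and $d$, and finish with $\textbf{r}(S(\cdot),S(\cdot))=\textbf{r}(\cdot,\cdot)$ from Remark \ref{2.3}. The only difference is notational (coproduct Sweedler indices in place of the paper's coaction indices, which is legitimate since $V$ is a subcomodule of $H$).
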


\begin{proof}
We have
\[
\begin{aligned}
\langle r_{V,V}(v \otimes w), v' \otimes w' \rangle
&=\overline{\textbf{r}(v_1,w_1)} \langle w_0 \otimes v_0, v' \otimes w' \rangle\\
&=\textbf{r}(w^*_1,v^*_1) \langle w_0,v' \rangle \langle v_0,w' \rangle \qquad (\text{we use reality of \textbf{r}})\\
&=\textbf{r}(\textbf{h}(w^*_0v')w^*_1, \textbf{h}(v^*_0w')v^*_1)\\
&=\textbf{r}(\textbf{h}(w^*v'_0)S(v'_1),\textbf{h}(v^*w'_0)S(w'_1)) \qquad (\text{using Lemma \ref{6.1}})\\
&=\textbf{r}(S(v'_1),S(w'_1)) \langle w,v'_0 \rangle \langle v,w'_0 \rangle\\
&=\textbf{r}(v'_1,w'_1) \langle v \otimes w, w'_0 \otimes v'_0 \rangle \qquad (\text{by Remark \ref{2.3}})\\
&=\langle v \otimes w, r_{V,V}(v' \otimes w') \rangle.
\end{aligned}
\]
\end{proof}

\begin{lem}\label{6.3}
Let $A$ and $Q$ be Hopf $\ast$-algebras, $B \subset A$ a $\ast$-coideal subalgebra of $A$ and a comodule algebra over $Q$ with coaction $\rho : B \rightarrow B \otimes Q$ such that $\rho(b^*)=\rho(b)^*$ for all $b \in B$. Suppose that $A$ is cosemisimple and $\rho$ preserves the restriction of the Haar functional $\textbf{h}$ of $A$ to $B$ i.e., $(\textbf{h} \otimes id)\rho(b)=\textbf{h}(b)1$ for all $b \in B$. Then $\rho$ preserves the induced inner-product on $B$ given by $\langle a, b \rangle=\textbf{h}(a^*b)$, in the sense described in the paragraph preceding Proposition \ref{4.12}.
\end{lem}

The proof is straightforward, hence omitted.


\begin{prop}\label{6.4}
Let $\pi : \mathcal{O}(U_q(N)) \rightarrow \mathcal{O}(SU_q(N))$ be the quotient homomorphism and $\rho_{u}$, $\rho_{su}$, respectively, be the corresponding coactions on $\mathcal{O}(S_q^{2N-1})$ so that $(id \otimes \pi)\rho_{u}=\rho_{su}$. Then $\rho_u$ preserves the restriction of the Haar functional $\textbf{h}$ on $\mathcal{O}(S_q^{2N-1})$.
\end{prop}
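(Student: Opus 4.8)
The plan is to establish the invariance identity $(\textbf{h}\otimes id)\rho_u(b)=\textbf{h}(b)\,1$ for every $b\in\mathcal{O}(S_q^{2N-1})$. First I would introduce a $\mathbb{Z}$-grading on $\mathcal{O}(S_q^{2N-1})$ by \emph{net degree}, declaring $\deg z_i=+1$ and $\deg z_i^*=-1$; this is well defined since each defining relation \eqref{eq16}--\eqref{eq19} is homogeneous. In parallel I would grade $\mathcal{O}(U_q(N))=\mathcal{O}(M_q(N))[\mathcal{D}_q^{-1}]$ by $\deg u^i_j=1$, hence $\deg\mathcal{D}_q=N$ and $\deg\mathcal{D}_q^{-1}=-N$; this is legitimate because the FRT relations \eqref{eq6}--\eqref{eq8} are quadratic, and from \eqref{eq22} one reads off $\deg S(u^i_j)=-1$. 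Comparing $\rho_u(z_i)=\sum_j z_j\otimes u^j_i$ with $\rho_u(z_i^*)=\sum_j z_j^*\otimes S(u^i_j)$ (Proposition \ref{5.2}) shows that the algebra map $\rho_u$ matches the net degree of each sphere generator with the degree of the corresponding coefficient, so $\rho_u$ sends the net-degree-$d$ subspace into $\mathcal{O}(S_q^{2N-1})_d\otimes\mathcal{O}(U_q(N))_d$. In particular $(\textbf{h}\otimes id)\rho_u$ maps $\mathcal{O}(S_q^{2N-1})_d$ into $\mathcal{O}(U_q(N))_d$, and by linearity it suffices to treat homogeneous $b$.

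Two consequences of $\rho_{su}$ being the restriction of $\Delta$ (Proposition \ref{5.2}) drive the argument. Applying $\pi$ to the right leg and using $(id\otimes\pi)\rho_u=\rho_{su}$ together with the left invariance $(\textbf{h}\otimes id)\rho_{su}=\textbf{h}(\cdot)1$ of Definition \ref{4.8}, I obtain $\pi\bigl((\textbf{h}\otimes id)\rho_u(b)\bigr)=(\textbf{h}\otimes id)\rho_{su}(b)=\textbf{h}(b)\,1$. On the other hand, the right invariance $(id\otimes\textbf{h})\rho_{su}(b)=\textbf{h}(b)\,1$ of Definition \ref{4.8}, combined with the fact that $\rho_{su}$ preserves net degree on the left leg, shows that for homogeneous $b$ of net degree $d$ the element $\textbf{h}(b)\,1$ lies in $\mathcal{O}(S_q^{2N-1})_d$; since it is simultaneously a scalar multiple of $1\in\mathcal{O}(S_q^{2N-1})_0$, for $d\neq0$ this forces $\textbf{h}(b)=0$.

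It remains to remove $\pi$, and this is where I expect the main obstacle. The crucial claim is that $\pi$ is \emph{injective on each homogeneous component} $\mathcal{O}(U_q(N))_d$. I would prove this from two structural facts: that $\mathcal{O}(M_q(N))$ is a domain, so that its central localization $\mathcal{O}(U_q(N))=\mathcal{O}(M_q(N))[\mathcal{D}_q^{-1}]$ is a domain, and that $\ker\pi$ is the ideal generated by the central element $\mathcal{D}_q-1$ (Remark \ref{3.6}). Writing an arbitrary element of $\ker\pi$ as $y(\mathcal{D}_q-1)$ with $y=\sum_{m=a}^{b}y_m$ homogeneous, $y_a,y_b\neq0$, the bottom homogeneous part is $-y_a$ in degree $a$ and the top part is $y_b\mathcal{D}_q$ in degree $b+N$, both nonzero since $\mathcal{D}_q$ is a non-zero-divisor; as $b+N>a$, this element has components in two distinct degrees, so $\ker\pi$ contains no nonzero homogeneous element. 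Granting injectivity, for homogeneous $b$ of net degree $d\neq0$ we have $\textbf{h}(b)=0$, hence $\pi\bigl((\textbf{h}\otimes id)\rho_u(b)\bigr)=0$ with $(\textbf{h}\otimes id)\rho_u(b)\in\mathcal{O}(U_q(N))_d$, giving $(\textbf{h}\otimes id)\rho_u(b)=0=\textbf{h}(b)\,1$; for $d=0$ both $(\textbf{h}\otimes id)\rho_u(b)$ and $\textbf{h}(b)\,1$ lie in $\mathcal{O}(U_q(N))_0$ and have equal image under $\pi$, so they coincide. Summing over $d$ yields the proposition; everything except the injectivity claim is routine bookkeeping with the two gradings.
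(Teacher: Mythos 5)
Your proof is correct, but it takes a genuinely different route from the paper's. The paper disposes of the proposition with a short averaging argument: for any linear functional $f$ with $f(1)=1$, the averaged functional $f'=(f\otimes h_u)\rho_u$ (where $h_u$ is the Haar functional of $U_q(N)$) is $\rho_u$-invariant by construction, hence $\rho_{su}$-invariant via $(id\otimes\pi)\rho_u=\rho_{su}$, and therefore coincides with the restriction of $\textbf{h}$ by the uniqueness of the normalized invariant functional on the quantum homogeneous space; since $f'$ is $\rho_u$-invariant, so is $\textbf{h}|_{\mathcal{O}(S_q^{2N-1})}$. That argument leans on two external inputs: cosemisimplicity of $\mathcal{O}(U_q(N))$ (Theorem \ref{5.7}) and the uniqueness of the invariant functional. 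You replace both by degree bookkeeping: the $\mathbb{Z}$-gradings by net degree, the compatibility of $\rho_u$ with them, the vanishing of $\textbf{h}$ off degree zero, and --- your one genuinely new ingredient --- the degreewise injectivity of $\pi$, proved by noting that a nonzero element $y(\mathcal{D}_q-1)$ of $\ker\pi$ has nonzero homogeneous components in the two distinct degrees $a$ and $b+N$, namely $-y_a$ and $y_b\mathcal{D}_q$. For that last step you do not actually need $\mathcal{O}(M_q(N))$ to be a domain: $\mathcal{D}_q$ is invertible in $\mathcal{O}(GL_q(N))$, hence automatically a non-zero-divisor, so $y_b\mathcal{D}_q\neq 0$ comes for free. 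The trade-off is that your proof is longer and tied to this particular presentation of $\mathcal{O}(U_q(N))$ as a graded localization, whereas the paper's averaging argument would apply verbatim to any cosemisimple Hopf $\ast$-algebra covering $\mathcal{O}(SU_q(N))$; in exchange, yours avoids quoting the uniqueness of invariant functionals on quantum homogeneous spaces and identifies $(\textbf{h}\otimes id)\rho_u(b)$ explicitly component by component.
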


\begin{proof}
Let $f$ be any linear functional on $\mathcal{O}(S_q^{2N-1})$ such that $f(1)=1$. Let $f'$ be defined as $(f \otimes h_u)\rho_u$, where $h_u$ is the Haar functional of $U_q(N)$. Then $f'$ is $\rho_u$ invariant. By $(id \otimes \pi)\rho_{u}=\rho_{su}$, we get that $f'$ is $\rho_{su}$ invariant too. It is well known that the restriction of $\textbf{h}$ is the only functional with this property \cite{MR1492989}. Hence, the conclusion follows.
\end{proof}

We think the following is well known. We included it because we couldn't find it in the literature.

\begin{prop}\label{6.5}
The set $\{z^{k_1}_1\cdots z^{k_N}_N(z^*_{N-1})^{l_{N-1}}\cdots (z^*_1)^{l_1}, z^{k_1}_1\cdots z^{k_{N-1}}_{N-1}(z^*_N)^{l_N}\cdots (z^*_1)^{l_1} \mid k_1,\cdots,k_N,l_{N-1},\cdots, l_1 \in \mathbb{N}\cup\{0\}, \quad l_N \in \mathbb{N}\}$ is a vector space basis of $\mathcal{O}(S_q^{2N-1}).$
\end{prop}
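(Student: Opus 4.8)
The plan is to prove that the proposed set spans $\mathcal{O}(S_q^{2N-1})$ and is linearly independent. For spanning, I would argue that every monomial in the generators $z_i, z_i^*$ can be rewritten, using the defining relations \eqref{eq16}--\eqref{eq19}, into a linear combination of the claimed normal-ordered monomials. The strategy is to fix a \emph{normal order}: all $z$'s to the left (in increasing index order) and all $z^*$'s to the right (in decreasing index order), which matches the shape $z_1^{k_1}\cdots z_N^{k_N}(z^*_{N-1})^{l_{N-1}}\cdots(z^*_1)^{l_1}$. The commutation relations \eqref{eq16} and \eqref{eq17} let me freely reorder $z$'s among themselves, $z^*$'s among themselves, and move a $z_i$ past a $z_j^*$ for $i\neq j$ at the cost of a power of $q$. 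The only genuinely nontrivial move is pushing $z_i$ past $z_i^*$, governed by the ``sphere'' relation \eqref{eq18}, which rewrites $z_i z_i^*$ in terms of $z_i^* z_i$ plus a tail $\sum_{k>i} z_k z_k^*$ of strictly higher index; I would use \eqref{eq19} to eliminate the would-be top term $z_N z_N^*$ (hence the asymmetry in the two families, with the second family carrying the $z_N^*$ factor and $l_N\in\mathbb N$). A careful induction, ordered by total degree and then by some monomial statistic that decreases under each rewriting move, shows the rewriting terminates and produces only elements of the claimed set.

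The cleanest way to organize spanning rigorously is to exhibit a Diamond-Lemma / PBW-type argument in the sense of Bergman: I would check that the rewriting system given by the relations \eqref{eq16}--\eqref{eq19}, oriented so as to produce the normal form above, is confluent by verifying that all overlap ambiguities resolve. This simultaneously yields spanning and a clean criterion for independence. Alternatively, and perhaps more economically, I would invoke the realization of $\mathcal{O}(S_q^{2N-1})$ inside $\mathcal{O}(SU_q(N))$ from Proposition \ref{5.2}, where $z_i=u^1_i$, and use the known PBW-type basis of $\mathcal{O}(SU_q(N))$ to transport linear independence.

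For linear independence, the plan is to use a faithful representation. The universal $C^*$-algebra $S_q^{2N-1}$ (and already the $\ast$-algebra $\mathcal{O}(S_q^{2N-1})$) admits a well-understood irreducible representation on a Hilbert space built from $N$ copies of $\ell^2(\mathbb N)$ tensored with $\ell^2(\mathbb{Z})$ or $L^2(S^1)$, in which the $z_i$ act essentially as weighted shifts; this is exactly the representation underlying the spectral triple of Chakraborty and Pal \cite{MR2458039}. Evaluating a hypothetical linear relation among the normal-ordered monomials on suitable basis vectors, and tracking the distinct shift-multidegrees $(k_1-l_1,\dots,k_N-l_N)$ together with the radial quantum-number data, separates the monomials and forces all coefficients to vanish. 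The main obstacle I anticipate is the bookkeeping in the spanning step: showing that the rewriting terminates and that the special role of index $N$ (forcing the split into the two families) is handled correctly, i.e. that one never needs a $z_N z_N^*$ factor and that every monomial reduces into precisely the stated shape. Once termination and confluence are established, independence via the concrete shift representation is comparatively routine.
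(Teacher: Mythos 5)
Your proposal is correct and follows essentially the same route as the paper, whose entire proof is a one-line appeal to Bergman's Diamond Lemma \cite{MR506890}: you orient the relations \eqref{eq16}--\eqref{eq19} into a rewriting system with the stated normal forms, check that overlap ambiguities resolve, and correctly identify the key subtlety that $z_Nz_N^*=z_N^*z_N$ is eliminated via \eqref{eq19}, which explains the split into two families. The additional independence argument via a faithful shift representation is redundant once confluence is verified, but it is a sound cross-check.
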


\begin{proof}
It is a simple application of Bergman's Diamond lemma ~\cite{MR506890}.
\end{proof}

We state and prove below the main result concerning Hopf coactions on the quantum spheres satisfying suitable conditions.

\begin{thm}\label{6.6}
Let Q be a Hopf $\ast$-algebra coacting on $\mathcal{O}(S_q^{2N-1})$ by $\rho$ making it a $\ast$-comodule algebra, where we have viewed $\mathcal{O}(S_q^{2N-1})$  as a $\ast$-coideal subalgebra of $\mathcal{O}(SU_q(N))$. Moreover, suppose that
\begin{enumerate}[i)]
\item $\rho$ leaves the subspace $V=span\{z_1,\cdots, z_N\}$ invariant, i.e., $\rho(z_i)=\sum_j z_j \otimes q^j_i$ for some $q^i_j \in Q$. We write $\textbf{q}=(q^i_j)$ for the matrix of $Q$-valued coefficients;
\item $\rho$ preserves the inner product on $V$ induced by the Haar functional.
\end{enumerate} Then there is a unique $\ast$-morphism $\Psi : \mathcal{O}(U_q(N)) \rightarrow Q$ such that $(id \otimes \Psi)\rho_u=\rho$. 
\end{thm}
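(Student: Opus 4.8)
The plan is to reduce everything to Lemma~\ref{5.8}: I will verify that the matrix $\textbf{q}=(q^i_j)$ satisfies its three hypotheses, obtain from it the unique $\ast$-morphism $\Psi:\mathcal{O}(U_q(N))\to Q$ with $\Psi(u^i_j)=q^i_j$, and then check that $(id\otimes\Psi)\rho_u=\rho$. The latter holds because both sides are $\ast$-algebra homomorphisms agreeing on the generators $z_i$: indeed $(id\otimes\Psi)\rho_u(z_i)=\sum_j z_j\otimes\Psi(u^j_i)=\sum_j z_j\otimes q^j_i=\rho(z_i)$. Uniqueness of $\Psi$ is then automatic, since $(id\otimes\Psi)\rho_u=\rho$ already forces $\Psi(u^j_i)=q^j_i$. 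The first hypothesis of Lemma~\ref{5.8} is immediate: applying the coaction axiom $(\rho\otimes id)\rho=(id\otimes\Delta)\rho$ to $z_i$ and using that the $z_j$ are linearly independent (being part of the PBW basis of Proposition~\ref{6.5}) gives $\Delta(q^k_i)=\sum_j q^k_j\otimes q^j_i$, while $(id\otimes\epsilon)\rho=id$ gives $\epsilon(q^j_i)=\delta_{ij}$.

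The heart of the matter is the second hypothesis, the FRT relations. Identify $V$ with $\mathbb{C}^N$ via $z_i\leftrightarrow e_i$, write $m:V\otimes V\to\mathcal{O}(S_q^{2N-1})$ for the multiplication, and set $\phi=\rho|_V$ with induced coaction $\phi^{(2)}$ on $V\otimes V$. A direct computation with \eqref{eq10} shows that the $-q^{-1}$-eigenspace $\Lambda$ of $\hat{R}$ is spanned by the vectors $e_i\otimes e_j-q\,e_j\otimes e_i$ for $i<j$; by the defining relations \eqref{eq16} these lie in $\ker m$, and comparing dimensions with the degree-two holomorphic part of the PBW basis of Proposition~\ref{6.5} gives $\Lambda=\ker m$. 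Since $\rho$ is an algebra map, $\rho\circ m=(m\otimes id)\circ\phi^{(2)}$, whence $\phi^{(2)}(\Lambda)\subseteq\ker(m\otimes id)=\Lambda\otimes Q$; thus $\Lambda$ is a subcomodule. To see that the complementary $q$-eigenspace $S$ is also a subcomodule I will use hypothesis (ii): preservation of $\langle\cdot,\cdot\rangle$ on $V$ by $\phi$ implies that $\phi^{(2)}$ preserves the tensor-product inner product on $V\otimes V$, because in the relevant expression $\langle v_0\otimes w_0,v_0'\otimes w_0'\rangle\, w_1^* v_1^* v_1' w_1'$ the middle factor $v_1^* v_1'$ paired with the scalar $\langle v_0,v_0'\rangle$ contracts first and pulls out, leaving the outer $w$-contraction. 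By Lemma~\ref{6.2} the operator $r_{V,V}=t\hat{R}$ (Remark~\ref{4.4}) is hermitian for this inner product, so its eigenspaces are orthogonal and $S=\Lambda^{\perp}$; Proposition~\ref{4.12} then shows $S$ is a subcomodule. Hence $\hat{R}|_{V\otimes V}=q\,P_S-q^{-1}P_\Lambda$ commutes with $\phi^{(2)}$, i.e.\ $\hat{R}$ is a comodule morphism, and Proposition~\ref{3.4}(ii) produces a bialgebra morphism $\mathcal{O}(M_q(N))\to Q$ with $u^i_j\mapsto q^i_j$. In particular the $q^i_j$ satisfy the FRT relations \eqref{eq6}--\eqref{eq8}.

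For the third hypothesis, $\textbf{q}\textbf{q}^*=I_N$, I will apply $\rho$ to the sphere relation \eqref{eq19}. Since $\rho$ is a $\ast$-homomorphism, $\rho(z_i^*)=\rho(z_i)^*=\sum_a z_a^*\otimes(q^a_i)^*$, so $\rho\big(\sum_i z_iz_i^*\big)=\sum_{a,b}z_a z_b^*\otimes(\textbf{q}\textbf{q}^*)^{ab}$, while $\rho(1)=1\otimes 1$. By Proposition~\ref{6.5} the degree-$(1,1)$ monomials $\{z_a z_b^*\}$ obey exactly one linear relation, namely \eqref{eq19} itself ($z_N z_N^*$ is the unique such monomial missing from the basis). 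Expanding both sides in the basis $\{z_a z_b^*:(a,b)\neq(N,N)\}\cup\{1\}$ and comparing coefficients forces $(\textbf{q}\textbf{q}^*)^{ab}=\delta_{ab}1$, that is $\textbf{q}\textbf{q}^*=I_N$. With the three hypotheses verified, Lemma~\ref{5.8} finishes the argument.

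The step I expect to be most delicate is proving that the symmetric eigenspace $S$ is a subcomodule. Comodules over the general Hopf $\ast$-algebra $Q$ need not be semisimple, so the subcomodule $\Lambda$ carries no automatic complement, and it is exactly here that hypothesis (ii) enters: it supplies, through the hermiticity of $r_{V,V}$ (Lemma~\ref{6.2}) together with Proposition~\ref{4.12}, the orthogonal complement $S=\Lambda^{\perp}$ as a subcomodule. By contrast, once the PBW basis of Proposition~\ref{6.5} is in hand the unitarity $\textbf{q}\textbf{q}^*=I_N$ falls out of the sphere relation by itself.
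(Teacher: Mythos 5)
Your proof is correct and follows essentially the same route as the paper's: identify $\ker\mu$ with $\mathrm{im}(\hat{R}-qI)$, the $-q^{-1}$-eigenspace of $\hat{R}$ (the paper's Lemma \ref{6.7}, which you establish by a dimension count against Proposition \ref{6.5} rather than the paper's direct coefficient comparison), use the hermiticity of $r_{V,V}$ from Lemma \ref{6.2} together with Proposition \ref{4.12} to make the complementary eigenspace a subcomodule, deduce the FRT relations via Proposition \ref{3.4}, extract $\textbf{q}\textbf{q}^*=I_N$ from the sphere relation, and conclude with Lemma \ref{5.8}. Your explicit verification that $\phi^{(2)}$ preserves the tensor-product inner product on $V\otimes V$ is a detail the paper leaves implicit but needs for Proposition \ref{4.12}, so it is a welcome addition rather than a deviation.
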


Before we go to the proof, we prove a lemma.

\begin{lem}\label{6.7}
In the notation of Theorem \ref{6.6}, let $\mu$ be the multiplication of $\mathcal{O}(S_q^{2N-1})$ restricted to $V \otimes V$. Then $\ker(\mu)=\mathrm{im}(\hat{R}-qI)$.
\end{lem}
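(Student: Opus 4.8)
The plan is to compute both $\ker(\mu)$ and $\mathrm{im}(\hat{R}-qI)$ explicitly as subspaces of the $N^2$-dimensional space $V \otimes V$, and then check they coincide. First I would understand the right-hand side. By Remark \ref{3.3}, $\hat{R}$ satisfies $(\hat{R}-qI)(\hat{R}+q^{-1}I)=0$, so its only possible eigenvalues are $q$ and $-q^{-1}$; since these are distinct (as $q$ is a positive real number with $q \neq q^{-1}$ generically, and even when $q=1$ the relation degenerates controllably), $\hat{R}$ is diagonalizable and $V \otimes V$ splits as the direct sum of the $q$-eigenspace and the $(-q^{-1})$-eigenspace. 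Consequently $\mathrm{im}(\hat{R}-qI)$ is exactly the $(-q^{-1})$-eigenspace of $\hat{R}$, which is the ``antisymmetric'' part; its dimension is $\binom{N}{2}=N(N-1)/2$, matching the count of antisymmetric tensors. I would pin down a basis of this eigenspace directly from the matrix formula \eqref{eq10}, obtaining relations among the basis vectors $e_i \otimes e_j$.

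Next I would compute $\ker(\mu)$. Here $\mu(e_i \otimes e_j)=z_i z_j$, and the relevant relations are \eqref{eq16}, namely $z_i z_j = q z_j z_i$ for $i<j$, together with the fact that the $z_i^2$ (and $z_iz_j$ for $i<j$) are linearly independent in $\mathcal{O}(S_q^{2N-1})$, which is guaranteed by the basis in Proposition \ref{6.5}. The relation $z_iz_j - qz_jz_i=0$ for $i<j$ tells me that each vector $e_i \otimes e_j - q\, e_j \otimes e_i$ (for $i<j$) lies in $\ker(\mu)$, giving $\binom{N}{2}$ linearly independent elements. Since the images $\{z_iz_j : i \le j\}$ are linearly independent by Proposition \ref{6.5}, the rank of $\mu$ is exactly the number $N(N+1)/2$ of such monomials, so $\dim \ker(\mu)=N^2 - N(N+1)/2 = N(N-1)/2$, and the vectors $e_i\otimes e_j - q\,e_j\otimes e_i$ form a basis of $\ker(\mu)$.

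Finally I would match the two descriptions. Using Remark \ref{4.4}, the braiding morphism $\sigma=\mathbf{r}_{\mathbb{C}^N,\mathbb{C}^N}$ equals $t\hat{R}$, and $\sigma$ is a comodule morphism for the coaction of $\mathcal{O}(SU_q(N))$ on $V=\mathrm{span}\{z_1,\dots,z_N\}$. Because $V\otimes V$ is a subcomodule on which the multiplication $\mu$ intertwines the braiding with the opposite-algebra structure, the kernel of $\mu$ is forced to be a $\hat{R}$-invariant subspace; being $N(N-1)/2$-dimensional, it must be precisely one of the two eigenspaces. A direct comparison of the explicit basis vectors $e_i\otimes e_j - q\,e_j\otimes e_i$ against the entries of $\hat{R}-qI$ from \eqref{eq10} then confirms these vectors are annihilated by $\hat{R}+q^{-1}I$, i.e.\ they are $(-q^{-1})$-eigenvectors, exactly spanning $\mathrm{im}(\hat{R}-qI)$. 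This gives the equality $\ker(\mu)=\mathrm{im}(\hat{R}-qI)$.

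The main obstacle I anticipate is the bookkeeping in the last step: verifying from the raw matrix entries in \eqref{eq10} that $e_i\otimes e_j - q\,e_j\otimes e_i$ is genuinely a $(-q^{-1})$-eigenvector, and in particular handling the off-diagonal $(q-q^{-1})$ correction term carefully for the cases $i<j$, $i=j$, and $i>j$ separately. The conceptual content is clear once the eigenspace decomposition and the dimension count are in hand, but the index manipulation linking the deformation relations \eqref{eq16} to the precise action of $\hat R$ is where care is required.
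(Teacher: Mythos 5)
Your proposal is correct and, underneath the packaging, runs on the same two computations as the paper: the kernel of $\mu$ is identified using the relations \eqref{eq16} together with the linear independence of the monomials $z_iz_j$, $i\le j$, guaranteed by Proposition \ref{6.5}, and $\mathrm{im}(\hat{R}-qI)$ is identified by evaluating \eqref{eq10} on basis vectors. The paper simply computes $(\hat{R}-qI)(z_i\otimes z_j)$ case by case (its equation \eqref{eq25}) and reads off the image, and it finds $\ker(\mu)$ by solving for the coefficients $c_{ij}$ directly, whereas you route the first computation through the eigenspace decomposition coming from $(\hat{R}-qI)(\hat{R}+q^{-1}I)=0$ and the second through rank--nullity; these are equivalent reorganizations, and your eigenspace framing arguably explains better why the answer is the $q$-antisymmetric part. (A minor point: the eigenvalues $q$ and $-q^{-1}$ are distinct for \emph{every} positive real $q$, including $q=1$, so your hedging there is unnecessary.) One genuine logical misstep to flag: the sentence claiming that $\ker(\mu)$, being $\hat{R}$-invariant of dimension $N(N-1)/2$, ``must be precisely one of the two eigenspaces'' is a non sequitur --- an invariant subspace of a diagonalizable operator can be a direct sum of proper pieces of both eigenspaces, so invariance plus a dimension count does not single out an eigenspace. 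Fortunately this step is redundant in your argument: once you verify directly that each $e_i\otimes e_j - q\,e_j\otimes e_i$ ($i<j$) is a $(-q^{-1})$-eigenvector and that the $(-q^{-1})$-eigenspace has dimension exactly $\binom{N}{2}$, the containment $\ker(\mu)\subseteq\mathrm{im}(\hat{R}-qI)$ plus equality of dimensions closes the proof, so you should simply delete that sentence.
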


\begin{proof}
We first claim that $\ker(\mu)=span\{z_i \otimes z_j -qz_j \otimes z_i; i < j\}$. Clearly, $z_i \otimes z_j -qz_j \otimes z_i \in \ker(\mu)$. Moreover, $v=\sum_{i,j}c_{ij}z_i \otimes z_j \in V \otimes V$ ($c_{ij} \in \mathbb{C}$) is in $\ker(\mu)$ if and only if \[0=\sum_{ij}c_{ij}z_iz_j=\sum_{i<j}(c_{ij}+q^{-1}c_{ji})z_iz_j+\sum_ic_{ii}z_i^2 \qquad (\text{by } \eqref{eq16}).\] It follows, by the linear independence of $\{z_iz_j, z_i^2; i<j\}$ (Proposition \ref{6.5}), that $c_{ii}=0$ for all $i$ and $c_{ij}+q^{-1}c_{ji}=0$ for all $i<j$. Hence, $v$ reduces to the form $v=\sum_{i<j}c_{ij}(z_i \otimes z_j - qz_j \otimes z_i)$, proving the claim.\\

On the other hand, using the definition of $\hat{R}$ given by  \eqref{eq10}, an easy computation gives
\begin{equation}\label{eq25}
(\hat{R}-qI)(z_i \otimes z_j)=
\begin{cases}
q^{-1}(z_i \otimes z_j - qz_j \otimes z_i) & i<j,\\
z_j \otimes z_i - qz_i \otimes z_j & i>j,\\
0 & i=j.
\end{cases}
\end{equation}
Hence $\mathrm{im}(\hat{R}-qI)=span\{z_i \otimes z_j - qz_j \otimes z_i; i < j\}=\ker(\mu)$.
\end{proof}

\begin{proof}[Proof of Theorem \ref{6.6}]
We start with the observation that since $V$ is a $Q$-comodule, we have that $\Delta(q^i_j)=\sum_k q^i_k \otimes q^k_j$ and $\epsilon(q^i_j)=\delta_{ij}$.\\

Now recall the map $\sigma$ from Remark \ref{4.4}. By Lemma \ref{6.2}, $\sigma$ is hermitian. Since $\sigma=t\hat{R}$ and $t$ is real, $\hat{R}$ is also hermitian. The multiplication of $\mathcal{O}(S_q^{2N-1})$ is a $Q$-comodule morphism, $\mathcal{O}(S_q^{2N-1})$ being a $Q$-comodule algebra. Since $V$ is assumed to be a subcomodule of $\mathcal{O}(S_q^{2N-1})$, the restriction $\mu$ of the multiplication to $V \otimes V$ is also a $Q$-comodule morphism. We recall that $V \otimes V$ is a $Q$-comodule with coaction $\rho_{V \otimes V}(v \otimes w)=v_0 \otimes w_0 \otimes v_1w_1$, where $\rho(v)=v_0 \otimes v_1$. As $\mu$ is a $Q$-comodule morphism, we have $\rho \mu=(\mu \otimes id)\rho_{V \otimes V}$. Thus $\ker(\mu)=\mathrm{im}(\hat{R}-qI)$ (from Lemma \ref{6.7} above) is also a $Q$-comodule whose orthogonal complement is the image of $\hat{R} + q^{-1}I$, by Remark \ref{3.3}. Hence, by Proposition \ref{4.12}, $\mathrm{im}(\hat{R} + q^{-1}I)$ is a $Q$-comodule. $\hat{R}$ has two eigenvalues, namely, $q$ and $-q^{-1}$, the corresponding eigenspaces being $\mathrm{im}(\hat{R}+q^{-1}I)$ and $\mathrm{im}(\hat{R}-qI)$, respectively. Since $\rho$ preserves both the eigenspaces, $\hat{R}$ becomes a $Q$-comodule morphism. By Proposition \ref{3.4}, $\textbf{q}$ then satisfies the FRT relations \eqref{eq6}, \eqref{eq7}, \eqref{eq8}.\\

By assumption, $\rho$ preserves the relation $\sum_{i=1}^{N}z_iz^*_i=1$. Applying $\rho$ to both sides, comparing coefficients and using Proposition \ref{6.5} we get that $\textbf{q}\textbf{q}^*=I_n$.\\

Thus, Lemma \ref{5.8} yields a unique $\ast$-morphism $\Psi : \mathcal{O}(U_q(N)) \to Q$ such that $\Psi(u^i_j)=q^i_j$. Moreover, $(id \otimes \Psi)\rho_u$ and $\rho$ agree on the generators $z_i$, hence they are equal. For any other $\Psi'$ satisfying $(id \otimes \Psi')\rho_u=\rho$, we get by evaluating both sides on the generators $z_i$, that $\Psi'(u^i_j)=q^i_j$. Hence, by the uniqueness in Lemma \ref{5.8}, $\Psi'=\Psi$. 
\end{proof}

\begin{rem}\label{6.8}
The equation \eqref{eq19} can also be written as $\sum_{i=1}^{N}q^{-2i}z_i^*z_i=q^{-2}$. Now applying $\rho$ to both sides, comparing coefficients and using Proposition \ref{6.5}, we see that $\textbf{q}$ satisfies $E\overline{\textbf{q}}E^{-1}\textbf{q}^t=\textbf{q}^tE\overline{\textbf{q}}E^{-1}=I_n$, where $E$ is the matrix \[\dfrac{1}{q^{n-1}[n]_q}\hspace{.09cm}\text{diag}\hspace{.09cm}(1,q^2,q^4,\cdots,q^{2(n-1)}),\qquad [n]_q:=\dfrac{q^n-q^{-n}}{q-q^{-1}}.\] See \cites{MR3275038, MR1382726}.
\end{rem}

We finally have the following theorem.

\begin{thm}\label{6.9}
Consider the category $\mathcal{C}$ consisting of Hopf $\ast$-algebras satisfying the hypotheses of Theorem \ref{6.6} as objects and Hopf $\ast$-algebra morphisms intertwining the coactions as morphisms. Then $\mathcal{O}(U_q(N))$ is a universal object in this category.
\end{thm}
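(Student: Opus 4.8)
The plan is to show that $\mathcal{O}(U_q(N))$ is a universal (initial) object in the category $\mathcal{C}$, meaning that for every object $Q$ of $\mathcal{C}$ there is a unique morphism in $\mathcal{C}$ from $\mathcal{O}(U_q(N))$ to $Q$. The essential observation is that $\mathcal{O}(U_q(N))$ is itself an object of $\mathcal{C}$: by Proposition \ref{5.2} (extended to $U_q(N)$ as noted after Theorem \ref{5.7}), the coaction $\rho_u$ makes $\mathcal{O}(S_q^{2N-1})$ a $\ast$-comodule algebra; condition (i) of Theorem \ref{6.6} holds because $\rho_u(z_i)=\sum_j z_j \otimes u^j_i$ leaves $V=\mathrm{span}\{z_1,\dots,z_N\}$ invariant; and condition (ii), preservation of the inner product induced by the Haar functional, follows from Proposition \ref{6.4} together with Lemma \ref{6.3}. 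Hence $\mathcal{O}(U_q(N))$ genuinely lives in $\mathcal{C}$, with structure matrix $\mathbf{u}=(u^i_j)$.

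Next I would invoke Theorem \ref{6.6} directly. Given an arbitrary object $Q$ of $\mathcal{C}$ with coaction $\rho$, Theorem \ref{6.6} produces a unique $\ast$-morphism $\Psi : \mathcal{O}(U_q(N)) \to Q$ satisfying $(id \otimes \Psi)\rho_u = \rho$. This equation is precisely the statement that $\Psi$ intertwines the two coactions, so $\Psi$ is a morphism in $\mathcal{C}$. Thus existence of a morphism $\mathcal{O}(U_q(N)) \to Q$ is immediate from Theorem \ref{6.6}, and so is its uniqueness \emph{as a coaction-intertwining map}, since Theorem \ref{6.6} already asserts uniqueness of $\Psi$ with the property $(id \otimes \Psi)\rho_u = \rho$.

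The one point that needs a remark, rather than a genuine obstacle, is to confirm that the unique $\Psi$ supplied by Theorem \ref{6.6} is automatically a morphism of Hopf $\ast$-algebras and not merely a $\ast$-algebra map. This is already handled inside the proof of Theorem \ref{6.6} via Lemma \ref{5.8}, which delivers $\Psi$ as a bialgebra morphism that is also a $\ast$-map and hence, being a $\ast$-preserving bialgebra morphism between CQG algebras, a Hopf $\ast$-algebra morphism. Therefore the statement follows formally by assembling these ingredients: $\mathcal{O}(U_q(N))$ is an object of $\mathcal{C}$, and for every object $Q$ there is exactly one morphism to $Q$ in $\mathcal{C}$, which is the definition of a universal (initial) object.

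I expect essentially no serious obstacle here, since Theorem \ref{6.6} does all the heavy lifting; the only care required is the bookkeeping that verifies $\mathcal{O}(U_q(N)) \in \mathcal{C}$ and that the intertwining condition coincides with the categorical notion of morphism. If anything, the subtle step is checking condition (ii) for $\mathcal{O}(U_q(N))$ itself, but this is exactly the content of Proposition \ref{6.4} combined with Lemma \ref{6.3}, so it is available for free.
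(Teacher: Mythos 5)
Your proposal is correct and follows essentially the same route as the paper: the paper's own proof consists precisely of citing Proposition \ref{6.4} and Lemma \ref{6.3} to place $\mathcal{O}(U_q(N))$ in $\mathcal{C}$, and then invoking Theorem \ref{6.6} for the universal property. Your additional remarks (that the intertwining condition is the categorical morphism condition, and that $\Psi$ is a Hopf $\ast$-algebra morphism via Lemma \ref{5.8}) only spell out bookkeeping the paper leaves implicit.
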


\begin{proof}
By Proposition \ref{6.4} and Lemma \ref{6.3}, $\mathcal{O}(U_q(N))$ is an object in this category. Then Theorem \ref{6.6} shows that $\mathcal{O}(U_q(N))$ is universal with that property.
\end{proof}

\section{An application}

We provide an application of the main result of the previous section, namely, that of determining the quantum isometry group of the odd sphere, in the sense of \cite{MR2555012}. We begin by defining unitary representations of a compact quantum group.

\begin{defn}\label{7.1}\cite{MR901157}
A unitary representation of a compact quantum group $(S, \Delta)$ on a Hilbert space $H$ is a map $u$ from $H$ to $H \otimes S$ such that the element $\tilde{u} \in M(K(H) \otimes S)$ given by $\tilde{u}(\xi \otimes b)=u(\xi)(1 \otimes b)$ ($\xi \in H$, $b \in S$) is a unitary satisfying $(id \otimes \Delta)\tilde{u}=\tilde{u}_{(12)}\tilde{u}_{(13)}$, where for an operator $X \in B(H_1 \otimes H_2)$ we have denoted by $X_{(12)}$ and $X_{(13)}$ the operators $X \otimes I_{H_2} \in B(H_1 \otimes H_2 \otimes H_2)$ and $\Sigma_{23}X_{(12)}\Sigma_{23}$ respectively (where $\Sigma_{23}$ being the unitary on $H_1 \otimes H_2 \otimes H_2$ that switches the two copies of $H_2$).
\end{defn}

Irreducible unitary representations of the quantum group $SU_q(N)$ are indexed by Young tableaux $\lambda=(\lambda_1,\cdots,\lambda_N)$, where $\lambda_i$'s are nonnegative integers $\lambda_1 \geq \cdots \geq \lambda_N$ \cite{MR943923}*{page 42}. Let $H_{\lambda}$ be the carrier Hilbert space corresponding to $\lambda$ whose basis elements are parametrized by arrays of the form
\[\textbf{r}=\begin{bmatrix}
r_{11} & r_{12} & \cdots & r_{1,N-1} & r_{1N}\\
r_{21} & r_{22} & \cdots & r_{2,N-1}\\
\vdots & \vdots\\
r_{N-1,1} & r_{N-1,2}\\
r_{N1}\\
\end{bmatrix},\]
where $r_{ij}$'s are integers satisfying $r_{1j}=\lambda_j$ for $j=1,\cdots,N$, $r_{ij} \geq r_{i+1,j} \geq r_{i,j+1} \geq 0$ for all $i,j$. Such arrays are known as Gelfand-Tsetlin (GT) tableaux (see \cite{MR2458039}*{page 29} for details). For a GT tableaux $\textbf{r}$, $\textbf{r}_i$ will denote its $i$th row.\\

It was shown by Woronowicz (\cite{MR901157}) that any compact quantum group possesses a Haar functional in the sense of Definition \ref{4.8}. Let us denote the Haar functional of $SU_q(N)$ again by $\textbf{h}$. Let $L_2(SU_q(N))$ denote the corresponding G.N.S space and $L_2(S_q^{2N-1})$ denote the closure of $S_q^{2N-1}$ in $L_2(SU_q(N))$.

\begin{prop}\label{7.2} \cite{MR2458039}*{page 34}
Assume $N > 2$. The restriction of the right regular representation of $SU_q(N)$to $L_2(S_q^{2N-1})$ decomposes as a direct sum of the irreducibles, with each copy occurring exactly once, given by the Young tableau $\lambda_{n,k}=(n+k,k,k,\cdots,k,0)$ with $n,k \in \mathbb{N}_0.$
\end{prop}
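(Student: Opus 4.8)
The plan is to combine the Peter--Weyl decomposition of $\mathcal{O}(SU_q(N))$ with a Frobenius-reciprocity argument that identifies $\mathcal{O}(S_q^{2N-1})$ with a space of invariants, and then to read off the answer from the branching rule $SU_q(N) \downarrow SU_q(N-1)$. First I would recall that, since $\mathcal{O}(SU_q(N))$ is a cosemisimple CQG algebra (Theorem \ref{4.11}), Peter--Weyl gives $\mathcal{O}(SU_q(N)) = \bigoplus_\lambda C(\lambda)$, where $C(\lambda) = \mathrm{span}\{u^\lambda_{ij}\}$ is spanned by the matrix coefficients of the irreducible $H_\lambda$. Reading $\Delta(u^\lambda_{ij}) = \sum_k u^\lambda_{ik} \otimes u^\lambda_{kj}$ as the right coaction, for each fixed row index $i$ the span $\{u^\lambda_{ij} : j\}$ is a right subcomodule isomorphic to $H_\lambda$; hence, under the right regular representation, $C(\lambda) \cong H_\lambda^{\oplus \dim H_\lambda}$, with multiplicity labelled by the row index.

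Next I would realise $\mathcal{O}(S_q^{2N-1})$ as a space of invariants. Let $\pi : \mathcal{O}(SU_q(N)) \to \mathcal{O}(SU_q(N-1))$ be the Hopf $\ast$-algebra surjection onto the lower-right corner block (sending $u^1_1 \mapsto 1$, $u^1_j, u^i_1 \mapsto 0$ for $i,j > 1$, and $u^i_j \mapsto \tilde{u}^{i-1}_{j-1}$ for $i,j \geq 2$). A direct computation with Proposition \ref{5.2} shows $(\pi \otimes \mathrm{id})\Delta(z_i) = 1 \otimes z_i$, and similarly for the $z_i^*$; thus $\mathcal{O}(S_q^{2N-1})$ is contained in the algebra of left $SU_q(N-1)$-invariants, and the homogeneous-space description of the sphere yields equality. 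Restricting the decomposition of the previous step to this invariant subalgebra, the condition $(\pi \otimes \mathrm{id})\Delta = 1 \otimes (\cdot)$ forces the row index to lie in the space of trivial subrepresentations of $H_\lambda|_{SU_q(N-1)}$. Consequently the multiplicity of $H_\lambda$ in $L_2(S_q^{2N-1})$ equals the multiplicity $m_\lambda$ of the trivial representation of $SU_q(N-1)$ in $H_\lambda \downarrow SU_q(N-1)$.

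Finally I would compute $m_\lambda$ from the branching rule, which for $SU_q$ coincides with the classical one and is encoded by the interlacing condition on the GT tableaux recalled above. Writing $\lambda = (\lambda_1, \ldots, \lambda_N)$ with $\lambda_N = 0$, the trivial $SU_q(N-1)$-type corresponds to a second GT row $(m, m, \ldots, m)$; the interlacing $\lambda_j \geq m \geq \lambda_{j+1}$ for $j = 1, \ldots, N-1$ forces $\lambda_2 = \cdots = \lambda_{N-1} = m$ together with $\lambda_1 \geq m \geq 0$, so that $\lambda = (n+k, k, \ldots, k, 0)$ with $n = \lambda_1 - m \geq 0$ and $k = m \geq 0$, and exactly one admissible $m$ exists. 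This gives $m_\lambda = 1$ precisely for $\lambda = \lambda_{n,k}$ and $m_\lambda = 0$ otherwise, yielding the claimed multiplicity-free decomposition. The hypothesis $N > 2$ is exactly what makes the middle block $(m,\ldots,m)$ present, so that $k$ is determined by $\lambda$ and the parametrisation $(n,k) \mapsto \lambda_{n,k}$ is injective; for $N = 2$ the subgroup degenerates and one recovers instead the full Peter--Weyl decomposition of $SU_q(2)$, where each $H_\lambda$ occurs $\dim H_\lambda$ times.

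The main obstacle will be the bookkeeping in the second step: fixing the left/right and block conventions so that the sphere is identified with the correct invariant subspace and proving the asserted equality (not merely inclusion), together with a clean justification that the quantum branching multiplicities agree with the classical interlacing rule for the GT tableaux.
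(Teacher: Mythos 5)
The paper does not prove this proposition; it is imported verbatim from \cite{MR2458039} (Chakraborty--Pal), so there is no internal proof to compare against. Your route --- Peter--Weyl for the cosemisimple algebra $\mathcal{O}(SU_q(N))$, Frobenius reciprocity identifying the multiplicity of $H_\lambda$ in $L_2(SU_q(N)/SU_q(N-1))$ with the dimension of the $SU_q(N-1)$-fixed vectors in $H_\lambda$, and the Gelfand--Tsetlin interlacing computation showing that a constant middle row $(m,\dots,m)$ forces $\lambda=\lambda_{n,k}$ and is unique when $N>2$ --- is the standard argument and is essentially what the cited source does. The interlacing computation is correct, as is your observation that $N=2$ degenerates to the regular representation of $SU_q(2)$.

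The one genuine gap is the one you flag yourself, and it is worth being precise about what it costs. The containment $\mathcal{O}(S_q^{2N-1})\subseteq\{a: (\pi\otimes\mathrm{id})\Delta(a)=1\otimes a\}$ is an easy computation on generators, but by itself it only yields an \emph{upper} bound: each irreducible occurring in $L_2(S_q^{2N-1})$ is some $H_{\lambda_{n,k}}$ with multiplicity at most one. To get ``each copy occurring exactly once'' you must either prove the reverse inclusion (the genuinely nontrivial half of the homogeneous-space description) or, more cheaply, exhibit inside $\mathcal{O}(S_q^{2N-1})$ a nonzero vector of weight $\lambda_{n,k}$ for every $n,k$. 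The latter can be done exactly as in the paper's own proof of Proposition \ref{7.3}(i): if $v$ and $\overline{v}$ are the highest weight vectors of $V_{1,0}$ and $V_{0,1}$, then $v^n\overline{v}^k$ lies in the sphere algebra, has weight $\lambda_{n,k}$, and is nonzero because $\mathcal{O}(SU_q(N))$ is an integral domain. Adding that one sentence closes the argument. The remaining point you raise --- that the $q$-deformed branching multiplicities agree with the classical interlacing rule --- is standard for $q>0$ and needs only a citation, not a proof.
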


Let the irreducible representation corresponding to the Young tableau $\lambda_{n,k}=(n+k,k,\cdots,k,0)$ be denoted by $V_{n,k}$. According to our previous notation $V=V_{1,0}$, the irreducible with Young tableau $\lambda_{1,0}=(1,0,\cdots,0)$. Moreover, observe that $V_{0,1}$, the irreducible with Young tableau $\lambda_{0,1}=(1,1,\cdots,1,0)$, is the conjugate corepresentation to $V$. In our previous notation, this is nothing but the span of $z_1^*, \cdots, z_N^*$, i.e., $V_{0,1}=V^*=\{v^* \mid v \in V\}$.\\

Consider the space \[W^{\otimes (n,k)}:=\underbrace{V_{1,0} \otimes \cdots \otimes V_{1,0}}_\text{n times} \otimes \underbrace{V_{0,1} \otimes \cdots \otimes V_{0,1}}_\text{k times}.\] Let us denote the image of it in the algebra under multiplication by \[W^{\bullet (n,k)}:=\underbrace{V_{1,0} \bullet \cdots \bullet V_{1,0}}_\text{n times} \bullet \underbrace{V_{0,1} \bullet \cdots \bullet V_{0,1}}_\text{k times}.\] Let $\Lambda^{\otimes (n,k)}$ be the set of the Young tableaux for the irreducible representations occurring in the decomposition of $W^{\otimes (n,k)}$ and $\Lambda^{\bullet (n,k)}$ be the corresponding set for the decomposition of $W^{\bullet (n,k)}$. The following proposition gives a description of $W^{\bullet (n,k)}$.

\begin{prop}\label{7.3}
\begin{enumerate}[i)]
\item $V_{n,k}$ occurs with multiplicity exactly one in the orthogonal decomposition of $W^{\bullet (n,k)}$ into irreducibles;
\item If a copy of $V_{m,l}$ occurs in the irreducible decomposition of the orthogonal complement $(V_{n,k})^{\perp}$ of $V_{n,k}$ in $W^{\bullet (n,k)}$ then we must have:
\[\text{either,} \quad l < k \text{ and }m \leq n+k-l;\]
\[\text{or,} \quad l=k \text{ and } m < n.\]
\end{enumerate}
\end{prop}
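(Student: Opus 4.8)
The plan is to realize $W^{\bullet(n,k)}$ as a quotient comodule of $W^{\otimes(n,k)}$ and then pin down its constituents by combining three constraints: the list supplied by Proposition \ref{7.2}, a $\mathbb{Z}$-grading on the sphere, and a weight estimate. First, since $\mathcal{O}(S_q^{2N-1})$ is a comodule algebra, the multiplication $m : W^{\otimes(n,k)} \to \mathcal{O}(S_q^{2N-1})$ is a morphism of $SU_q(N)$-comodules, so $W^{\bullet(n,k)}=\mathrm{im}(m)$ is a quotient of $W^{\otimes(n,k)}$. Consequently every irreducible and every torus weight occurring in $W^{\bullet(n,k)}$ already occurs in $W^{\otimes(n,k)}$. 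Moreover $W^{\bullet(n,k)}\subset \mathcal{O}(S_q^{2N-1})\subset L_2(S_q^{2N-1})$, so Proposition \ref{7.2} forces each constituent to be some $V_{m,l}$, occurring with multiplicity at most one.

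Next I would use the $\mathbb{Z}$-grading of $\mathcal{O}(S_q^{2N-1})$ given by $\deg z_i=1$, $\deg z_i^*=-1$, which is well defined because each relation \eqref{eq16}--\eqref{eq19} is homogeneous. In this grading $W^{\bullet(n,k)}$ lies in degree $n-k$, while $V_{m,l}$, being realized inside $W^{\bullet(m,l)}$ and occurring only once in $\mathcal{O}(S_q^{2N-1})$, is homogeneous of degree $m-l$. Hence every $V_{m,l}\subseteq W^{\bullet(n,k)}$ satisfies $m-l=n-k$.

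For part (i) I would identify $V_{n,k}$ as the Cartan component. The highest weight of $V_{1,0}$ is carried by $z_1$, of weight $(1,0,\dots,0)$, and that of $V_{0,1}$ by $z_N^*$, of weight $(0,\dots,0,-1)$, so the top weight of $W^{\otimes(n,k)}$ is $(n,0,\dots,0,-k)$, the dominant representative of $\lambda_{n,k}$, occurring with multiplicity one. Its image under multiplication is $z_1^n(z_N^*)^k$, a nonzero basis monomial by Proposition \ref{6.5}; thus the copy of $V_{n,k}$ survives in $W^{\bullet(n,k)}$, and by the multiplicity bound it occurs exactly once.

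Finally, for part (ii) I would add a weight estimate. A weight of $W^{\otimes(n,k)}$ is a sum of $n$ standard basis vectors (the weights of the $z_i$) minus $k$ standard basis vectors (the weights of the $z_j^*$), so the sum of the absolute values of its negative coordinates is at most $k$. The highest weight vector of a constituent $V_{m,l}$, sitting in degree $n-k=m-l$, carries the weight $(m,0,\dots,0,-l)$ (the dominant representative of $\lambda_{m,l}$ in that degree), whose negative part equals $l$; since this weight occurs in $W^{\otimes(n,k)}$ we conclude $l\le k$. Combining with $m-l=n-k$: if $l=k$ then $m=n$, i.e. $V_{m,l}=V_{n,k}$, which is excluded from $(V_{n,k})^\perp$; otherwise $l<k$ and $m=n-k+l\le n+k-l$, which is precisely the first alternative of (ii). I expect the main obstacle to be the bookkeeping justifying that a constituent $V_{m,l}$ really carries the genuine torus weight $(m,0,\dots,0,-l)$: this is exactly where the grading (fixing the degree, hence the coordinate sum, to be $m-l$) together with dominance removes the ambiguity of adding multiples of $(1,\dots,1)$, and where one must check that the weight of a quotient constituent is inherited from $W^{\otimes(n,k)}$.
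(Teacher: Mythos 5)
Your argument is correct, and its skeleton coincides with the paper's: realize $W^{\bullet(n,k)}$ as a comodule quotient of $W^{\otimes(n,k)}$, use Proposition \ref{7.2} to restrict the constituents to the $V_{m,l}$'s with multiplicity at most one, and exhibit the Cartan component for part (i). The two key steps are implemented differently, though. For part (i), the paper shows that the product of the two primitive vectors, i.e.\ $z_1^n(z_N^*)^k$, is nonzero by quoting that $\mathcal{O}(SU_q(N))$ is an integral domain, whereas you read it off as a basis monomial from Proposition \ref{6.5}; both work, and yours avoids importing the domain property. For part (ii), the paper simply cites the dominance property of constituents of a tensor product (\cite{MR1358358}, Proposition 10.1.16), which gives $l\le k$ and $m+l\le n+k$ at once; you instead unfold that citation into an explicit estimate on the negative part of the weights of $W^{\otimes(n,k)}$ and supplement it with the $\mathbb{Z}$-grading $\deg z_i=1$, $\deg z_i^*=-1$. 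That grading is the genuinely new ingredient: it resolves the mod-$(1,\dots,1)$ ambiguity in the highest weight of a constituent, as you note, and yields the sharper constraint $m-l=n-k$ (so $m$ is determined by $l$), from which the stated inequalities follow. Two small points deserve a sentence in a final write-up: the homogeneity of the unique copy of $V_{m,l}$ uses that the homogeneous components of $\mathcal{O}(S_q^{2N-1})$ are themselves subcomodules together with the multiplicity-one statement of Proposition \ref{7.2}; and your proof of (ii) invokes (i) for the pair $(m,l)$, so (i) must be established first --- which it is, since your proof of (i) does not use (ii), so there is no circularity.
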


\begin{proof}
By Proposition \ref{7.2}, it follows that any $\lambda \in \Lambda^{\bullet (n,k)}$ must be of the form $\lambda_{m,l}=(m+l,l,\cdots,l,0)$ and has multiplicity one. It is known (see e.g., \cite{MR1358358}*{page 326, Proposition 10.1.16}) that any $\lambda \in \Lambda^{\otimes (n,k)}$ is dominated by $\lambda_{n,k}=(n+k,k,\cdots,k,0)$ which is equivalent to $l \leq k$ and $m+l \leq n+k$. ii) follows immediately from these remarks.\\

For i), let us first remark that $\mathcal{O}(SU_q(N))$, hence $\mathcal{O}(S_q^{2N-1})$ is an integral domain (see \cite{MR1614943}*{page 98}). Now, if $v$ and $\overline{v}$ are the primitive vectors for $V_{1,0}$ and $V_{0,1}$, respectively, then $v^n\overline{v}^k \in W^{\bullet (n,k)}$ is nonzero and belongs to the weight space corresponding to $\lambda_{n,k}$, which follows from the definition of weight spaces as in \cite{MR1358358}*{page 324} and the fact that each $K_i$ is group-like (\cite{MR1358358}*{page 281}, so that $K_i(xy)=(K_ix)(K_iy)).$
\end{proof}

The following will be useful in constructing a non-commutative structure on the sphere.

\begin{prop}\label{7.4} \cite{MR2458039}*{page 35}
Let $\Gamma_0$ be the set of all GT tableaux $\textbf{r}^{nk}$ given by 
\[r_{ij}^{nk}=
\begin{cases}
n+k & \text{if} \quad i=j=1\\
0 & \text{if} \quad i=1, j= N\\
k & \text{otherwise},
\end{cases}
\] for some $n,k \in \mathbb{N}$. Let $\Gamma_0^{nk}$ be the set of all GT tableaux with top row $(n+k,k,k,\cdots,k,0)$. Then the family of vectors \[\{e_{\textbf{r}^{nk},\textbf{s}} \mid n,k \in \mathbb{N}, \textbf{s} \in \Gamma^{nk}_0\}\] form a complete orthonormal basis for $L_2(S_q^{2N-1})$.
\end{prop}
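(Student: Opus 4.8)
The plan is to deduce the statement from the Peter--Weyl orthogonality relations for the CQG algebra $\mathcal{O}(SU_q(N))$ together with the multiplicity-one decomposition already recorded in Proposition \ref{7.2}. Since $\mathcal{O}(SU_q(N))$ is a CQG algebra with faithful Haar functional $\textbf{h}$ (Theorem \ref{4.11}, Definition \ref{4.10}), its matrix coefficients furnish an orthogonal basis of $L_2(SU_q(N))$: writing $e_{\textbf{r},\textbf{s}}$ for the coefficient attached to a row GT tableau $\textbf{r}$ and a column GT tableau $\textbf{s}$ sharing top row $\lambda$, coefficients belonging to inequivalent irreducibles are orthogonal, and after normalizing by the appropriate $q$-weights one obtains an orthonormal system. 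The crucial structural fact I would use is that the right regular representation acts only on the column index: $\Delta(e_{\textbf{r},\textbf{s}})=\sum_{\textbf{t}} e_{\textbf{r},\textbf{t}}\otimes e_{\textbf{t},\textbf{s}}$, so it fixes both $\lambda$ and the row $\textbf{r}$ while letting $\textbf{t}$ range over a basis of $H_{\lambda}$.

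Combining this with Proposition \ref{7.2}, the closed subspace $L_2(S_q^{2N-1})$ is the right-comodule direct sum $\bigoplus_{n,k} V_{n,k}$, each irreducible with top row $\lambda_{n,k}=(n+k,k,\dots,k,0)$ occurring exactly once. Because the right regular representation moves only the column index, the single copy of $V_{n,k}$ inside $L_2(S_q^{2N-1})$ must be spanned by vectors $e_{\textbf{r},\textbf{s}}$ with one fixed row tableau $\textbf{r}$ and $\textbf{s}$ ranging over the full index set $\Gamma_0^{nk}$ of $H_{\lambda_{n,k}}$; the multiplicity-one statement is precisely what guarantees that a single row survives rather than several. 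Orthonormality of $\{e_{\textbf{r},\textbf{s}}:\textbf{s}\in\Gamma_0^{nk}\}$ and orthogonality across distinct $(n,k)$ are then immediate from the Peter--Weyl relations, while completeness is exactly the content of Proposition \ref{7.2}. This reduces the proof to identifying the surviving row with the distinguished tableau $\textbf{r}^{nk}$.

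Since the row index is constant on the entire copy of $V_{n,k}$, it suffices to compute it on the highest-weight vector, which by the proof of Proposition \ref{7.3} is $z_1^{n}(z_N^*)^{k}=(u^1_1)^{n}\bigl(S(u^N_1)\bigr)^{k}\in W^{\bullet(n,k)}$. I would read off the left weight of this element under the Cartan part of $SU_q(N)$, equivalently its behaviour under the residual coaction of the $SU_q(N-1)$ sitting in the lower-right corner that realizes the sphere as a homogeneous space. As $z_1=u^1_1$ carries the highest weight $(1,0,\dots,0)$ of $V_{1,0}$ and $z_N^*=S(u^N_1)$ the highest weight $(1,\dots,1,0)$ of $V_{0,1}$, their product has left weight $(n,k,\dots,k)$ and is annihilated by the raising operators of the lower-corner $SU_q(N-1)$. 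The unique GT pattern with top row $\lambda_{n,k}$ whose lower rows are all constant equal to $k$ is exactly $\textbf{r}^{nk}$, and a direct check that its row-sum differences reproduce the weight $(n,k,\dots,k)$ confirms the match.

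The main obstacle is this last identification: translating the algebraic description $z_1^{n}(z_N^*)^{k}$ of the distinguished vector into its GT label requires the explicit branching rule from $SU_q(N)$ to $SU_q(N-1)$ and the standard but notation-heavy dictionary between weight vectors and GT tableaux. Once the invariance under the stabilizer subgroup pins down the lower part of the pattern as the all-$k$ interlacing array, everything else is a formal consequence of the Peter--Weyl orthogonality relations and Proposition \ref{7.2}.
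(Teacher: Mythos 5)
The paper itself offers no proof of Proposition \ref{7.4}: it is imported verbatim from Chakraborty--Pal \cite{MR2458039}, so there is no in-text argument to compare yours against. Your sketch follows what is essentially the standard (and the cited) route --- Peter--Weyl for the CQG algebra $\mathcal{O}(SU_q(N))$, the multiplicity-one decomposition of Proposition \ref{7.2}, and identification of the surviving row label --- and it is correct in outline, but one step is asserted more strongly than you justify it. Multiplicity one only tells you that the unique copy of $V_{n,k}$ inside $L_2(S_q^{2N-1})$ corresponds to a single \emph{line} in the row space, i.e.\ it is spanned by $\bigl\{\sum_{\textbf{r}}c_{\textbf{r}}e_{\textbf{r},\textbf{s}} : \textbf{s}\in\Gamma_0^{nk}\bigr\}$ for one fixed coefficient vector $(c_{\textbf{r}})$; it does not by itself force that line to be a coordinate line, which is what ``a single row survives'' claims. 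What actually closes this is the ingredient you bring in only at the end: every element of $\mathcal{O}(S_q^{2N-1})\subset\mathcal{O}(SU_q(N))$ is invariant under the left coaction of the lower-right corner copy of $SU_q(N-1)$ (equivalently, transforms under $U_q(N-1)$ by the $k$-th power of the quantum determinant), the Gelfand--Tsetlin basis diagonalizes restriction along the subgroup chain, and the subspace of $H_{\lambda_{n,k}}$ transforming by that character is one-dimensional and spanned by the GT basis vector whose second row is constant; interlacing with the top row $(n+k,k,\dots,k,0)$ then forces that constant, and hence every lower row, to equal $k$, which is exactly $\textbf{r}^{nk}$. So the branching argument is not merely the ``last identification'' --- it is also what upgrades ``a single line'' to ``a single GT row''. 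Two smaller points to make explicit: orthogonality of coefficients with distinct row labels within one irreducible requires that the modular ($F$-) matrix is diagonal in the GT basis, which holds here; and the weight computation on $z_1^n(z_N^*)^k$ alone would not pin down $\textbf{r}^{nk}$ (weights can have multiplicity), so you genuinely need the invariance under the full corner subgroup, not just the Cartan part. With these points spelled out your proposal is a complete proof.
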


We recall the following definition.

\begin{defn}\label{7.5}\citelist{\cite{MR1482228}*{page 93} \cite{MR1303779}}
A spectral triple $(A,H,D)$ of compact type is given by a unital $\ast$-algebra with a faithful representation $\pi : A \rightarrow B(H)$ on the Hilbert space $H$ together with a self-adjoint operator $D=D^*$ on $H$ with the following properties:
	
\begin{enumerate}[i)]
\item[i)] The resolvent $(D-\lambda)^{-1}$, $\lambda \not \in \mathbb{R}$, is a compact operator on $H$;
\item[ii)] $[D,a]:= D\pi(a)-\pi(a)D \in B(H)$, for any $a \in A$.
\end{enumerate}
\end{defn}

In what follows, by a spectral triple, we mean a spectral triple of compact type. Let us now put a non-commutative structure on the quantum sphere.

\begin{thm}\label{7.6}\cite{MR2458039}*{page 39}
Let $A=\mathcal{O}(S_q^{2N-1})$ and $H$ be $L_2(S_q^{2N-1})$. Take $\pi$ to be the inclusion. Finally, define the operator $D : e_{\textbf{\textbf{r},\textbf{s}}} \mapsto d(\textbf{r})e_{\textbf{\textbf{r},\textbf{s}}}$ on $L_2(S_q^{2N-1})$ where the $d(\textbf{r})$'s are given by \[d(\textbf{r}^{nk})=
\begin{cases}
-k & \text{if} \quad n=0\\
n+k & \text{if} \quad n > 0.
\end{cases}\] Then $(A,H,D)$, as constructed above, is a spectral triple on $\mathcal{O}(S_q^{2N-1})$.
\end{thm}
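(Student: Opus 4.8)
The plan is to verify the two defining properties of a spectral triple from Definition \ref{7.5} directly from the explicit description of $D$. Faithfulness of $\pi$ is immediate: since $1 \in L_2(S_q^{2N-1})$ and $a \cdot 1 = a$, the left multiplication representation of $A$ on $L_2(S_q^{2N-1})$ is faithful. For self-adjointness and the compact resolvent condition (i), I would use that $D$ is diagonal in the complete orthonormal basis $\{e_{\textbf{r}^{nk},\textbf{s}}\}$ of Proposition \ref{7.4}, acting by the real scalar $d(\textbf{r}^{nk})$ on the whole block $V_{n,k}$ (which occurs with multiplicity one by Proposition \ref{7.2}). Hence $D$ is essentially self-adjoint on the finite linear span of the basis. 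For compactness of the resolvent it suffices to check that for every $M>0$ only finitely many basis vectors satisfy $|d(\textbf{r}^{nk})| \leq M$: if $n>0$ this forces $n+k \leq M$, and if $n=0$ it forces $k \leq M$, so in either case only finitely many pairs $(n,k)$ arise, each contributing the finite-dimensional block $V_{n,k}$. Thus $|d(\textbf{r}^{nk})| \to \infty$ with finite multiplicities and $(D-\lambda)^{-1}$ is compact.

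The substance of the proof is condition (ii), the boundedness of $[D,a]$ for $a \in A$. By the Leibniz rule $[D,ab]=[D,a]\,b+a\,[D,b]$ and the fact that every element of $A$ acts as a bounded operator on $L_2(S_q^{2N-1})$, it is enough to bound $[D,z_i]$ and $[D,z_i^*]$; moreover the latter is the adjoint of the former, so I would concentrate on $z_i$. Write $P_{n,k}$ for the orthogonal projection onto the block $V_{n,k}$. Since each $z_i$ lies in $V_{1,0}=V$, multiplication by $z_i$ is a comodule map $V_{1,0} \otimes V_{n,k} \to L_2(S_q^{2N-1})$, and the fusion analysis underlying Proposition \ref{7.3} shows that within $L_2(S_q^{2N-1})$ the only blocks reached from $V_{n,k}$ are $V_{n+1,k}$ and $V_{n,k-1}$. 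Consequently $[D,z_i]$ is block-structured along these transitions, acting on the $V_{n,k}$-component as multiplication by the spectral gaps $d(\textbf{r}^{(n+1)k})-d(\textbf{r}^{nk})$ and $d(\textbf{r}^{n(k-1)})-d(\textbf{r}^{nk})$ times the corresponding pieces $P_{n+1,k}\,z_i\,P_{n,k}$ and $P_{n,k-1}\,z_i\,P_{n,k}$.

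A direct inspection of the eigenvalues $d$ shows that all of these gaps equal $\pm 1$ \emph{except} the single family of boundary transitions $V_{0,k} \to V_{1,k}$, for which the gap is $d(\textbf{r}^{1k})-d(\textbf{r}^{0k})=(1+k)-(-k)=1+2k$, growing without bound. The crux of the argument --- and the main obstacle --- is therefore to show that the offending component $P_{1,k}\,z_i\,P_{0,k}$ has operator norm of order $k^{-1}$, so that the product $(1+2k)\,\lVert P_{1,k}\,z_i\,P_{0,k}\rVert$ stays bounded in $k$. I would establish this using the explicit action of the generators $u^1_i$ on the Gelfand--Tsetlin basis $e_{\textbf{r},\textbf{s}}$ recorded in \cite{MR2458039}: multiplication by $z_i$ moves almost all of the mass of $V_{0,k}$ into the lower block $V_{0,k-1}$, and the $q$-deformed Clebsch--Gordan / Beta-type coefficients governing the residual $V_{1,k}$-component decay like $k^{-1}$ (indeed faster, geometrically in $q$, when $0<q<1$). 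Granting this estimate, $[D,z_i]$ is a norm-bounded sum of a uniformly bounded interior part (gaps $\pm 1$) and the boundary part of bounded norm, so $[D,z_i] \in B(L_2(S_q^{2N-1}))$; by the reduction above $[D,a]$ is then bounded for all $a \in A$, completing the verification that $(A,H,D)$ is a spectral triple.
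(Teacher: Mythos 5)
There is nothing in the paper to compare your argument against: Theorem \ref{7.6} is quoted verbatim from Chakraborty--Pal \cite{MR2458039} and the paper supplies no proof of it. Judged on its own terms, your outline is the right one and is essentially the argument of the cited source. The treatment of condition (i) is complete: $D$ is diagonal over the orthonormal basis of Proposition \ref{7.4} with real eigenvalues $d(\textbf{r}^{nk})$ whose level sets are finite unions of the finite-dimensional blocks $V_{n,k}$, so $D$ is self-adjoint with compact resolvent. The reduction of (ii) to the generators via Leibniz and adjoints is fine, and your block analysis is correct: by the Pieri rule together with Proposition \ref{7.2}, multiplication by $z_i$ sends $V_{n,k}$ only into $V_{n+1,k}\oplus V_{n,k-1}$ inside $L_2(S_q^{2N-1})$, the maps $\sum_{n,k}P_{n+1,k}z_iP_{n,k}$ and $\sum_{n,k}P_{n,k-1}z_iP_{n,k}$ each have norm $\sup_{n,k}\lVert P'z_iP_{n,k}\rVert\le\lVert z_i\rVert$ because sources and targets are mutually orthogonal, and all spectral gaps are $\pm 1$ except on the family $V_{0,k}\to V_{1,k}$, where the gap is $1+2k$. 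You have therefore located the entire analytic content of the theorem in the single estimate $\lVert P_{1,k}\,z_i\,P_{0,k}\rVert=O\bigl((1+2k)^{-1}\bigr)$.

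That estimate, however, is asserted rather than proven, so as a self-contained argument your proposal has a genuine gap at exactly the point where all the work lies. The estimate is true --- it follows from the explicit matrix entries of $u^1_j$ on the Gelfand--Tsetlin basis computed in \cite{MR2458039}, where the coefficient of the transition raising $r_{11}$ from $k$ to $k+1$ in the block $\lambda_{0,k}$ decays like $q^{k}$ for $0<q<1$ (and one can check an $O(1/k)$ bound survives at $q=1$ by an equivariance/Schur argument, since $P_{1,k}\mu|_{V_{1,0}\otimes V_{0,k}}$ is determined up to scalar by a single matrix element) --- but citing ``$q$-deformed Clebsch--Gordan coefficients decay like $k^{-1}$'' is naming the answer, not deriving it. To close the gap you would need to either reproduce the relevant lemma of \cite{MR2458039} on the action of the generators on $e_{\textbf{r},\textbf{s}}$, or compute $\lVert P_{1,k}z_iP_{0,k}\rVert$ directly from the Haar-state norms of monomials (e.g.\ via $\lVert P_{1,k}z_iv\rVert^2=\lVert z_iv\rVert^2-\lVert P_{0,k-1}z_iv\rVert^2$ for a suitable spanning set of $v\in V_{0,k}$). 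Everything else in your write-up is in order.
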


Now let us recall the notion of the quantum isometry group of a non-commutative manifold.

\begin{defn}\label{7.7} \cite{MR2555012}*{page 2538}
A quantum family of orientation preserving isometries for the spectral triple $(A,H,D)$ is given by a pair $(S,u)$ where $S$ is a unital $C^*$-algebra and $u$ is a linear map from $H$ to $H \otimes S$ such that $\tilde{u}$ given by $\tilde{u}(\xi \otimes b)=u(\xi)(1 \otimes b)$, extends to a unitary element of $M(\mathcal{K}(H) \otimes S)$ satisfying 
\begin{enumerate}[i)]
\item for every state $\phi$ on $S$, $u_{\phi}D=Du_{\phi}$ where $u_{\phi}=(id \otimes \phi) \circ \tilde{u}$;
\item $(id \otimes \phi) \circ {\rm ad}_u(a) \in A''$, for all $a \in A$ and for all state $\phi$ on $S$, where ${\rm ad}_u(x)=\tilde{u}(x \otimes 1)\tilde{u}^*$ for $x \in B(H)$.
\end{enumerate}

In case the $C^*$-algebra $S$ has a coproduct $\Delta$ such that $(S,\Delta)$ is a compact quantum group and $U$ is a unitary representation of $(S,\Delta)$ on $H$, it is said that $(S,\Delta)$ acts by orientation preserving isometries on the spectral triple.
\end{defn}

One considers the category $\textbf{Q}(A,H,D)$ whose objects are triples $(S,\Delta,u)$, where $(S,\Delta)$ is a compact quantum group acting by orientation preserving isometries on the given spectral triple, with $u$ being the corresponding unitary representation. The morphisms are homomorphisms of compact quantum groups that also ``intertwines" (see \cite{MR2555012}) the unitary representations. If a universal object $(\widetilde{S_0}, \Delta_0, u_0)$ (say) exists in this category then the $C^*$-algebra $S_0$ generated by $\{ (t_{\xi,\eta} \otimes {\rm id})({\rm ad}_{u_0}(a)),~\xi, \eta \in H,~q \in A\}$ is a compact quantum group and it is called the quantum group of orientation preserving isometries. Here, 
$t_{\xi,\eta}:  B(H) \rightarrow {\mathbb C}$ is given by $t_{\xi,\eta}(X)=<\xi, X\eta>$.\\

We record some results from \cite{MR2555012} for the reader's convenience.

\begin{thm}\label{7.8}\cite{MR2555012}*{page 2547}
Let $(A,H,D)$ be a spectral triple and assume that $D$ has a one dimensional eigenspace spanned by a unit vector $\xi$, which is cyclic and separating for the algebra $A$. Moreover, assume that each eigenvector of $D$ belongs to the dense subspace $A\xi$ of $H$. Then there is a universal object $(\widetilde{S_0},\Delta_0,u_0)$ in the category $\textbf{Q}(A,H,D)$.
\end{thm}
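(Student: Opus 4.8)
The plan is to reduce the existence of a universal object to a universal $C^*$-algebra construction built on the spectral decomposition of $D$, and then to upgrade that universal object to a compact quantum group by exploiting its own universal property. First I would use the compactness of the resolvent to write $H=\bigoplus_{n\ge 0}V_n$ as an orthogonal direct sum of finite-dimensional eigenspaces, with $V_0=\mathbb{C}\xi$ the given one-dimensional eigenspace. For any object $(S,\Delta,u)$ of $\mathbf{Q}(A,H,D)$, condition i) of Definition \ref{7.7} (that $u_\phi$ commutes with $D$ for every state $\phi$) forces $\tilde{u}$ to commute with $D\otimes 1$, hence to preserve each $V_n\otimes S$. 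Fixing an orthonormal basis of each $V_n$, the restriction of $u$ is then a finite-dimensional unitary corepresentation encoded by a unitary matrix $U^{(n)}=((u^{(n)}_{ij}))$ over $S$, while on the one-dimensional space $V_0$ one gets $u(\xi)=\xi\otimes c$ with $c$ group-like and unitary.

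The next step is to turn the analytic condition ii) of Definition \ref{7.7} into algebraic relations among the entries $u^{(n)}_{ij}$, and this is where the hypotheses are decisive. Since $\xi$ is cyclic and separating and every eigenvector of $D$ lies in $A\xi$, I can write each basis vector of $V_n$ uniquely as $a_{nj}\xi$ with $a_{nj}\in A$, so that the action of $\mathrm{ad}_u$ on $A$ is completely determined by the action of $u$ on eigenvectors, via $\mathrm{ad}_u(a_{nj})(\xi\otimes 1)=u(a_{nj}\xi)(1\otimes c^*)=\sum_k a_{nk}\xi\otimes u^{(n)}_{kj}c^*$. Consequently condition ii) is equivalent to the requirement that the assignment $a_{nj}\mapsto\sum_k a_{nk}\otimes u^{(n)}_{kj}c^*$ extend to a genuine $\ast$-homomorphism $\alpha\colon A\to A''\otimes S$, i.e. that it respect all defining relations of $A$. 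This is a set of honest $C^*$-relations on the norm-bounded (because unitary) generators $u^{(n)}_{ij}$ together with $c$.

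I would then let $\widetilde{S_0}$ be the universal unital $C^*$-algebra generated by symbols $u^{(n)}_{ij}$ and $c$ subject to unitarity of each $U^{(n)}$, group-likeness of $c$, and the relations just described; this universal object exists precisely because the generators are bounded. By construction every object of $\mathbf{Q}(A,H,D)$ receives a canonical $\ast$-homomorphism from $\widetilde{S_0}$ intertwining the coactions, so $\widetilde{S_0}$ is universal as a \emph{quantum family} of orientation preserving isometries. It remains to equip it with a compact quantum group structure. The comultiplication is forced on generators by $(\mathrm{id}\otimes\Delta_0)\tilde{u}_0=(\tilde{u}_0)_{(12)}(\tilde{u}_0)_{(13)}$, giving $\Delta_0(u^{(n)}_{ij})=\sum_k u^{(n)}_{ik}\otimes u^{(n)}_{kj}$ and $\Delta_0(c)=c\otimes c$. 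To see that this descends to a $\ast$-homomorphism $\widetilde{S_0}\to\widetilde{S_0}\otimes\widetilde{S_0}$, I would verify that $(\widetilde{S_0}\otimes\widetilde{S_0},\,(\tilde{u}_0)_{(12)}(\tilde{u}_0)_{(13)})$ is again a quantum family satisfying i) and ii), and then invoke the universal property to produce $\Delta_0$; coassociativity and the density (cancellation) conditions of Definition \ref{4.9} follow from the uniqueness clause of the universal property together with the fact that each $U^{(n)}$ is a unitary corepresentation, so that $u_0$ is a unitary representation and $(\widetilde{S_0},\Delta_0,u_0)$ lands in $\mathbf{Q}(A,H,D)$ as the desired universal object.

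The step I expect to be the main obstacle is exactly the passage from a universal quantum \emph{family} to a compact quantum \emph{group}: showing that the defining relations of $\widetilde{S_0}$ are compatible with the prospective comultiplication, equivalently that the product object again satisfies condition ii). This is where the rigidity supplied by the hypotheses—a one-dimensional eigenspace spanned by a cyclic and separating vector, with every eigenvector contained in $A\xi$—is indispensable, since it lets me reconstruct the coaction on $A$ from its values on eigenvectors and thereby phrase ii) as a condition closed under comultiplication. The routine verifications (that $\alpha$ is a well-defined coaction, and that the various density and cancellation conditions hold) I would carry out by standard arguments in the spirit of \cite{MR2555012}.
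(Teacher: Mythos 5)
First, a point of order: the paper you are reading does not prove Theorem \ref{7.8} at all; it is quoted from \cite{MR2555012} (where it appears as Theorem 2.23), so the only thing to compare your argument with is the proof given there. That proof runs differently from yours: since $\tilde{u}$ commutes with $D\otimes 1$, it restricts on each finite-dimensional eigenspace $V_n$ to a unitary $d_n\times d_n$ matrix over $S$, whence every object of $\textbf{Q}(A,H,D)$ arises as a quotient of the free product $\mathcal{U}=\ast_n A_u(d_n)$ of Wang--Van Daele universal unitary quantum groups by a Woronowicz $C^*$-ideal $I_S$; one then sets $I_0=\bigcap_S I_S$ and uses the cyclic separating vector $\xi$ together with the hypothesis that every eigenvector lies in $A\xi$ to check that $\mathcal{U}/I_0$ still satisfies condition ii) of Definition \ref{7.7}. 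Being a quotient of a compact quantum group by a Woronowicz ideal, $\mathcal{U}/I_0$ is automatically a compact quantum group, so the ``family to group'' upgrade that you single out as the main obstacle never has to be confronted in the form you describe.

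Your route has two genuine gaps. First, condition ii) does not reduce to ``honest $C^*$-relations'' among the $u^{(n)}_{ij}$ and $c$: it asserts that certain slices of $\mathrm{ad}_u(a)$ lie in the von Neumann algebra $A''$, and to encode multiplicativity of $a_{nj}\mapsto\sum_k a_{nk}\otimes u^{(n)}_{kj}c^*$ you must re-expand products $a_{nk}a_{ml}\xi$ in eigenvectors of $D$, which for a general spectral triple is an infinite sum converging only in the Hilbert space norm. The resulting ``relations'' are therefore not $\ast$-polynomial identities in the generators, and the existence of a universal $C^*$-algebra subject to them is precisely the point at issue rather than a consequence of boundedness of the generators; the intersection-of-ideals argument is exactly the device that replaces this step. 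Second, the universal $C^*$-algebra generated by entries of unitaries $U^{(n)}$ with the coproduct you write down need not be a compact quantum group: unless one also forces invertibility of the conjugate matrices $\overline{U^{(n)}}$ (which is built into $A_u(d_n)$ by fiat), one lands in Brown-type algebras for which the density conditions of Definition \ref{4.9} fail, and your appeal to ``the uniqueness clause of the universal property'' does not supply them. (A smaller slip: you call $c$ group-like before any coproduct exists.) The overall strategy --- localize at eigenspaces, use $\xi$ to transport condition ii) to the eigenvector level, then bootstrap a coproduct from universality --- is sound and close in spirit to \cite{MR2555012}, but these two points are where the real work lies and your sketch does not close them.
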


For the spectral triple in Theorem \ref{7.6}, the cyclic separating vector $\xi$ is $1_{\mathcal{O}(S_q^{2N-1})}$.

\begin{defn}\label{7.9}\cite{MR2555012}*{page 2548}
Let $(A,H,D)$ be the spectral triple in Theorem \ref{7.6}. Let $\widehat{\textbf{Q}}(A,H,D)$ be the category with objects $(S,\alpha)$ where $S$ is a compact quantum group with an action on $A$ such that
\begin{enumerate}[i)]
\item $\alpha$ is $\textbf{h}$ preserving;
\item $\alpha$ commutes with $\widehat{D}$, i.e., $\alpha \widehat{D}=(\widehat{D} \otimes id)\alpha$, where $\widehat{D}$ is the operator $A \rightarrow A$ given by $\widehat{D}(a)\xi=D(a\xi)$, $\xi$ as in Theorem \ref{7.8} which is $1_{\mathcal{O}(S_q^{2N-1})}$ in our case.
\end{enumerate}
\end{defn}

Note that the eigenspaces of $\widehat{D}$ and $D$ are in one-one correspondence. In fact, eigenspaces of $\widehat{D}$ are of the form $\{a \in A \mid a\xi \in V_{\lambda}\}$, where $V_{\lambda}$ is the finite dimensional eigenspace of $D$ with respect to eigenvalue $\lambda$.

\begin{prop}\label{7.10}\cite{MR2555012}*{page 2549}
There exists a universal object $S$ in the category $\widehat{\textbf{Q}}(A,H,D)$ and it is isomorphic to the $C^*$-subalgebra $S_0$ of $\widetilde{S_0}$ obtained in Theorem \ref{7.8}.
\end{prop}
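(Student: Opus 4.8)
The plan is to realize the universal object of $\widehat{\textbf{Q}}(A,H,D)$ as a canonical subobject of the universal object of $\textbf{Q}(A,H,D)$ furnished by Theorem \ref{7.8}, by setting up a correspondence between the objects of the two categories. First I would verify that the spectral triple of Theorem \ref{7.6} satisfies the hypotheses of Theorem \ref{7.8}. The vector $\xi=1_{\mathcal{O}(S_q^{2N-1})}$ spans the eigenspace of $D$ for the eigenvalue $0$, and this eigenspace is one-dimensional because $d(\textbf{r}^{nk})=-k<0$ whenever $n=0,\,k>0$ and $d(\textbf{r}^{nk})=n+k>0$ whenever $n>0$; moreover $\xi$ is cyclic since $A\xi$ is dense in $L_2(S_q^{2N-1})$, it is separating because the Haar functional is faithful, and every eigenvector of $D$ lies in $A\xi$ by Proposition \ref{7.4}. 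Thus the universal object $(\widetilde{S_0},\Delta_0,u_0)$ of $\textbf{Q}(A,H,D)$ is available.

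Next I would show that the pair $(S_0,\alpha_0)$, where $S_0\subseteq\widetilde{S_0}$ is the $C^*$-subalgebra generated by the matrix coefficients of $\mathrm{ad}_{u_0}$ and $\alpha_0=\mathrm{ad}_{u_0}|_A$, is an object of $\widehat{\textbf{Q}}(A,H,D)$. Since $u_0$ commutes with $D$ and the $0$-eigenspace of $D$ is one-dimensional, unitarity forces $u_0(\xi)=\xi\otimes1$; together with $\textbf{h}(a)=\langle\xi,a\xi\rangle$ this yields that $\alpha_0$ preserves $\textbf{h}$, verifying condition (i) of Definition \ref{7.9}. The commutation $\alpha_0\widehat{D}=(\widehat{D}\otimes\mathrm{id})\alpha_0$ is the translation of $u_0D=Du_0$ through the identification of the eigenspaces of $\widehat{D}$ and $D$ recorded after Definition \ref{7.9}, giving condition (ii). Finally, condition (ii) of Definition \ref{7.7} guarantees that $\alpha_0$ takes values in $A''\otimes S_0$, and the cyclic-separating property of $\xi$ lets me replace $A''$ by $A$, so that $\alpha_0$ is a genuine action on $A$.

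For universality I would argue in the reverse direction. Given any $(S,\alpha)\in\widehat{\textbf{Q}}(A,H,D)$, the $\textbf{h}$-invariance of $\alpha$ allows the prescription $a\xi\mapsto\alpha(a)$ to extend to a unitary representation $U$ of $(S,\Delta)$ on $H=L_2(S_q^{2N-1})$, while $\alpha(1)=1\otimes1$ forces $U(\xi)=\xi\otimes1$. The hypothesis $\alpha\widehat{D}=(\widehat{D}\otimes\mathrm{id})\alpha$ then shows $U$ commutes with $D$, and $\mathrm{ad}_U|_A=\alpha$ takes values in $A\subseteq A''$, so $(S,\Delta,U)$ is an object of $\textbf{Q}(A,H,D)$. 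The universal morphism $\pi\colon\widetilde{S_0}\to S$ from Theorem \ref{7.8} intertwines $u_0$ with $U$; restricting $\pi$ to $S_0$ and using $(\mathrm{id}\otimes\pi)\alpha_0=\alpha$ on $A$ produces a morphism $S_0\to S$ intertwining $\alpha_0$ and $\alpha$, which is unique because $S_0$ is generated by the coefficients of $\alpha_0$.

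The main obstacle I expect lies in the correspondence of the third paragraph: showing that $\textbf{h}$-preservation really does let the algebraic action extend to a bona fide unitary representation on the GNS space and that commutation with $\widehat{D}$ passes to commutation with $D$ at the Hilbert-space level. One must also check that the restriction of the universal morphism maps $S_0$ into $S$ (and not merely into its multiplier algebra) while respecting the compact quantum group structure; it is precisely here that identifying $S_0$ as the subalgebra generated by the matrix coefficients of $\mathrm{ad}_{u_0}$, rather than all of $\widetilde{S_0}$, becomes essential.
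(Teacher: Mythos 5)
The paper itself offers no proof of Proposition \ref{7.10}: it is recorded verbatim from \cite{MR2555012} (page 2549), so there is no in-text argument to compare against. Your reconstruction follows the same route as that source --- verify the hypotheses of Theorem \ref{7.8} for the spectral triple of Theorem \ref{7.6}, check that $\alpha_0=\mathrm{ad}_{u_0}|_A$ makes $S_0$ an object of $\widehat{\textbf{Q}}(A,H,D)$, and then, for an arbitrary $(S,\alpha)$, use $\textbf{h}$-invariance to build a unitary representation on the GNS space commuting with $D$ so that the universal morphism out of $\widetilde{S_0}$ restricts to $S_0\to S$ --- and it is essentially sound. One small correction: commutation with $D$ plus unitarity only force $u_0(\xi)=\xi\otimes v$ for some group-like unitary $v\in\widetilde{S_0}$, not $v=1$; the computation still closes, since $(t_{\xi,\xi}\otimes\mathrm{id})(\mathrm{ad}_{u_0}(a))=\textbf{h}(a)\,vv^{*}=\textbf{h}(a)1$, so $\textbf{h}$-preservation and everything downstream are unaffected. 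The two points you flag as the main obstacles (passing from the algebraic action to a bona fide unitary on $L_2$, and landing in $S_0$ rather than all of $\widetilde{S_0}$) are indeed where the real work sits in \cite{MR2555012}, and your outline handles them the way that reference does.
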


We conclude by describing the quantum isometry group of the sphere. This generalizes \cite{MR2555012}*{Theorem 4.13, page 2559}.

\begin{lem}\label{7.11}
Given a compact quantum group $S$ with an action $\alpha$ on $A$, the following are equivalent:
\begin{enumerate}[i)]
\item $(S,\alpha)$ is an object of the category $\widehat{\textbf{Q}}(A,H,D)$, $(A,H,D)$ as in Theorem \ref{7.6};
\item $\alpha$ is linear (meaning it preserves $V$ as in Theorem \ref{6.6}) and preserves $\textbf{h}$;
\item $\alpha$ preserves each irreducible $V_{n,k}$, occurring in Proposition \ref{7.2}.
\end{enumerate}
\end{lem}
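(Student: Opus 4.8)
The plan is to prove Lemma~\ref{7.11} as a cycle of implications, say (i)$\Rightarrow$(iii)$\Rightarrow$(ii)$\Rightarrow$(i), exploiting the fact that the eigenspaces of $\widehat{D}$ are precisely the spaces $W_\lambda = \{a \in A \mid a\xi \in V_\lambda\}$, and that by the spectral structure of $D$ in Theorem~\ref{7.6} these are nothing but the irreducibles $V_{n,k}$ from Proposition~\ref{7.2} (grouped by eigenvalue). First I would show (i)$\Rightarrow$(iii): if $(S,\alpha)$ lies in $\widehat{\textbf{Q}}(A,H,D)$, then by Definition~\ref{7.9} the action $\alpha$ commutes with $\widehat{D}$, so $\alpha$ preserves each eigenspace of $\widehat{D}$; one must then check that these eigenspaces coincide with the $V_{n,k}$. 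From the formula for $d(\textbf{r}^{nk})$ in Theorem~\ref{7.6}, the eigenvalue $-k$ ($n=0$) corresponds to $V_{0,k}$ and the eigenvalue $n+k$ ($n>0$) is shared among those $V_{n',k'}$ with $n'+k' = n+k$, $n'>0$. So a priori an eigenspace of $\widehat{D}$ may be a direct sum of several irreducibles, and $\alpha$-invariance of the sum does not immediately give invariance of each summand.

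This is the step I expect to be the main obstacle, and the natural tool to resolve it is that $\alpha$ preserves $\textbf{h}$ (hence the induced inner product), so each $\widehat{D}$-eigenspace decomposes $\alpha$-invariantly into its irreducible $SU_q(N)$-isotypic pieces. Because each $V_{n,k}$ occurs with multiplicity exactly one in $L_2(S_q^{2N-1})$ (Proposition~\ref{7.2}) and the distinct $V_{n',k'}$ sharing a given eigenvalue are mutually inequivalent $SU_q(N)$-corepresentations, the $S$-coaction $\alpha$ must carry each $V_{n',k'}$ into a sum of $SU_q(N)$-isomorphic copies within the eigenspace; multiplicity one then forces $\alpha$ to preserve each $V_{n',k'}$ individually. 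Here I would invoke Proposition~\ref{4.12} (the $\textbf{h}$-preserving property makes orthogonal complements of subcomodules again subcomodules) together with Schur-type orthogonality to peel off the irreducibles one at a time.

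Next, (iii)$\Rightarrow$(ii) is immediate: if $\alpha$ preserves every $V_{n,k}$ then in particular it preserves $V = V_{1,0}$, which is exactly linearity in the sense of Theorem~\ref{6.6}, and $\textbf{h}$-preservation is part of the hypothesis of being in $\widehat{\textbf{Q}}$; alternatively, since (ii) only asks for preservation of $V$ and of $\textbf{h}$, I would just note that $V_{1,0} = V$ is one of the $V_{n,k}$ and that $\textbf{h}$-preservation has already been assumed throughout this equivalence (the objects of $\widehat{\textbf{Q}}$ carry $\textbf{h}$-preserving actions by Definition~\ref{7.9}, and in (ii) it is stated explicitly). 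Finally, for (ii)$\Rightarrow$(i) I would run the argument of Theorem~\ref{6.6} in reverse: an $\textbf{h}$-preserving action $\alpha$ that preserves $V$ factors through $\mathcal{O}(U_q(N))$ by Theorem~\ref{6.6}, and since the $U_q(N)$-coaction $\rho_u$ preserves each $V_{n,k}$ (these being $SU_q(N)$-subcomodules of $L_2(S_q^{2N-1})$ that remain subcomodules under $\rho_u$, as $SU_q(N) \to U_q(N)$ intertwines the coactions via $\pi$), it follows that $\alpha$ preserves each $V_{n,k}$ and hence each $\widehat{D}$-eigenspace, so $\alpha$ commutes with $\widehat{D}$ and $(S,\alpha)$ is an object of $\widehat{\textbf{Q}}(A,H,D)$. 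The bulk of the genuine work is concentrated in the multiplicity-one disentangling described above; the remaining implications are formal once that is in hand.
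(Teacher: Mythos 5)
Your cycle hinges on (i)$\Rightarrow$(iii), and the disentangling step you propose there does not work. The spaces $V_{n',k'}$ sharing a $\widehat{D}$-eigenvalue are irreducible and mutually inequivalent as $\mathcal{O}(SU_q(N))$-comodules, but $\alpha$ is a coaction of a \emph{different} quantum group $S$, and nothing in the hypotheses makes $\alpha$ commute with the $SU_q(N)$-coaction. So Schur orthogonality gives you no control: an $S$-coaction preserving the eigenspace $\bigoplus_{n'+k'=\lambda,\, n'>0}V_{n',k'}$ is free to mix the summands, multiplicity one notwithstanding. Likewise, preservation of $\textbf{h}$ (Proposition \ref{4.12}) only transfers $\alpha$-invariance from a subcomodule to its orthogonal complement; it does not make the $SU_q(N)$-isotypic decomposition of an $\alpha$-invariant subspace automatically $\alpha$-invariant. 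The paper sidesteps this entirely by proving the cheap implication (i)$\Rightarrow$(ii) first: the eigenvalue-$1$ eigenspace of $\widehat{D}$ is exactly $V=V_{1,0}$ (a single irreducible), so preservation of eigenspaces already gives linearity with no disentangling. The genuine work is then placed in (ii)$\Rightarrow$(iii), done by a double induction: $\alpha$ preserves $W^{\bullet(n,k)}$ because it is a $\ast$-homomorphism preserving $V_{1,0}$ and $V_{0,1}=V^*$, and Proposition \ref{7.3} (multiplicity one of $V_{n,k}$ in $W^{\bullet(n,k)}$, with the complement built from strictly smaller indices) combined with Proposition \ref{4.12} peels off $V_{n,k}$ inductively. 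It is the multiplicative structure of $A$, not representation-theoretic rigidity of the eigenspaces, that forces invariance of each irreducible.

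Your remaining arrows are essentially sound, and in fact they contain a legitimate repair: your (ii)$\Rightarrow$(i) argument, which factors $\alpha$ through $\mathcal{O}(U_q(N))$ via Theorem \ref{6.6} and then uses that $\rho_u$ preserves each $V_{n,k}$, actually proves (ii)$\Rightarrow$(iii) along the way. Reordering the cycle as (i)$\Rightarrow$(ii)$\Rightarrow$(iii)$\Rightarrow$(i), with the first arrow justified by the eigenvalue-$1$ observation above, would give a correct proof that differs from the paper's (it trades the double induction for an appeal to the universality theorem). If you take that route, you should still check two small points: that $\alpha$ restricted to $\mathcal{O}(S_q^{2N-1})$ really is a $\ast$-comodule algebra coaction of the dense Hopf $\ast$-algebra $\mathcal{S}$ with coefficients in $\mathcal{S}$ (this follows from invariance of the finite-dimensional $V$ and the homomorphism property), and that the $V_{n,k}$, defined as $SU_q(N)$-irreducibles, are genuinely $\rho_u$-invariant (this uses centrality of $\mathcal{D}_q$ and multiplicity one; it is not an immediate consequence of $(id\otimes\pi)\rho_u=\rho_{su}$).
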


\begin{proof}
i) $\implies$ ii): Since $\alpha$ commutes with $\widehat{D}$, it preserves the eigenspaces of $\widehat{D}$, in particular $V$, and by definition of the category $\widehat{\textbf{Q}}(A,H,D)$, it preserves $\textbf{h}$.\\

ii) $\implies$ iii): Recall the notation $W^{\bullet (n,k)}$ from the discussion below Proposition \ref{7.2}. Clearly, $\alpha$ preserves $W^{\bullet (n,0)}$ as it is a homomorphism and preserves $V_{1,0}$. It also preserves $W^{\bullet (0,k)}$ because it is a $\ast$-homomorphism and preserves $V_{0,1}=V_{1,0}^*$. Hence $\alpha$ preserves $W^{\bullet (n,k)}$. We will use this fact to show $\alpha$ preserves $V_{n,k}$ for all $n$ and $k$. Let $\textbf{P}(k)$ be the statement ``$\alpha$ preserves $V_{n,k}$ for all $n$". We now proceed to prove this statement for all $k$ by induction. We break the proof in several steps.\\

$\textbf{Step 1}$: We prove that $\textbf{P}(0)$ holds. Thus we need to show $\alpha$ preserves $V_{n,0}$ for all $n$. We use induction on $n$. Clearly, $\alpha$ preserves $V_{0,0}$. Next, we assume that $\alpha$ preserves $V_{m,0}$ for all $m < n$. By Proposition \ref{7.3}, each irreducible contained in $(V_{n,0})^{\perp}$ is of the form $V_{m,0}$ with $m < n$, which is preserved by $\alpha$. Hence $\alpha$ preserves $V_{n,0}$, by Proposition \ref{4.12}.\\

$\textbf{Step 2}$: Now we assume $\textbf{P}(l)$ holds for all $l < k$, i.e., $\alpha$ preserves $V_{n,l}$ for all $n$ and for all $l < k$. We have to prove that $\textbf{P}(k)$ holds, i.e., $\alpha$ preserves $V_{n,k}$ for all $n$. We use induction on $n$ (with fixed $k$) to prove this.\\

$\textbf{Step 2a}$: We prove that $\alpha$ preserves $V_{0,k}$. By Proposition \ref{7.3}, each irreducible occurring  in $(V_{0,k})^{\perp}$ is of the form $V_{0,l}$ with $l < k$. By assumption $\alpha$ preserves $V_{0,l}$ for all $l < k$. Hence, by Proposition \ref{4.12}, $\alpha$ preserves $V_{0,k}$.\\

$\textbf{Step 2b}$: Next we assume that $\alpha$ preserves $V_{m,k}$ for all $m < n$ and prove that it also preserves $V_{n,k}$ ($k$ fixed). By Proposition \ref{7.3}, each irreducible contained in $(V_{n,k})^{\perp}$ is of the form $V_{m,l}$ with $l < k$ or $V_{m,k}$ with $m < n$. $\alpha$ preserves $V_{m,l}$ with $l < k$ (the main induction hypothesis that $\textbf{P}(l)$ holds for all $l < k$) and $V_{m,k}$ with $m < n$, by the induction hypothesis at the beginning of the present step. Hence, by Proposition \ref{4.12}, $\alpha$ preserves $V_{n,k}$. This completes the induction on $n$, proving $\alpha$ preserves $V_{n,k}$ for all $n$.\\

So we have proved $\textbf{P}(0)$ holds and $\textbf{P}(k)$ holds assuming $\textbf{P}(l)$ holds for all $l < k$. This completes the induction on $k$, thus completing the proof of ii) $\implies$ iii).\\

iii) $\implies$ i): Condition iii) implies that $\alpha$ leaves each eigenspaces of $\widehat{D}$ invariant, hence, it commutes with $\widehat{D}$. Since $\textbf{h}(1)=1$ and $\ker(\textbf{h})$ is the span of all $V_{n,k}$ with $n+k \neq 0$, $\alpha$ preserves $\ker(\textbf{h})$. But then, $a-\textbf{h}(a)1 \in \ker(\textbf{h})$, so that $\alpha(a-\textbf{h}(a)1)=\alpha(a)-\textbf{h}(a)(1 \otimes 1) \in \ker(\textbf{h}) \otimes S$, implying $(\textbf{h} \otimes id)(\alpha(a))=\textbf{h}(a)1$. Hence, $\alpha$ preserves $\textbf{h}$.
\end{proof}

\begin{thm}\label{7.12}
The quantum group of orientation preserving isometries for the spectral triple in Theorem \ref{7.6} is the compact quantum group $U_q(N)$.
\end{thm}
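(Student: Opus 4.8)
The plan is to assemble Theorem \ref{7.12} from the machinery already developed, treating it essentially as a translation statement between the two categories that have been set up. The quantum group of orientation preserving isometries is, by definition (following the discussion after Definition \ref{7.7} and by Proposition \ref{7.10}), the universal object $S$ in the category $\widehat{\textbf{Q}}(A,H,D)$ for the spectral triple of Theorem \ref{7.6}. So the entire task reduces to identifying this universal object. First I would invoke the hypotheses of Theorem \ref{7.8}: the eigenvector $\xi = 1_{\mathcal{O}(S_q^{2N-1})}$ is cyclic and separating, it spans a one-dimensional eigenspace of $D$ (the eigenvalue-$0$ space, corresponding to $n=k=0$), and every eigenvector lies in $A\xi$ by construction of $D$ on the orthonormal basis of Proposition \ref{7.4}. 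This guarantees the universal object exists and that Proposition \ref{7.10} applies, so it suffices to determine the universal object of $\widehat{\textbf{Q}}(A,H,D)$.

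The crux is Lemma \ref{7.11}, which establishes that an action $\alpha$ of a compact quantum group $S$ lands in $\widehat{\textbf{Q}}(A,H,D)$ precisely when $\alpha$ is linear (preserves $V$) and preserves the Haar functional $\textbf{h}$. This is exactly the pair of hypotheses (i) and (ii) in Theorem \ref{6.6}. So the next step is to match the category $\widehat{\textbf{Q}}(A,H,D)$ with the category $\mathcal{C}$ of Theorem \ref{6.9}: an object of $\widehat{\textbf{Q}}(A,H,D)$ gives, by passing to the underlying dense Hopf $\ast$-algebra $\mathcal{S}$ and restricting $\alpha$ to the comodule-algebra coaction $\rho$ on $\mathcal{O}(S_q^{2N-1})$, exactly an object satisfying the hypotheses of Theorem \ref{6.6}; here I must check that linearity together with $\textbf{h}$-preservation delivers condition (ii) of Theorem \ref{6.6}, namely preservation of the induced inner product on $V$, which follows from Lemma \ref{6.3} (with $A = \mathcal{O}(SU_q(N))$, $B = \mathcal{O}(S_q^{2N-1})$). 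Conversely, an object of $\mathcal{C}$, once completed to a compact quantum group, yields an object of $\widehat{\textbf{Q}}(A,H,D)$ by running the equivalence (ii) $\implies$ (iii) $\implies$ (i) of Lemma \ref{7.11}.

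Having matched the two categories, I would conclude by transporting the universality statement of Theorem \ref{6.9}: $\mathcal{O}(U_q(N))$ is the universal object in $\mathcal{C}$, hence $U_q(N)$ (its $C^*$-completion, which is a compact quantum group by Theorem \ref{5.7} and the remarks following Definition \ref{5.6}) is the universal object in $\widehat{\textbf{Q}}(A,H,D)$. By Proposition \ref{7.10} this universal object is isomorphic to $S_0$, the quantum group of orientation preserving isometries, which is the desired conclusion. One should also verify that $U_q(N)$ genuinely acts on $S_q^{2N-1}$ in the $C^*$-algebraic sense preserving $\widehat{D}$ and $\textbf{h}$, so that it really is an object of the category; this is supplied by Proposition \ref{6.4}, Lemma \ref{6.3}, and Theorem \ref{6.9}, which together show $\mathcal{O}(U_q(N)) \in \mathcal{C}$, combined with the lifting remarks in Section 5 that promote the algebraic coaction to a $C^*$-action.

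The main obstacle I anticipate is the passage between the purely algebraic framework of Theorem \ref{6.6} and Theorem \ref{6.9} (Hopf $\ast$-algebras, coactions $\rho$) and the analytic $C^*$-framework of Definition \ref{7.7} and Proposition \ref{7.10} (compact quantum groups, unitary representations, multiplier algebras). Every object of $\widehat{\textbf{Q}}(A,H,D)$ is a compact quantum group $S$ acting by a $C^*$-coaction $\alpha$, and one must descend to the dense Hopf $\ast$-algebra $\mathcal{S}$ and the algebraic comodule-algebra structure to apply Theorem \ref{6.6}, then re-promote the resulting $\ast$-morphism $\Psi: \mathcal{O}(U_q(N)) \to \mathcal{S}$ to a $C^*$-morphism compatible with the coproducts and the unitary representations. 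Ensuring that this back-and-forth is functorial — that the morphism $\Psi$ of Theorem \ref{6.6} integrates to a morphism of compact quantum groups intertwining the unitary representations in the sense of Definition \ref{7.7} — is the delicate point, but it is precisely the content already packaged in Proposition \ref{7.10} and the universality of $U_q(N)$ as a $C^*$-algebra (the universal $C^*$-algebra generated by $\mathcal{O}(U_q(N))$, as noted after Definition \ref{5.6}), so the argument should close without genuinely new analytic input.
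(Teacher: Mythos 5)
Your proposal is correct and follows essentially the same route as the paper: use Proposition \ref{7.10} to reduce to the universal object of $\widehat{\textbf{Q}}(A,H,D)$, use Lemma \ref{7.11} to pass to the dense Hopf $\ast$-algebra level where Theorem \ref{6.6}/\ref{6.9} applies, and use the universal $C^*$-property of $U_q(N)$ to lift. The only difference is presentational: where you speak of ``matching the categories and transporting universality,'' the paper (since $\mathcal{C}$ and $\widehat{\textbf{Q}}(A,H,D)$ are not literally equivalent --- objects of $\mathcal{C}$ need not be CQG algebras) explicitly constructs the two morphisms $\Psi : U_q(N) \to S$ and $\Theta : S \to U_q(N)$ and checks both composites are identities, which is the careful version of the step you flag as the delicate point.
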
  

\begin{proof}
By Proposition \ref{7.10}, there exists a universal object $(S,\alpha)$ in $\widehat{\textbf{Q}}(A,H,D)$. By ii) and iii) of Lemma \ref{7.11}, $\alpha$ preserves $\textbf{h}$ and leaves the algebra generated by $\{V_{n,k}; n,k \in \mathbb{N} \cup \{0\}\}$, i.e., $\mathcal{O}(S_q^{2N-1})$ invariant. Moreover, as each $V_{n,k}$ is finite dimensional, $\alpha(\mathcal{O}(S_q^{2N-1})) \subset \mathcal{O}(S_q^{2N-1}) \otimes \mathcal{S}$, where $\mathcal{S}$ is the dense Hopf $\ast$-algebra inside the compact quantum group $S$ mentioned after Definition \ref{4.9}. In particular, by ii) of Lemma \ref{7.11}, the coaction of $\mathcal{S}$ on $\mathcal{O}(S_q^{2N-1})$ satisfies the hypotheses of Theorem \ref{6.6}, hence is an object of the category $\mathcal{C}$ as in Theorem \ref{6.9}. Thus we have a unique morphism $\Psi : \mathcal{O}(U_q(N)) \to \mathcal{S}$ in $\mathcal{C}$. As $U_q(N)$ is the universal $C^*$-algebra corresponding to $\mathcal{O}(U_q(N))$, $\Psi$ extends to a $C^*$-algebra morphism (again denoted by $\Psi$) $U_q(N) \to S$. It is clearly a morphism of compact quantum groups and intertwines the two ($C^*$-algebraic) actions, hence a morphism in $\widehat{\textbf{Q}}(A,H,D)$.\\

On the other hand, it follows from Lemma \ref{7.11} that $U_q(N)$ is an object in the category $\widehat{\textbf{Q}}(A,H,D)$ as the canonical action of the compact quantum group $U_q(N)$ on $S_q^{2N-1}$ satisfies ii) of Lemma \ref{7.11}. Hence we have a unique morphism $\Theta : S \to U_q(N)$ in $\widehat{\textbf{Q}}(A,H,D)$. Clearly, $\Psi \Theta$ is the identity morphism, since $S$ is the universal object. To show that $\Theta \Psi$ is the identity morphism, we observe that it is a morphism of compact quantum groups, hence takes $\mathcal{O}(U_q(N))$ to itself. Since $\mathcal{O}(U_q(N))$ is universal in $\mathcal{C}$, $\Theta \Psi$ is identity, at least on $\mathcal{O}(U_q(N))$. We recall that $U_q(N)$ is the universal $C^*$-algebra generated by $\mathcal{O}(U_q(N))$, hence $\Theta \Psi$ lifts uniquely to $U_q(N)$, implying that $\Theta \Psi$ is also the identity morphism. Thus $S$ is isomorphic to $U_q(N)$.
\end{proof}

\section*{Acknowledgement}

The first author is grateful to Aritra Bhowmick, Jyotishman Bhowmick and Sugato Mukhopadhyay for helpful discussions that led to many improvements of the paper. The second author is partially supported by J.C. Bose National Fellowship and Research Grant awarded by D.S.T. (Govt. of India). Both the authors thank the anonymous referee for several suggestions and corrections leading to improvement of the paper.

\begin{bibdiv}
\begin{biblist}
	
\bib{MR2146039}{article}{
	author={Banica, Teodor},
	title={Quantum automorphism groups of homogeneous graphs},
	journal={J. Funct. Anal.},
	volume={224},
	date={2005},
	number={2},
	pages={243--280},
	issn={0022-1236},
	review={\MR{2146039}},
	doi={10.1016/j.jfa.2004.11.002},
}

\bib{MR2174219}{article}{
	author={Banica, Teodor},
	title={Quantum automorphism groups of small metric spaces},
	journal={Pacific J. Math.},
	volume={219},
	date={2005},
	number={1},
	pages={27--51},
	issn={0030-8730},
	review={\MR{2174219}},
	doi={10.2140/pjm.2005.219.27},
}
	
\bib{MR506890}{article}{
	author={Bergman, George M.},
	title={The diamond lemma for ring theory},
	journal={Adv. in Math.},
	volume={29},
	date={1978},
	number={2},
	pages={178--218},
	issn={0001-8708},
	review={\MR{506890}},
	doi={10.1016/0001-8708(78)90010-5},
}

\bib{MR3275038}{article}{
	author={Bhowmick, Jyotishman},
	author={D'Andrea, Francesco},
	author={Das, Biswarup},
	author={D\polhk abrowski, Ludwik},
	title={Quantum gauge symmetries in noncommutative geometry},
	journal={J. Noncommut. Geom.},
	volume={8},
	date={2014},
	number={2},
	pages={433--471},
	issn={1661-6952},
	review={\MR{3275038}},
	doi={10.4171/JNCG/161},
}

\bib{MR2555012}{article}{
	author={Bhowmick, Jyotishman},
	author={Goswami, Debashish},
	title={Quantum group of orientation-preserving Riemannian isometries},
	journal={J. Funct. Anal.},
	volume={257},
	date={2009},
	number={8},
	pages={2530--2572},
	issn={0022-1236},
	review={\MR{2555012}},
	doi={10.1016/j.jfa.2009.07.006},
}

\bib{MR1937403}{article}{
	author={Bichon, Julien},
	title={Quantum automorphism groups of finite graphs},
	journal={Proc. Amer. Math. Soc.},
	volume={131},
	date={2003},
	number={3},
	pages={665--673},
	issn={0002-9939},
	review={\MR{1937403}},
	doi={10.1090/S0002-9939-02-06798-9},
}

\bib{MR1303779}{book}{
	author={Connes, Alain},
	title={Noncommutative geometry},
	publisher={Academic Press, Inc., San Diego, CA},
	date={1994},
	pages={xiv+661},
	isbn={0-12-185860-X},
	review={\MR{1303779}},
}

\bib{MR1358358}{book}{
	author={Chari, Vyjayanthi},
	author={Pressley, Andrew},
	title={A guide to quantum groups},
	note={Corrected reprint of the 1994 original},
	publisher={Cambridge University Press, Cambridge},
	date={1995},
	pages={xvi+651},
	isbn={0-521-55884-0},
	review={\MR{1358358}},
}

\bib{MR2458039}{article}{
	author={Chakraborty, Partha Sarathi},
	author={Pal, Arupkumar},
	title={Characterization of ${\rm SU}_q(\l+1)$-equivariant spectral
		triples for the odd dimensional quantum spheres},
	journal={J. Reine Angew. Math.},
	volume={623},
	date={2008},
	pages={25--42},
	issn={0075-4102},
	review={\MR{2458039}},
	doi={10.1515/CRELLE.2008.071},
}

\bib{MR1310296}{article}{
	author={Dijkhuizen, Mathijs S.},
	author={Koornwinder, Tom H.},
	title={CQG algebras: a direct algebraic approach to compact quantum
		groups},
	journal={Lett. Math. Phys.},
	volume={32},
	date={1994},
	number={4},
	pages={315--330},
	issn={0377-9017},
	review={\MR{1310296}},
	doi={10.1007/BF00761142},
}

\bib{MR934283}{article}{
	author={Drinfel\cprime d, V. G.},
	title={Quantum groups},
	conference={
		title={Proceedings of the International Congress of Mathematicians,
			Vol. 1, 2},
		address={Berkeley, Calif.},
		date={1986},
	},
	book={
		publisher={Amer. Math. Soc., Providence, RI},
	},
	date={1987},
	pages={798--820},
	review={\MR{934283}},
}

\bib{MR3609348}{article}{
	author={Etingof, Pavel},
	author={Goswami, Debashish},
	author={Mandal, Arnab},
	author={Walton, Chelsea},
	title={Hopf coactions on commutative algebras generated by a
		quadratically independent comodule},
	journal={Comm. Algebra},
	volume={45},
	date={2017},
	number={8},
	pages={3410--3412},
	issn={0092-7872},
	review={\MR{3609348}},
	doi={10.1080/00927872.2016.1236934},
}

\bib{MR3130334}{article}{
	author={Etingof, Pavel},
	author={Walton, Chelsea},
	title={Semisimple Hopf actions on commutative domains},
	journal={Adv. Math.},
	volume={251},
	date={2014},
	pages={47--61},
	issn={0001-8708},
	review={\MR{3130334}},
	doi={10.1016/j.aim.2013.10.008},
}

\bib{MR3582020}{article}{
	author={Etingof, Pavel},
	author={Walton, Chelsea},
	title={Finite dimensional Hopf actions on algebraic quantizations},
	journal={Algebra Number Theory},
	volume={10},
	date={2016},
	number={10},
	pages={2287--2310},
	issn={1937-0652},
	review={\MR{3582020}},
	doi={10.2140/ant.2016.10.2287},
}

\bib{MR3611308}{article}{
	author={Etingof, Pavel},
	author={Walton, Chelsea},
	title={Finite dimensional Hopf actions on deformation quantizations},
	journal={Proc. Amer. Math. Soc.},
	volume={145},
	date={2017},
	number={5},
	pages={1917--1925},
	issn={0002-9939},
	review={\MR{3611308}},
	doi={10.1090/proc/13356},
}

\bib{MR3559897}{book}{
	author={Goswami, Debashish},
	author={Bhowmick, Jyotishman},
	title={Quantum isometry groups},
	series={Infosys Science Foundation Series},
	note={Infosys Science Foundation Series in Mathematical Sciences},
	publisher={Springer, New Delhi},
	date={2016},
	pages={xxviii+235},
	isbn={978-81-322-3665-8},
	isbn={978-81-322-3667-2},
	review={\MR{3559897}},
	doi={10.1007/978-81-322-3667-2},
}

\bib{MR3261868}{article}{
	author={Goswami, Debashish},
	author={Joardar, Soumalya},
	title={Quantum isometry groups of noncommutative manifolds obtained by
		deformation using dual unitary 2-cocycles},
	journal={SIGMA Symmetry Integrability Geom. Methods Appl.},
	volume={10},
	date={2014},
	pages={Paper 076, 18},
	issn={1815-0659},
	review={\MR{3261868}},
	doi={10.3842/SIGMA.2014.076},
}

\bib{MR797001}{article}{
	author={Jimbo, Michio},
	title={A $q$-difference analogue of $U({\germ g})$ and the Yang-Baxter
		equation},
	journal={Lett. Math. Phys.},
	volume={10},
	date={1985},
	number={1},
	pages={63--69},
	issn={0377-9017},
	review={\MR{797001}},
	doi={10.1007/BF00704588},
}

\bib{MR841713}{article}{
	author={Jimbo, Michio},
	title={A $q$-analogue of $U({\germ g}{\germ l}(N+1))$, Hecke algebra, and
		the Yang-Baxter equation},
	journal={Lett. Math. Phys.},
	volume={11},
	date={1986},
	number={3},
	pages={247--252},
	issn={0377-9017},
	review={\MR{841713}},
	doi={10.1007/BF00400222},
}
	
\bib{MR1492989}{book}{
	author={Klimyk, Anatoli},
	author={Schm\"udgen, Konrad},
	title={Quantum groups and their representations},
	series={Texts and Monographs in Physics},
	publisher={Springer-Verlag, Berlin},
	date={1997},
	pages={xx+552},
	isbn={3-540-63452-5},
	review={\MR{1492989}},
	doi={10.1007/978-3-642-60896-4},
}

\bib{MR1614943}{book}{
	author={Korogodski, Leonid I.},
	author={Soibelman, Yan S.},
	title={Algebras of functions on quantum groups. Part I},
	series={Mathematical Surveys and Monographs},
	volume={56},
	publisher={American Mathematical Society, Providence, RI},
	date={1998},
	pages={x+150},
	isbn={0-8218-0336-0},
	review={\MR{1614943}},
	doi={10.1090/surv/056},
}

\bib{MR1482228}{book}{
	author={Landi, Giovanni},
	title={An introduction to noncommutative spaces and their geometries},
	series={Lecture Notes in Physics. New Series m: Monographs},
	volume={51},
	publisher={Springer-Verlag, Berlin},
	date={1997},
	pages={xii+200},
	isbn={3-540-63509-2},
	review={\MR{1482228}},
}

\bib{MR1016381}{book}{
	author={Manin, Yu. I.},
	title={Quantum groups and noncommutative geometry},
	publisher={Universit\'e de Montr\'eal, Centre de Recherches Math\'ematiques,
		Montreal, QC},
	date={1988},
	pages={vi+91},
	isbn={2-921120-00-3},
	review={\MR{1016381}},
}

\bib{MR1645264}{article}{
	author={Maes, Ann},
	author={Van Daele, Alfons},
	title={Notes on compact quantum groups},
	journal={Nieuw Arch. Wisk. (4)},
	volume={16},
	date={1998},
	number={1-2},
	pages={73--112},
	issn={0028-9825},
	review={\MR{1645264}},
}

\bib{MR919322}{article}{
	author={Podle\'{s}, P.},
	title={Quantum spheres},
	journal={Lett. Math. Phys.},
	volume={14},
	date={1987},
	number={3},
	pages={193--202},
	issn={0377-9017},
	review={\MR{919322}},
	doi={10.1007/BF00416848},
}

\bib{MR1015339}{article}{
	author={Reshetikhin, N. Yu.},
	author={Takhtadzhyan, L. A.},
	author={Faddeev, L. D.},
	title={Quantization of Lie groups and Lie algebras},
	language={Russian},
	journal={Algebra i Analiz},
	volume={1},
	date={1989},
	number={1},
	pages={178--206},
	issn={0234-0852},
	translation={
		journal={Leningrad Math. J.},
		volume={1},
		date={1990},
		number={1},
		pages={193--225},
		issn={1048-9924},
	},
	review={\MR{1015339}},
}

\bib{MR1382726}{article}{
	author={Van Daele, Alfons},
	author={Wang, Shuzhou},
	title={Universal quantum groups},
	journal={Internat. J. Math.},
	volume={7},
	date={1996},
	number={2},
	pages={255--263},
	issn={0129-167X},
	review={\MR{1382726}},
	doi={10.1142/S0129167X96000153},
}

\bib{MR1086447}{article}{
	author={Vaksman, L. L.},
	author={So\u\i bel\cprime man, Ya. S.},
	title={Algebra of functions on the quantum group ${\rm SU}(n+1),$ and
		odd-dimensional quantum spheres},
	language={Russian},
	journal={Algebra i Analiz},
	volume={2},
	date={1990},
	number={5},
	pages={101--120},
	issn={0234-0852},
	translation={
		journal={Leningrad Math. J.},
		volume={2},
		date={1991},
		number={5},
		pages={1023--1042},
		issn={1048-9924},
	},
	review={\MR{1086447}},
}

\bib{MR1637425}{article}{
	author={Wang, Shuzhou},
	title={Quantum symmetry groups of finite spaces},
	journal={Comm. Math. Phys.},
	volume={195},
	date={1998},
	number={1},
	pages={195--211},
	issn={0010-3616},
	review={\MR{1637425}},
	doi={10.1007/s002200050385},
}

\bib{MR1682006}{article}{
	author={Welk, Martin},
	title={Covariant differential calculus on quantum spheres of odd
		dimension},
	note={Quantum groups and integrable systems (Prague, 1998)},
	journal={Czechoslovak J. Phys.},
	volume={48},
	date={1998},
	number={11},
	pages={1507--1514},
	issn={0011-4626},
	review={\MR{1682006}},
	doi={10.1023/A:1021642214226},
}

\bib{MR901157}{article}{
	author={Woronowicz, S. L.},
	title={Compact matrix pseudogroups},
	journal={Comm. Math. Phys.},
	volume={111},
	date={1987},
	number={4},
	pages={613--665},
	issn={0010-3616},
	review={\MR{901157}},
}

\bib{MR943923}{article}{
	author={Woronowicz, S. L.},
	title={Tannaka-Kre\u\i n duality for compact matrix pseudogroups. Twisted
		${\rm SU}(N)$ groups},
	journal={Invent. Math.},
	volume={93},
	date={1988},
	number={1},
	pages={35--76},
	issn={0020-9910},
	review={\MR{943923}},
	doi={10.1007/BF01393687},
}

\bib{MR1616348}{article}{
	author={Woronowicz, S. L.},
	title={Compact quantum groups},
	conference={
		title={Sym\'etries quantiques},
		address={Les Houches},
		date={1995},
	},
	book={
		publisher={North-Holland, Amsterdam},
	},
	date={1998},
	pages={845--884},
	review={\MR{1616348}},
}

\end{biblist}
\end{bibdiv}

\end{document}